\newcounter{countitems}
\newcounter{nextitemizecount}
\newcommand{\setupcountitems}{%
  \stepcounter{nextitemizecount}%
  \setcounter{countitems}{0}%
  \preto\item{\stepcounter{countitems}}%
}
\newcommand{\computecountitems}{%
  \edef\@currentlabel{\number\c@countitems}%
  \label{countitems@\number\numexpr\value{nextitemizecount}-1\relax}%
}
\newcommand{\nextitemizecount}{%
  \getrefnumber{countitems@\number\c@nextitemizecount}%
}
\newcommand{\previtemizecount}{%
  \getrefnumber{countitems@\number\numexpr\value{nextitemizecount}-1\relax}%
}
\newenvironment{AutoMultiColEnumerate}{%
\ifnumcomp{\nextitemizecount}{>}{3}{\begin{multicols}{2}}{}%
\setupcountitems\begin{enumerate}}%
{\end{enumerate}%
\unskip\computecountitems\ifnumcomp{\previtemizecount}{>}{3}{\end{multicols}}{}}
\def\cite{\citet}
\numberwithin{equation}{section}
\def\@noindentfalse{\global\let\if@noindent\iffalse}
\def\@noindenttrue {\global\let\if@noindent\iftrue}
\def\@aftertheorem{%
  \@noindenttrue
  \everypar{%
    \if@noindent%
      \@noindentfalse\clubpenalty\@M\setbox\z@\lastbox%
    \else%
      \clubpenalty \@clubpenalty\everypar{}%
    \fi}}
\theoremstyle{plain}
\newtheorem{theorem}{Theorem}[section]
\newtheorem{lemma}[theorem]{Lemma}
\theoremstyle{definition}
\newtheorem{remark}[theorem]{Remark}
\newtheorem{assumption}[theorem]{Assumption}
\newtheorem{proposition}[theorem]{Proposition}
\bf\mathversion{bold}}{\thesubsection\kern1em}{0pt}{}
\bf\mathversion{bold}}{}{0pt}{}
\def\be#1{\begin{equation*}#1\end{equation*}}
\def\ben#1{\begin{equation}#1\end{equation}}
\def\bes#1{\begin{equation*}\begin{split}#1\end{split}\end{equation*}}
\def\besn#1{\begin{equation}\begin{split}#1\end{split}\end{equation}}
\def\ba#1{\begin{align*}#1\end{align*}}
\def\bg#1{\begin{gather*}#1\end{gather*}}
\def\bgn#1{\begin{gather}#1\end{gather}}
\let\@@todo\todo
\def\todo#1{\@@todo[color=red,backgroundcolor=red!10,size=\tiny]{#1}}
\def\given{\mskip 0.5mu plus 0.25mu\vert\mskip 0.5mu plus 0.15mu}
\newcounter{bracketlevel}%
\def\@bracketfactory#1#2#3#4#5#6{%
\expandafter\def\csname#1\endcsname##1{%
\global\advance\c@bracketlevel 1\relax%
\global\expandafter\let\csname @middummy\alph{bracketlevel}\endcsname\given%
\global\def\given{\mskip#5\csname#4\endcsname\vert\mskip#6}\csname#4l\endcsname#2##1\csname#4r\endcsname#3%
\global\expandafter\let\expandafter\given\csname @middummy\alph{bracketlevel}\endcsname%
\global\advance\c@bracketlevel -1\relax%
}%
}
\def\bracketfactory#1#2#3{%
\@bracketfactory{#1}{#2}{#3}{relax}{0.5mu plus 0.25mu}{0.5mu plus 0.15mu}
\@bracketfactory{b#1}{#2}{#3}{big}{1mu plus 0.25mu minus 0.25mu}{0.6mu plus 0.15mu minus 0.15mu}
\@bracketfactory{bb#1}{#2}{#3}{Big}{2.4mu plus 0.8mu minus 0.8mu}{1.8mu plus 0.6mu minus 0.6mu}
\@bracketfactory{bbb#1}{#2}{#3}{bigg}{3.2mu plus 1mu minus 1mu}{2.4mu plus 0.75mu minus 0.75mu}
\@bracketfactory{bbbb#1}{#2}{#3}{Bigg}{4mu plus 1mu minus 1mu}{3mu plus 0.75mu minus 0.75mu}
}
\newcounter{ctr}\loop\stepcounter{ctr}\edef\X{\@Alph\c@ctr}%
\edef\csname s\X\endcsname{\noexpand\mathscr{\X}}
\edef\csname c\X\endcsname{\noexpand\mathcal{\X}}
\edef\csname b\X\endcsname{\noexpand\boldsymbol{\X}}
\edef\csname I\X\endcsname{\noexpand\mathbb{\X}}
\loop\stepcounter{ctr}\edef\X{\@alph\c@ctr}%
\edef\csname bs\X\endcsname{\noexpand\boldsymbol{\X}}
\def\bsl{l}
\def\bsi{i}
\def\bss{s}
\def\bsr{r}
\let\@IE\IE\let\IE\undefined
\newcommand{\IE}{\mathop{{}\@IE}\mathopen{}}
\let\@IP\IP\let\IP\undefined
\newcommand{\IP}{\mathop{{}\@IP}}
\newcommand{\Exp}{\mathop{\mathrm{Exp}}}
\newcommand{\Be}{\mathop{\mathrm{Be}}}
\newcommand{\Po}{\mathop{\mathrm{Po}}}
\newcommand{\Bi}{\mathop{\mathrm{Bi}}}
\newcommand{\Var}{\mathop{\mathrm{Var}}}
\newcommand{\bigo}{\mathop{{}\mathrm{O}}\mathopen{}}
\newcommand{\lito}{\mathop{{}\mathrm{o}}\mathopen{}}
\newcommand{\law}{\mathop{{}\sL}\mathopen{}}
\newcommand{\mel}{\MoveEqLeft}
\newcommand{\RomanNumeral}[1]{\lowercase\expandafter{\romannumeral#1}}
\def\^#1{\relax\ifmmode {\mathaccent"705E #1} \else {\accent94 #1}\fi}
\def\~#1{\relax\ifmmode {\mathaccent"707E #1} \else {\accent"7E #1}\fi}
\def\*#1{\relax#1^\ast}
\edef\-#1{\relax\noexpand\ifmmode {\noexpand\bar{#1}} \noexpand\else \-#1\noexpand\fi}
\def\>#1{\vec{#1}}
\def\.#1{\dot{#1}}
\def\atop{\@@atop}
\def\ER{Erd\H{o}s-R\'enyi}
\renewcommand{\leq}{\leqslant}
\renewcommand{\geq}{\geqslant}
\renewcommand{\phi}{\varphi}
\newcommand{\eq}{\eqref}
\newcommand{\I}{\mathop{{}\mathrm{I}}\mathopen{}}
\newcommand{\dtv}{\mathop{d_{\mathrm{TV}}}\mathopen{}}
\newcommand\indep{\protect\mathpalette{\protect\@indep}{\perp}}
\def\@indep#1#2{\mathrel{\rlap{$#1#2$}\mkern2mu{#1#2}}}
\newcommand{\trans}{{\top}}
\newcommand{\F}[1]{\mathfrak{#1}}
\newcommand{\C}[1]{\mathcal{#1}}
\def\parsetime#1#2#3#4#5#6{#1#2:#3#4}
\def\parsedate#1:20#2#3#4#5#6#7#8+#9\empty{20#2#3-#4#5-#6#7 \parsetime #8}
\def\moddate{\expandafter\parsedate\pdffilemoddate{\jobname.tex}\empty}
\begin{document}

\title{\sc\bf\large\MakeUppercase{The SIR epidemic on \\a dynamic Erd\H{o}s-R\'enyi random graph}}
\author{\sc Yuanfei Huang \and \sc Adrian R\"ollin}

\date{\itshape National University of Singapore}

\maketitle
\abstract{\noindent
We investigate the SIR epidemic on a dynamic inhomogeneous Erd\H{o}s-R\'enyi random graph, in which vertices are of one of $k$ types and in which edges appear and disappear independently of each other. We establish a functional law of large numbers for the susceptible, infected, and recovered ratio curves after a random time shift, and demonstrate that, under a variety of possible scaling limits of the model parameters, the epidemic curves are solutions to a system of ordinary differential equations. In most scaling regimes, these equations coincide with the classical SIR epidemic equations. In the regime where the average degree of the network remains constant and the edge-flipping dynamics remain on the same time scale as the infectious contact process, however, a novel set of differential equations emerges. This system contains  additional quantities related to the infectious edges, but somewhat surprisingly, contains no quantities related to higher-order local network configurations. To the best of our knowledge, this study represents the first thorough and rigorous analysis of large population epidemic processes on dynamic random graphs, although our findings are contingent upon conditioning on a (possibly strict) subset of the event of an epidemic outbreak. }

\medskip

\noindent\textbf{Keywords:} Markovian and non-Markovian epidemic models; dynamic random graphs; epidemic curves; deterministic approximation; functional law of large number

\tableofcontents

\section{Introduction}

Rigorous mathematical modelling of epidemics began with \cite{Kermack1927}, who employed ordinary differential equations to capture the evolution of infectious diseases in large populations. While individual infection events are inherently probabilistic, the deterministic models proposed by \cite{Kermack1927} effectively characterize the disease's progression once an epidemic is well underway. In contrast, the exploration of stochastic effects, which are especially important during the onset of an epidemic, can be attributed to \cite{bartlett1949some}. As is well known, these two modelling approaches are intrinsically related. Indeed, as population sizes grow large, stochastic models tend to converge to their deterministic counterparts; hence, both provide complementary perspectives on the same epidemiological phenomena.

As interest in utilizing networks to shed light on social phenomena has grown, pioneered by works like that of \cite{Holland1976,Holland1977} (see also \cite{Leinhardt1977}), so too has the interest in exploring epidemiological processes within these network structures. The goal is to develop models that better reflect real-world scenarios and deepen our understanding of the progression of actual epidemics. Given the extensive body of literature in this area, we recommend \cite{Keeling2005} for an exhaustive review of seminal contributions up to the early 2000s.

Linking epidemiological models on networks with appropriate deterministic analogues has proven challenging, especially when seeking mathematically rigorous results. Typically, mean-field reasoning falls short when applied to network structures, unless the connections exhibit some degree of uniform mixing of vertex connectivity, as is the case in the \ER\ random graph model and the configuration model. For instance, \cite{Volz2007} proposed a system of differential equations for the SIR epidemic on a dynamic configuration model using heuristic methods, but without mathematical guarantees.

This article's objective is to derive rigorous large-population convergence results for the SIR epidemic process within a dynamic random graph model. While both the epidemic and the random graph dynamics come in rather elementary forms, the fact that they interact makes the analysis of the epidemic more difficult. Moreover, one has to keep in mind that the SIR epidemic may be overly simplistic, but it serves as a reference against which to compare more complicated models. Similarly, our model should be regarded as a benchmark against which to compare more intricate models, since it is challenging to appreciate the effects  more elaborate models exhibit without a clear understanding of the phenomena appearing in their simpler counterparts.

As our dynamic network model, we use an \ER\ random graph (or more specifically, its general version, the Stochastic Block Model), wherein edges  appear and disappear at constant rates, independently of each other. This model has the advantage of being Markovian and of having the \ER\ random graph itself as equilibrium distribution. Hence, in its simplest form, our model contains two epidemiological parameters --- an infection rate~$\beta$ and a recovery rate~$\gamma$ --- as well as two parameters governing the graph dynamics --- the rate~$\lambda$ at which an edge forms between two vertices, and the rate~$\mu$ at which an existing edge disappears. We can fix any of these parameters and change the others without loss of generality; hence, in this article, we treat $\gamma$ as constant. The other three parameters may depend on the population size $n$, which offers a broad spectrum of limiting regimes. Interestingly, for most of these regimes, the limiting ordinary differential equation (ODE) mirrors that of the Markovian SIR epidemic process. Yet, in the most interesting regime, where the average degree remains constant and edges form and vanish at a rate similar to infection transmission, a more complex limit emerges. The corresponding system of ordinary differential equations is new to the best of our knowledge.

\subsection{Markovian SIR model}

We first recall and discuss the basic Markovian SIR epidemic process, which is the most widely studied stochastic epidemic model and takes place in a homogenously mixing population. In this model, an infected individual makes infectious contacts with any other given individual at rate~$\beta/n$ within a population of $n$ individuals. If a susceptible individual is contacted by an infected individual, it immediately becomes infected and starts in turn to infect other individuals. The individual recovers at rate~$\gamma$, after which it no longer contributes to the spread of the disease. 

We denote by $S(t)$, $I(t)$, and $R(t)$ the respective number of susceptible, infected, and recovered individuals at time $t$. Under appropriate assumptions, the ratios $S/n$, $I/n$ and $R/n$ can be related to corresponding functions $s(t)$, $i(t)$ and $r(t)$ that satisfy the well-known system of ordinary differential equations (ODEs)
\besn{\label{deterministicSIR}
s'(t) &=-\gamma \bR_0 i(t)s(t),\\
i'(t) &=\gamma \bR_0 i(t)s(t)-\gamma i(t),\\
r'(t) &=\gamma i(t),
}
along with appropriate initial conditions, where $\bR_0=\beta/\gamma$ is the so-called \emph{reproduction ratio}. For instance, for any bounded time interval, the stochastic system converges to the deterministic system \eqref{deterministicSIR} as $n\rightarrow\infty$; see \cite{armbruster2017elementary} for a modern proof, and see, for example, \cite{britton2020stochastic} for a comprehensive overview of the current literature. 

The Markovian SIR epidemic process assumes Poissonian contacts and homogeneous mixing; the latter can be thought of as a contact network given by a static complete graph. That is, on this network, each infected individual contacts their neighbours according to a homogeneous Poisson point process with mean $\beta/n$ per time unit, and all individuals are connected with unchanged edges during an outbreak, with no edges dissolving. Real-world social interactions typically both exhibit non-Markovian properties and vary over time, with connections forming and dissolving, but the mathematical analysis of large population limits of more involved stochastic models by means of simple ODEs like \eqref{deterministicSIR} has not seen much progress. 

Now, in order to analyse and understand diseases with more complicated disease progressions and in non-homogenous populations, epidemiological modellers prefer to directly modify the system \eq{deterministicSIR}  without having to invoke any underlying stochastic model. Alternatively, while stochastic epidemiological processes on static and dynamic networks have been intensively studied, quite often such models are analysed with the help of the \emph{moment-closure method}, which is a mean-field argument and which allows to derive an \emph{approximating} large-population limit by means of a finite system of differential equations; see for example \cite{kiss2017mathematics}. This method, however, comes with no mathematical guarantees regarding validity and accuracy of the approximation. 

In some cases, such as for dynamic sexual partnership networks, the dynamics of the network are restrictive enough to allow the derivation of an appropriate compartmental model; see for example \cite{Kretzschmar1994} and \cite{Althaus2012}, or \cite{diekmann2000mathematical} in more general settings. In these cases, the differential equations can be justified heuristically to some degree, but again no mathematical guarantees and approximation errors in the large population limit are typically derived. In other instances, finite systems of ODEs can be derived, involving the PGF of the degree distributions; see \cite{Volz2007,Volz2008a,Volz2009}. Moreover, many mathematical aspects of the SIR epidemic on random, but non-dynamic random graph, have been derived; see \cite{Durrett2007} for a comprehensive discussion on such and related models. 

Some results to understand epidemics on dynamic random graphs were proved by \cite{Durrett2022}. However, the graph dynamics considered there (the `evoSIR' model) are involved enough to make large population limits difficult to derive, and the mathematical rigorous analysis is confined to quantities such as the final size of the epidemic, likelihood of an epidemic, and critical values for an epidemic. This is in parts due to the fact that there is a feedback between the network and contact process dynamics --- making the model interesting, but also difficult to analyse.

The theory of approximating pure jump Markov processes by limiting ODEs goes back to \cite{Kurtz1970}, but the theory relies on the Markovian property of the macroscopic quantities of interest, which SIR-type models on networks typically do not possess. While such models can  be made Markovian by incorporating macroscopic quantities that count the number of local configurations, such as S-I edges, S-I-S paths, etc., this leads to a non-closed system of infinitely many equations (hence the need for moment closures) with intractable dependencies between the equations.

In this article, we follow the approach of \cite{barbour2013approximating}, who are able to relate the epidemic curve to properties of certain branching processes. This approach allows the derivation of limiting integral equations of the quantities of interest along with mathematically rigorous large population limit results. The integral equations can sometimes be expressed as differential equations, and --- surprisingly --- this turns out the be case for the models under consideration in this article. 

\subsection{Brief outline of main results for single-type case}

As a preliminary result, we first analyse a general epidemic process on a homogeneously mixing population, or equivalently, on a complete graph that remains unchanged over time. The contact processes between infected and susceptible individuals need not be Poisson and the recovery times of infected individual need not be exponential. Recently, \cite{pang2022functional} derived functional laws of large numbers and functional central limit theorems for non-Markovian epidemic models. In these models, general distributions are allowed for the durations of the respective disease states, while the contact processes remain Poissonian. Moreover, the epidemic processes studied by \cite{pang2022functional} are single-type and begin with a fraction of the total population being infectious, without accounting for the randomness early in the epidemic. For general epidemic processes that start with a single infected individual and involve general point processes for the contact interactions between infected and susceptible individuals, \cite{barbour2013approximating} proved the functional law of large numbers for the susceptible ratio curves after a specific random time shift. The results of \cite{barbour2013approximating} are not directly applicable to our model due to a technical restriction in their main theorem, hence we need to first remove this technical restriction. With this, we are able to handle more general situations than \cite{pang2022functional}, albeit with different (most likely unnecessary) technical restrictions. Specifically, in the single type case, after a random time shift, the susceptible, infected, and recovery ratios converge (conditional on the event that there is an epidemic) to certain functions $s$, $i$, and $r$, respectively, as $n\to\infty$. The function $s$ has the form of the Laplace transform of a random variable $\widehat{W}$ (which depends on the contact processes), given by
\be{
  s(u)=\IE \exp\bclc{-\widehat{W}e^{\widehat{\C{M}}u}},
}
where $\widehat{\C{M}}$ is a constant that will be introduced in detail later. Denote by $Q$ a random variable modelling the time elapsed from an individual getting infected to recovery, with cumulative distribution function $F_Q$ and $F_Q^c=1-F_Q$. The limiting infected and recovery ratios can then be expressed as
\bes{
          i(t)&= 1-s(t)-\int_{-\infty}^{t}(1-s(v))dF^{c}_{Q}\clr{t-v},\\
          r(t)&=-\int_{-\infty}^{t}(1-s(v))dF_Q\clr{t-v}.
}
We then apply the results to a multi-type Markovian SIR process on a dynamic Erd\H{o}-R\'enyi random graph on $n$ vertices. In the single-type version of the model, the graph structure changes according to a 2-state Markov process: existing edges each dissolve at rate~$\mu_n$, and pairs of disconnected vertices each get connected at rate~$\lambda_n$, both independently of the vertices' infection statuses. Note that the equilibrium distribution of this network process is the Erd\H{o}s-R\'enyi graph on $n$ vertices  with connection probability $\lambda_n/(\lambda_n+\mu_n)$. Assuming the network process is in equilibrium state, a randomly selected individual becomes infectious. Any infectious vertex infects each of its connected neighbours at rate~$\beta_{n}$, and recovers (and becomes immune) at rate~$\gamma$. 

In order to analyse the various possible scaling limits, we assume that $\gamma$ remains fixed throughout, but allow $\mu_n$, $\lambda_n$ and $\beta_n$ to vary by letting 
\be{
  \beta_{n}=\beta n^{\kappa_{\beta}},
  \qquad
  \mu_n=\mu n^{\kappa_{\mu}}, 
  \qquad
  \lambda_n=\lambda n^{\kappa_{\lambda}}.
}
Here, $\kappa_{\beta}$, $\kappa_{\mu}$ and $\kappa_{\lambda}$ are fixed real numbers, and $\mu$, $\lambda$ and $\beta$ are fixed positive constants.\footnote{Note the slight abuse of notation here: In the expressions `$\kappa_{\beta}$', `$\kappa_{\mu}$' and `$\kappa_{\lambda}$' the variables $\beta$, $\mu$ and $\lambda$ serve as labels to differentiate the three constants, not as arguments of a function $\kappa_x$, $x\in\IR$. Hence, for instance, $\mu=\lambda$ does not imply $\kappa_\mu = \kappa_\lambda$.} While one may choose $\gamma=1$ without loss of generality, we will leave it in our equations explicit in order to keep the results as comprehensive as possible.

In this article we show that the ratios of infected and recovered individuals after a random time shift and conditional on a sequence of events of positive probabilities converge (in conditional probability) to $s$, $i$, and $r$ as $n\to\infty$; the different scaling limits and parameters arising in the limiting system are provided in Table~\ref{singletable}. Unfortunately, our proofs do not allow us to deduce that the epidemic curves satisfy these limits conditional on an epidemic outbreak in full generality, but only on a positive-probability subset of the event of an outbreak. We believe this is merely a restriction of our techniques and it seems unlikely that the epidemic curves behave differently outside this event, and so it remains an open problem to prove our results in full generality.

The limiting functions satisfy specific ordinary differential equations, which can be classified into two types. The first type applies to all cases in Table~\ref{singletable} except for case (6b), and corresponds to the equation \eqref{deterministicSIR} with a possibly different reproduction ratio depending on $\lambda$, $\mu$ and $\beta$. The second type, Case~(6b) in Table~\ref{singletable}, occurs when $\kappa_{\mu}=0$, $\kappa_{\lambda}=-1$, and $\kappa_{\beta}=0$. This is arguably the most interesting case, in which the average degree is of order 1 and in which the edges change at the same speed as the epidemic progresses. We  show that as $n$ grows large, there is sequence of events with probabilities converging to a positive probability such that the epidemic ratios satisfy the system of ODEs
\besn{\label{singleSLLR}
  \bss'(t) &=-\beta \bsl(t)s(t) ,\\
  \bsi'(t)&=\beta \bsl(t)s(t)-\gamma \bsi(t) ,\\
  \bsr'(t) &=\gamma \bsi(t),
}
subject to the constraint $s(t)+i(t)+r(t)=1$ and with the additional quantity $\bsl(t)$ satisfying the ODE
\ben{\label{singleSLLR2}
  \bsl'(t) = \frac{\beta\lambda}{\mu} \bsl(t)s(t) + \lambda i(t) - (\beta+\mu+\gamma)\bsl(t).
}

The system of ODEs given by \eq{singleSLLR} and \eq{singleSLLR2} have an interesting interpretation. While the quantities $\bss$, $\bsi$, and $\bsr$ follow the dynamics of a regular SIR epidemic, the \emph{force of infection} is moderated by the new quantity~$\bsl$. This quantity can be understood as the average number of active edges emanating from an infected individual, and each such edge has a probability of $\bss(t)$ of being connected to a susceptible individual through which the infection can spread at rate~$\beta$. This results in the appearance of the term $\beta \bsl(t)s(t)$ in the system \eqref{singleSLLR}. 

Now, note that if $n$ is large, the degree distribution of a typical vertex is~$\Po(\lambda/\mu)$. As is well-known, in models where edges ``mix well'', a newly infected vertex typically exhibits a size-biased degree distribution. Since the size-bias transform of a Poisson distribution is the same Poisson distribution shifted by 1 (due to the one edge through which the infection was contracted), upon infection, an individual has right away, on average, $\lambda/\mu$ active edges through which the infection can be further propagated. This yields the first term on the right hand side of \eq{singleSLLR2} since the overall rate of infection is~$\beta \bsl(t)s(t)$. However, as soon as a vertex is infected, additional edges start to appear due to the changing nature of the network, and this happens at rate~$\lambda$ for each infected vertex, which is captured by the second term on the right hand side of \eq{singleSLLR2}. Finally, each active edge has three ways of ceasing to participate in the spread of the infection: (1) an infection is transmitted through an edge (rate~$\beta$), (2) an edge disappears (rate~$\mu$), or (3) the corresponding vertex recovers (rate~$\gamma$). This yields the final term in \eq{singleSLLR2}.

It turns out that in order to handle the general case with multiple groups and parameter regimes, it is beneficial to split the active edges into two groups, $\bsl(t)=\bsl_{\mathrm{c}}(t)+\bsl_{\mathrm{d}}(t)$, where $\bsl_{\mathrm{c}}(t)$ denotes the number of those active edges already present at the time of infection, and where $\bsl_{\mathrm{d}}(t)$ denotes the number of active edges that only appeared after the time of infection. These two quantities are governed by the ODEs
\besn{\label{singleSLLR3}
  \bsl_{\mathrm{c}}'(t)
  &=\frac{\lambda}{\mu}\beta \clr{\bsl_{\mathrm{c}}(t)+ \bsl_{\mathrm{d}}(t)}s(t)  - (\mu +\beta +\gamma ) \bsl_{\mathrm{c}}(t),\\
  \bsl_{\mathrm{d}}'(t)&=\lambda\bsi(t)-(\mu +\beta +\gamma ) \bsl_{\mathrm{d}}(t),
}
satisfying the constraint
\ben{\label{singleSLLR4}
  \frac{\lambda}{\mu}\bsi(t) = \bsl_{\mathrm{c}}(t)+\bsl_{\mathrm{d}}(t)+\frac{\beta}{\mu}\bsl_{\mathrm{d}}(t).
}
Clearly, the sum of the two equations in \eq{singleSLLR3} yields \eq{singleSLLR2}. For the constraint~\eq{singleSLLR4}, we refer to Section~\ref{constraintonsimplecase}, but it says in essence that the edges emanating from an infected individual (apart from the one through which the individual was infected), comprises active edges present \emph{at time} of infection, active edges that appeared \emph{after} time of infection, and edges \emph{no longer} participating in the spread of the infection, the number of which happens to equal $\beta\bsl_{\mathrm{d}}(t)/\mu $.

\subsection{Basic notations}
Throughout this article, $\IN$ denotes the set of natural numbers, and $\IR$ ($\IR_+$, $\IR_-$) denotes the set of all (non-negative, negative) real numbers. For $k\in\IN$, $\IR^k$ is the Euclidean space with dimension $k$. For $\IR^k$, denoted by $\F{B}(\IR^k)$ the Borel $\sigma$-field induced by the Euclidean norm. For $k\in\IN$, we use the notation $[k]=\clc{1,2,\dots,k}$. For any $a\in \IR$, $\floor{a}=\max\{b\in\mathbb{N}\given b\leq a\}$ and  $\ceil{a}=\min\{b\in\mathbb{N}\given b\geq a\}$. For $a$, $b\in\IR$, let $a\wedge b=\min\{a, b\}$. Let $D[a,b]$ ($a,b\in\IR, a<b$) denote the space of $\IR$–valued c\`{a}dl\`{a}g functions defined on $[a,b]$, and convergence in $D[a,b]$ means convergence in the Skorohod $J_1$ topology, and we denote the associated metric by $d^\circ_{ab}:~D[a,b]\times D[a,b]\rightarrow \IR$, see \cite[Chapter 3]{billingsley1999convergence}. We use $\I\cls{\{\cdot\}}$ for indicator function and let $\delta_{x}(A)=\I\cls{A}(x)$ $(A\in\F{B}(\IR))$ be the Dirac measure with mass 1 at $x$. An Erd\H{o}s-R\'enyi random graph is denoted by $\C{G}(n,p)$, where $n\in\mathbb{N}$ is the number of the vertices/individuals, and $p$  the connection probability. For two families of positive random variables $\clc{X_n}$ and $\clc{Y_n}$, $X_n\sim Y_n$ means their quotient convergence to 1 in probability. If $X$ is a random variable or point process, we use $\law(X)$ to denote its law and we denote its cumulative distribution function by $F_X$; we let $F_X^c=1-F_X$. The total variation distance between $\law (X_1)$ and $\law (X_2)$ is denoted by $\dtv(\law (X_1),\law (X_2))$. We use $\|\cdot\|_{\mathrm{TV}}$ to denote the total variation distance of a signed measure, which is twice $\dtv$ when applied to the difference between two probability measures. To say that an event occurs with high probability means that its probability approaches 1 as $n\to\infty$.

\section{General epidemic model and its large population limits}\label{models}

In this section, we prove convergence results for a general stochastic epidemic process among $n$ individuals which can be separated into $k$ types and an individual is of exactly one type. Moreover, each individual is in exactly one of the three states \emph{susceptible}, \emph{infected} and \emph{removed}. While infected, an individual is also infectious and transmits the infection to still susceptible individuals in the population. Let $\cV_{n,i}$ denote the set of vertices of type~$i$, and let $n_i=|\cV_{n,i}|$ denote the number of vertices of type~$i$. Throughout this article, we make the following assumption.
\begin{assumption}\label{initial}
Assume that there exist positive constants $p_i$'s such that 
$n_i/n =p_i + \bigo(1/n)$
for all $i\in[k]$ and $n\in\IN$. 
\end{assumption}

We assume that the first infectious individual is of type~$\iota \in[k]$. Let $\cS^{\iota}_{n,i}(t)$ denote the susceptible individuals of type~$i$ at time $t$, and let 
\be{
  \cS^{\iota}_{n}(t)= \bclr{\cS^{\iota}_{n,1}(t),\dots,\cS^{\iota}_{n,k}(t)}.
}
Let $S_{n,i}^\iota(t) = \abs{\cS_{n,i}^\iota(t)}$, the number of susceptible individuals of type~$i$ at time~$t$. Let 
\be{
  S^{\iota}_{n}(t)= \bclr{S^{\iota}_{n,1}(t),\dots,S^{\iota}_{n,k}(t)},
}
the vector of numbers of all types of individuals. We write 
\be{
  \abs{S_n^{\iota}(t)}_1=\sum_{i\in[k]}S_{n,i}^\iota(t),
}
which is the total number of susceptible individuals. Similar notation is used for infected and recovery individuals: Let $\cI^{\iota}_{n,i}(t)$ and $\cR^{\iota}_{n,i}(t)$ denote the infected and removed individuals of type~$i$ at time $t$, respectively, and  
\be{
  \cI^{\iota}_{n}(t)= \bclr{\cI^{\iota}_{n,1}(t),\dots,\cI^{\iota}_{n,k}(t)},
  \qquad
  \cR^{\iota}_{n}(t)= \bclr{\cR^{\iota}_{n,1}(t),\dots,\cR^{\iota}_{n,k}(t)}.
}
Similarly,
\be{
  I^{\iota}_{n}(t)= \bclr{I^{\iota}_{n,1}(t),\dots,I^{\iota}_{n,k}(t)},
  \qquad
  R^{\iota}_{n}(t)= \bclr{R^{\iota}_{n,1}(t),\dots,R^{\iota}_{n,k}(t)} 
}
are the  vectors of numbers of all types of infected and recovery individuals, respectively. 
 
\subsection{A general epidemic process}\label{general}

The epidemic evolves according to the following scheme. For each type~$i$, let $\xi_{i}=\clr{\xi_{i,1},\dots,\xi_{i,k}}$ denote a collection of $k$ simple point processes on the positive real line being independent of population size $n$. These processes are called \emph{contact processes}, and we interpret the points in $\xi_{i,j}$ as the times when an individual of type~$i$ contacts individuals of type~$j$ after having been infected. If there are more than $n_j$ points in the process $\xi_{i,j}$ then only the first $n_j$ points represent contacts. Moreover, let $Q_{i}$ be a positive random variable, which we interpret as the time an individual of type~$i$ spends in the infected state. A susceptible individual contacted by an infected individual turns into an infectious individual. The processes $\xi_{i,1},\dots,\xi_{i,k}$ need not be independent of each other, and their joint distribution is allowed to depend on the type~$i$. We assume the cumulative distribution function $F_{Q_i}$ is differentiable on $\IR_+$.

Now, for each $i\in[k]$, each individual $v\in \cV_{n,i}$ is equipped with an independent copy $(Q^v_i,\xi^v_{i})$ of $(Q_i,\xi_{i})$. When $v$ is infected, it makes contacts according to $\xi^v_{i} = (\xi^v_{i,1},\dots,\xi^v_{i,k})$, where $0$ in the point process time line is to interpreted as the time when $v$ was infected in the absolute time line. It does so for an amount of time determined by $Q^v_i$, after which it changes to the removed state, and is no longer infectious. The individuals contacted during the infectious state are chosen independently at random without replacement from the respective sets $\cV_{n,j}$. For $j=i$, the individual is allowed to contact itself, which, however, results in no infection.

Since only contacts in the point processes during which an individual is infectious matter for the evolution of the epidemic, we introduce notation for these truncated point processes. For $i$, $j\in[k]$ and $x\geq 0$, let
\be{
  \xi_{x,i,j}(A)= \xi_{i,j}(A \cap [0,x]),\qquad A\in \F{B}(\IR_+),
} 
and let
\be{
  \bR_{0,i,j}=\IE \xi_{Q_i,i,j}(\IR_+),
}
and $M=(\bR_{0,i,j})_{i,j\in[k]}$. Denote the relative intensity measure of $\xi_{Q_i,i,j}$ by 
\be{
  G_{i,j}(A) = \IE\xi_{Q_i,i,j}(A)/\bR_{0,i,j},\qquad A\in \F{B}(\IR_+).
}

While it would mathematically suffice to only work with the $\xi_{Q_i,i,j}$ and without introducing the $Q_i$ and the $\xi_{i,j}$ separately, we believe it is conceptually clearer to separate the two components. We make the following assumptions on $\clc{\xi_{Q_i,i,j}}_{i,j\in[k]}$.

\begin{assumption}\label{dTVxi}\noindent
\begin{enumerate}[label={$(\roman*)$}]
 \item Assume $\IE \xi_{Q_i,i,j}(\IR_+)^2<\infty$ for $i$, $j\in[k]$.
\item The matrix $M$ is irreducible, and its largest eigenvalue is larger than 1.
\item For all $i,j\in[k]$, the relative intensity measure $G_{i,j}$ is absolutely continuous with respect to Lebesgue measure and the superposition $\xi_{i,1}+\dots+\xi_{i,k}$ is a simple point process.
\end{enumerate}
\end{assumption}
Define
\be{
  M^L_{i,j}(s)=\bR_{0,i,j}\int_{0}^{\infty}e^{-su}G_{i,j}(du),
}
and let $M^L(s)$ denote the $k\times k$ matrix with elements $M^L_{i,j}(s)$. Note that $M^L(0)=M$.

Denote by $\C{B}^{\iota}_{n, i}(t)$  the set of individuals of type~$i$ that, at time $t$, are either infected or removed. Let $\C{B}^{\iota}_n(t)=\bclr{\C{B}^{\iota}_{n, 1}(t),\dots,\C{B}^{\iota}_{n, k}(t)}$, let $B^{\iota}_{n,i}(t)=\abs{\C{B}^{\iota}_{n, i}(t)}$, and let $B_n^{\iota}(t)=\clr{B^{\iota}_{n,1}(t),\dots,B^{\iota}_{n,k}(t)}$. The total number of individuals in $\C{B}_n^{\iota}(t)$ is denoted by $\abs{B_n^{\iota}(t)}_1=\sum_{j\in[k]}B^{\iota}_{n, j}(t)$.

For the early stages of the epidemic, the process $\C{B}^{\iota}_n(t)$ can be well approximated by a multi-type Crump-Mode-Jagers (C-M-J) branching process. Let $\C{B}^{\iota\prime}$ denote a such branching process, starting with one individual of type~$\iota$, and offspring distribution given by $\clc{\xi_{Q_i,i,j}}_{i,j\in[k]}$. Let $B^{\iota\prime}(t)=\clr{B^{\iota\prime}_{1}(t),\dots,B^{\iota\prime}_{k}(t)}$ denote the vector of the number of individuals of each type that have been born in the branching process before time $t$, and denote by $\abs{B^{\iota\prime}(t)}_1=\sum_{j\in[k]}B^{\iota\prime}_{j}(t)$ the total number of individuals in the branching process. 

The Malthusian parameter of the branching process $\C{B}^{\iota\prime}$, denoted by $\C{M}$, is positive, and the largest eigenvalue of $M^L(\C{M})$ is 1. Let $\zeta=\clr{\zeta_{1},\dots,\zeta_{k}}$ and $\eta=\clr{\eta_{1},\dots,\eta_{k}}$ denote the positive left and right eigenvectors of $M^L(\C{M})$ associated with eigenvalue 1, normalised such that $\zeta^\trans{\bf 1}=\zeta^\trans\eta=1$. Part $(iii)$ of Assumption~\ref{dTVxi} implies the branching process $\C{B}^{\iota\prime}$ is supercritical.

It is well-known --- see for example \cite{iksanov2015rate} or \cite{jagers1989general} --- that 
\be{\label{forwardB}
    B^{\iota\prime}(t)e^{-\C{M} t}\rightarrow W^{\iota}\zeta \quad \text{in}\quad L_1 \quad \text{as}\quad t\rightarrow\infty.
}
Here, $W^{\iota}$ is a random variable.

For the analysis of the final size of susceptible individuals of type~$\nu\in[k]$, we use a backward branching process starting from an individual of type~$\nu$, denoted by $\widehat{\C{B}}^{\nu\prime}$, to approximate the susceptibility process (which will be introduced later); see for example \cite[Section 10.5.2]{diekmann2000mathematical} and \cite{britton2019infector}. Specifically, this multi-type branching process is defined by follows: An individual of type~$j$ gives births of type~$i$ according to a Poisson point process with intensity measure $p_i\bR_{0,i,j}G_{i,j}/p_j$ and all the Poisson point processes are independent.

Let
\be{
    \widehat\bR_{0,j,i}=\frac{p_i}{p_j}\IE \xi_{Q_i,i,j}(\IR_+)=\frac{p_i}{p_j}\bR_{0,i,j},\quad i,j\in[k],
}
and denoted by $\widehat{M}$ the matrix with elements $\widehat\bR_{0,i,j}$. 
Let
\be{
  \widehat{M}^L_{i,j}(s)=\widehat\bR_{0,i,j}\int_{0}^{\infty}e^{-su}G_{i,j}(du).
}
The branching process $\widehat{\C{B}}^{\nu\prime}$ has the Malthusian parameter, denoted by $\widehat{\C{M}}>0$, and $\widehat{M}^L(\widehat{\C{M}})$ has largest eigenvalue 1. Denote by $\hat{\zeta}=\clr{\hat{\zeta}_1,\dots,\hat{\zeta_k}}$ and $\hat{\eta}=\clr{\hat{\eta}_1,\dots,\hat{\eta}_k}$ the positive left and right eigenvectors of $\widehat{M}^L(\widehat{\C{M}})$ associated with eigenvalue 1, normalised such that $\hat{\zeta}^\trans{\bf 1}=\hat{\zeta}^\trans\hat{\eta}=1$. Similarly as before, we have  
\ben{\label{backwardB}
    \widehat{B}^{\nu\prime}(t)e^{-\widehat{\C{M}} t}\rightarrow \widehat{W}^{\nu}\hat{\zeta} \qquad \text{in $L_1$  as $t\rightarrow\infty$},
}
where $\widehat{W}^{\nu}$ is a random variable with a Laplace transform 
\be{
  \psi^{\nu}(s)=\IE e^{-s\widehat{W}^{\nu}}
} 
satisfying the implicit equations
\be{
        \psi^{\nu}(s)= \exp\bbbclr{-\sum_{i=1}^k\widehat\bR_{0,\nu,i}\int_0^{\infty}(1-\psi_{i}(se^{-\widehat{\C{M}}u}))G_{i,\nu}(du)} ,\qquad \nu\in[k].
}
Now let $\tau_n$ be the random time satisfying  $|B^{\iota\prime}(\tau_n)|=\lfloor n^{17/24} \rfloor$, which is well-defined due to $(iii)$ of Assumption~\ref{dTVxi}. Moreover, let 
\be{
  t_n(u)=\frac{17}{24\C{M}}\log n + u,  \qquad  u\in\IR.
} 
Define 
$A_n= \clc{|B^{\iota\prime}\clr{\tau_n}|e^{-\C{M}\tau_n} \geq n^{-1/48}}\cap\clc{|B^{\iota\prime}\clr{\tau_n}|-n^{11/24}\leq|B_n^{\iota}\clr{\tau_n}|\leq |B^{\iota\prime}\clr{\tau_n}|}$. Note that
$\clc{|B^{\iota\prime}\clr{\tau_n}|e^{-\C{M}\tau_n} \geq n^{-1/48}}=\clc{\tau_n\leq \C{M}^{-1}\log\clr{\floor{n^{17/24}}n^{1/48}}}$ which can be viewed as a subset of the occurrence of an epidemic, where an epidemic refers to the infection of a significant portion of the population, such as at least a fractional power of the population, and $\clc{|B^{\iota\prime}\clr{\tau_n}|-n^{11/24}\leq|B^{\iota}\clr{\tau_n}|\leq |B^{\iota\prime}\clr{\tau_n}|}$ is the event that the size difference between the forward branching process $\C{B}^{\iota\prime}$ and the epidemic process $\C{B}_n^{\iota}$ is bounded by $n^{11/24}$ at the random time $\tau_n$.

\begin{theorem}\label{maintheorem}
Under Assumptions~\ref{initial} and~\ref{dTVxi}, there exists a coupling between $\C{B}^{\iota}_n$ and $\C{B}^{\iota\prime}$ such that $\lim_{n\to\infty}\IP\cls{A_n}>0$ and such that, for any $\varepsilon>0$, $\nu\in[k]$ and $T\in\IR$,
\ben{\label{maintheoremP}
  \lim_{n\rightarrow\infty}\IP\bbbcls{\sup_{u\in(-\infty,T]}\left|n_{\nu}^{-1}S^{\iota}_{n,\nu}\clr{\tau_n+t_n(u)}-\bss^{\iota}_{\nu}(u)\right|>\varepsilon\given A_n}=0,
}
where $\bss^{\iota}_{\nu}(u)=\psi^{\nu}\bclr{e^{\widehat{\C{M}}u}m_*}$, 
$m_*=\sum_{l\in[k]}\zeta_l \hat{\zeta}_l p_l^{-1},$
and $\psi^{\nu}(v)=\IE\clc{e^{-v\widehat{W}^{\nu}}}$.
Moreover, if $\bss^{\iota}_{\nu}(\infty)=0$, the time interval $(-\infty,T]$ in \eqref{maintheoremP} can be replaced by $\mathbb R$.
\end{theorem}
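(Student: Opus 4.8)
The plan is to prove \eqref{maintheoremP} by combining a \emph{forward} coupling of the epidemic with the Crump--Mode--Jagers process $\C{B}^{\iota\prime}$, which pins down the random shift $\tau_n$ and the type-profile $\zeta$ of the early infectives, with a \emph{backward} branching analysis of the susceptibility of a single typical vertex. \textbf{Step 1 (forward coupling and the event $A_n$).} First I would build, on one probability space, a coupling of $\C{B}_n^\iota$ and $\C{B}^{\iota\prime}$ that keeps the two processes identical until the branching process first reaches size $\lfloor n^{17/24}\rfloor$, i.e.\ up to $\tau_n$, with the size discrepancy controlled by $n^{11/24}$ on $A_n$. The only source of disagreement is a \emph{collision}, i.e.\ two already-infected individuals independently contacting the same vertex, or a contacted vertex already being infected or removed; with at most $n^{17/24}$ infectives and finite second moments from Assumption~\ref{dTVxi}$(i)$, the expected number of such collisions up to $\tau_n$ is of order $(n^{17/24})^2/n=n^{5/12}\ll n^{11/24}$. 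This is exactly where the technical restriction of \cite{barbour2013approximating} is dispensed with, now requiring only Assumption~\ref{dTVxi}$(i)$ and~$(iii)$. Together with the $L_1$-convergence \eqref{forwardB} and supercriticality this gives $\liminf_{n}\IP\cls{A_n}>0$, since the first event defining $A_n$ is, up to $\lito(1)$ probability, the survival event of $\C{B}^{\iota\prime}$. A structural fact used throughout is that $\widehat M^L(s)$ is conjugate to $M^L(s)$ through $D=\mathrm{diag}(p_1,\dots,p_k)$ (from the backward birth intensities $p_i\bR_{0,i,j}G_{i,j}/p_j$), so the forward and backward Malthusian parameters coincide, $\widehat{\C{M}}=\C{M}$.

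\textbf{Step 2 (backward process and the single-vertex limit).} Fix $v\in\cV_{n,\nu}$ and write $T^*=\tau_n+t_n(u)$. By exchangeability within types, $\IE\bcls{n_\nu^{-1}S_{n,\nu}^\iota(T^*)}=\IP\cls{v\text{ susceptible at }T^*}$, so it suffices to analyse one vertex and then control fluctuations. A vertex is susceptible exactly when no infectious contact has ever been aimed at it, and tracing the potential infection chains backward from $v$ produces, up to a collision error as in Step~1, the backward branching process $\widehat{\C{B}}^{\nu\prime}$, whose frontier grows according to \eqref{backwardB}; the vertex $v$ escapes infection iff this backward exploration fails to splice into the forward infected cluster. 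Because $\widehat{\C{M}}=\C{M}$, the forward and backward explorations grow at a common rate and the splicing is asymptotically scale-free: the forward side contributes only its frozen type-profile $\zeta$ (its scale pinned at $n^{17/24}$ by the definition of $\tau_n$), the backward side contributes $\widehat W^\nu\hat\zeta$ together with the residual growth $e^{\widehat{\C{M}}u}$, and the cross-type overlap of the two eigen-profiles, normalised through $n_l\sim p_l n$ (Assumption~\ref{initial}), produces the constant $m_*=\sum_{l}\zeta_l\hat\zeta_l/p_l$. The upshot is that the number of forward--backward connections converges, conditionally on $\widehat W^\nu$, to $m_*\widehat W^\nu e^{\widehat{\C{M}}u}$; a Poisson approximation then gives $\IP\cls{v\text{ susceptible}\mid\widehat W^\nu}\to\exp\bclr{-m_*\widehat W^\nu e^{\widehat{\C{M}}u}}$, and averaging over $\widehat W^\nu$ yields $\psi^\nu\bclr{e^{\widehat{\C{M}}u}m_*}=\bss^\iota_\nu(u)$. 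It is precisely the random shift by $\tau_n$ that eliminates the forward randomness $W^\iota$ from the limit, while the backward randomness $\widehat W^\nu$ survives inside $\psi^\nu$.

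\textbf{Step 3 (concentration, uniformity, extension to $\IR$).} To pass from the first moment to convergence in conditional probability I would bound $\Var\bclr{n_\nu^{-1}S_{n,\nu}^\iota(T^*)}$ by analysing pairs $(v,w)$: the backward explorations of two distinct typical vertices coincide only through a further collision event of vanishing probability, so $\Cov\bclr{\I\cls{v\text{ susc}},\I\cls{w\text{ susc}}}=\lito(1)$ for all but a negligible fraction of pairs, whence the variance tends to $0$ and pointwise convergence in probability holds for each fixed $u$, all conditionally on $A_n$ (whose probability is bounded below by Step~1). Uniformity over $u\in(-\infty,T]$ then follows from a monotone (P\'olya--Dini) argument: the prelimit $u\mapsto n_\nu^{-1}S_{n,\nu}^\iota(\tau_n+t_n(u))$ is non-increasing, the limit $\bss^\iota_\nu$ is continuous and non-increasing with $\bss^\iota_\nu(-\infty)=\psi^\nu(0)=1$, so pointwise convergence on a finite grid together with monotone sandwiching controls the supremum, the left tail being absorbed by the trivial bound $n_\nu^{-1}S_{n,\nu}^\iota\le1$. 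Finally, when $\bss^\iota_\nu(\infty)=0$, equivalently $\IP\cls{\widehat W^\nu=0}=0$, the right tail is handled identically: monotonicity bounds the prelimit for all large $u$ by its value at a large fixed grid point, which can be made arbitrarily small, so the supremum extends to all of $\IR$.

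\textbf{Main obstacle.} I expect the crux to be the forward--backward splicing estimate of Step~2: one must show that, uniformly over the whole window and after conditioning on $A_n$, the forward infected set and the backward potential-ancestor set behave like two independent exponentially growing populations sampled into the $n$ vertices, so that the number of connections between them is genuinely Poisson with the stated mean $m_*\widehat W^\nu e^{\widehat{\C{M}}u}$, with all depletion and self-collision corrections provably negligible and the constant $m_*$ extracted exactly. Carrying this out under only the second-moment Assumption~\ref{dTVxi}$(i)$ (in place of the stronger hypothesis used in \cite{barbour2013approximating}), and controlling it uniformly enough to survive the supremum over $u$, is the technical heart; the concentration and monotone-uniformity arguments are comparatively routine once the single-vertex asymptotics and the pairwise decorrelation are in hand.
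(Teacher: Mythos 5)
Your proposal follows essentially the same route as the paper's proof: a forward ghost/collision coupling of $\C{B}^{\iota}_n$ with $\C{B}^{\iota\prime}$ up to size $\lfloor n^{17/24}\rfloor$ (with the $n^{11/24}$ discrepancy controlled exactly as in Lemma~\ref{L+17/24}), a backward susceptibility exploration coupled to $\widehat{\C{B}}^{\nu\prime}$, a hypergeometric-to-Poisson approximation for the forward--backward intersection yielding the limit $\IE\exp\{-m_*\widehat{W}^{\nu}e^{\widehat{\C{M}}u}\}$, a two-vertex second-moment bound for concentration, and a monotonicity argument for uniformity and the extension to $\IR$ when $\bss^{\iota}_{\nu}(\infty)=0$. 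The steps, orders of magnitude, and the role of the random shift $\tau_n$ in eliminating $W^{\iota}$ all match the paper's argument.
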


\begin{remark} The open question is whether $\lim_{n\to\infty}\IP\cls{A_n}$ equals the survival probability of $\C{B}^{\iota\prime}$ and if not, whether a different coupling and a different sequence of events exists for which the probabilities do converge to the survival probability.   
\end{remark}

Theorem~\ref{maintheorem} demonstrates that, in the case of a large population size, the susceptible curve is determined asymptotically by a continuous curve after a random time shift. The next theorem indicates that the infected and recovery ratio curves also exhibit similar FLLN properties.

\begin{theorem}\label{IEmodels}
Under Assumptions~\ref{initial} and~\ref{dTVxi} and using the notation of Theorem~\ref{maintheorem}, there exists a coupling between $\C{B}^{\iota}_n$ and $\C{B}^{\iota\prime}$ such that $\lim_{n\to\infty}\IP\cls{A_n}>0$ and such that the following holds. For any $\varepsilon>0$, $\nu\in[k]$ and $T\in\IR$,
\ben{\label{FLLNir}
  \lim_{n\rightarrow\infty}\IP\bbbcls{\sup_{u\in(-\infty,T]}\left|n_{\nu}^{-1}I^{\iota}_{n,\nu}\clr{\tau_n+t_n(u)}-\bsi^{\iota}_{\nu}(u)\right|>\varepsilon\given A_n}=0,
}
and
\ben{\label{FLLNr}
  \lim_{n\rightarrow\infty}\IP\bbbcls{\sup_{u\in(-\infty,T]}\left|n_{\nu}^{-1}R^{\iota}_{n,\nu}\clr{\tau_n+t_n(u)}-\bsr^{\iota}_{\nu}(u)\right|>\varepsilon\given A_n}=0.
}
The curves  $(\bss^{\iota}_{\nu},\bsi^{\iota}_{\nu},\bsr^{\iota}_{\nu})$ are
the unique solutions to the deterministic integral equations
\besn{\label{IE}
     \bss^{\iota}_{\nu}(t)&= \exp\bbbclr{-\sum_{i=1}^k\widehat\bR_{0,\nu,i}\int_0^{\infty}(1-\bss^{\iota}_{i}(t-u))G_{i,\nu}(du)},\\
    \bsi^{\iota}_{\nu}(t)&= 1-\bss^{\iota}_{\nu}(t)-\int_{-\infty}^{t}(1-\bss^{\iota}_{\nu}(v))dF^{c}_{Q_{\nu}}\clr{t-v},\\
    \bsr^{\iota}_{\nu}(t)&=-\int_{-\infty}^{t}(1-\bss^{\iota}_{\nu}(v))dF_{Q_{\nu}}\clr{t-v},
}
where $dF_{Q_i}^c(u-v)$ is the differential of the map $v\rightarrow F_{Q_i}^c(u-v)$, and where the initial conditions are given by
\be{
  \bss^{\iota}_{\nu}(-\infty)=1,\qquad\bsi^{\iota}_{\nu}(-\infty)=0,\qquad\bsr^{\iota}_{\nu}(-\infty)=0,
}
and
\be{
  \lim_{t\rightarrow-\infty}e^{-\widehat{\C{M}}t}\frac{d}{dt}\bss^{\iota}_{\nu}(t)
  =-m_*\widehat{\C{M}}\IE\widehat{W}^{\nu}.
}
\end{theorem}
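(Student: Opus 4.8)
The plan is to take the susceptible curve from Theorem~\ref{maintheorem} as given and to recover the infected and removed curves by \emph{thinning the infections according to the recovery durations}, using the same coupling and the event $A_n$ supplied by that theorem. Since each individual is susceptible, infected, or removed, we have $B^\iota_{n,\nu}=n_\nu-S^\iota_{n,\nu}$, so Theorem~\ref{maintheorem} furnishes, conditional on $A_n$, the uniform convergence of $n_\nu^{-1}B^\iota_{n,\nu}(\tau_n+t_n(u))$ to $1-\bss^\iota_\nu(u)$ on $(-\infty,T]$. Writing $T_v$ for the infection time of a type-$\nu$ individual $v$ and $D_v=Q^v_\nu$ for its recovery duration, we have the exact identities $R^\iota_{n,\nu}(t)=\sum_{v\in\cV_{n,\nu}}\I[T_v+D_v\le t]$ and $I^\iota_{n,\nu}(t)=\sum_{v}\I[T_v\le t<T_v+D_v]=B^\iota_{n,\nu}(t)-R^\iota_{n,\nu}(t)$, so the entire task is to split the known limit of $B^\iota_{n,\nu}$ into these two pieces.

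The structural fact that makes this work is that $v$'s own recovery duration $D_v$ never influences its infection time $T_v$ --- an individual makes no contacts before being infected --- so $D_v$ is independent of $T_v$. Consequently $R^\iota_{n,\nu}(t)-\int_{-\infty}^tF_{Q_\nu}(t-s)\,dB^\iota_{n,\nu}(s)=\sum_{v:\,T_v\le t}\bclr{\I[D_v\le t-T_v]-F_{Q_\nu}(t-T_v)}$ has mean zero. The step I expect to be the crux is showing that this fluctuation is $\lito(n)$. I would obtain it from an \emph{exact decorrelation} enjoyed by product-form functionals $\sum_vg(T_v)\tilde h(D_v)$ with $\IE\tilde h(Q_\nu)=0$: conditioning on all recovery durations except $D_v$ and $D_w$ renders $T_v$ a function of $D_w$ alone and $T_w$ a function of $D_v$ alone, so the summand product factorizes across the independent pair $(D_v,D_w)$; because infection is acyclic in time, at most one of $v,w$ can influence the other's infection time, which forces one factor to have mean zero and hence $\Cov\bclr{g(T_v)\tilde h(D_v),\,g(T_w)\tilde h(D_w)}=0$. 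Each such functional therefore has variance $\bigo(n)$. Decomposing $\I[D_v\le t-T_v]-F_{Q_\nu}(t-T_v)$ over a fine time grid into finitely many product-form pieces --- with the discretization error controlled by the continuity of $F_{Q_\nu}$ --- then yields $\Var\bclr{R^\iota_{n,\nu}(t)-\int_{-\infty}^tF_{Q_\nu}(t-s)\,dB^\iota_{n,\nu}(s)}=\lito(n^2)$ and hence the desired pointwise concentration.

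Combining this with the conditional convergence of $B^\iota_{n,\nu}$, and changing variables $s=\tau_n+t_n(w)$ so that $\tau_n+t_n(u)-s=u-w$, the random convolution converges to $\int_{-\infty}^uF_{Q_\nu}(u-w)\,d\bclr{1-\bss^\iota_\nu(w)}$, where the exponential decay $1-\bss^\iota_\nu(w)=\bigo(e^{\widehat{\C{M}}w})$ as $w\to-\infty$ makes the tail negligible. An integration by parts turns this into the stated forms of $\bsr^\iota_\nu$ and $\bsi^\iota_\nu$ in \eqref{IE}. To pass from pointwise to the uniform-in-$u$ statements \eqref{FLLNir}--\eqref{FLLNr}, I would use that $R^\iota_{n,\nu}$ is monotone and the limit $\bsr^\iota_\nu$ is continuous (a Dini/Glivenko--Cantelli argument), together with $\bsr^\iota_\nu(-\infty)=0$ to handle the tail; since $I^\iota_{n,\nu}=B^\iota_{n,\nu}-R^\iota_{n,\nu}$, uniform convergence of $I^\iota_{n,\nu}$ follows from that of $B^\iota_{n,\nu}$ and $R^\iota_{n,\nu}$. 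Conditioning on $A_n$ costs only a factor $1/\IP[A_n]\to 1/c$ in these probability bounds.

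Finally, for the integral equations: the first line of \eqref{IE} is not new, since substituting $\bss^\iota_\nu(t)=\psi^\nu(m_*e^{\widehat{\C{M}}t})$ into the implicit equation for $\psi^\nu$ from Theorem~\ref{maintheorem} reproduces it verbatim, while the second and third lines are exactly the convolutions obtained above. For uniqueness I would exploit the triangular structure: $(\bss^\iota_i)_{i\in[k]}$ solves a closed renewal-type system, after which $\bsi^\iota_\nu,\bsr^\iota_\nu$ are explicit functionals of $\bss^\iota$; uniqueness of the renewal system with the prescribed behaviour at $-\infty$ follows from a Gronwall argument (equivalently, from uniqueness of the Laplace transform $\psi^\nu$). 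The initial conditions $\bss^\iota_\nu(-\infty)=1$, $\bsi^\iota_\nu(-\infty)=\bsr^\iota_\nu(-\infty)=0$ and the rate condition $\lim_{t\to-\infty}e^{-\widehat{\C{M}}t}\tfrac{d}{dt}\bss^\iota_\nu(t)=-m_*\widehat{\C{M}}\IE\widehat{W}^{\nu}$ read off from the small-argument expansion $\psi^\nu(s)=1-s\IE\widehat{W}^{\nu}+\lito(s)$, which is valid because $\IE\widehat{W}^{\nu}<\infty$ under the second-moment part of Assumption~\ref{dTVxi}.
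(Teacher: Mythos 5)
Your proposal follows the same architecture as the paper's proof: express $R^{\iota}_{n,\nu}$ (equivalently $I^{\iota}_{n,\nu}$, via the conservation law) as a sum of recovery indicators over infected individuals, identify its mean as the convolution of $F_{Q_\nu}$ against $-dS^{\iota}_{n,\nu}$, kill the fluctuation with a second-moment bound, upgrade pointwise to uniform convergence using monotonicity of $R^{\iota}_{n,\nu}$ and continuity of the limit, and read off the integral equations and the initial/rate conditions from the implicit equation for $\psi^{\nu}$ together with the matching of $\IE\widehat{W}^{\nu}$ (the paper makes exactly this point about selecting the relevant solution among the scale family $\psi(h\cdot)$). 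The one place you genuinely diverge is the justification of the second-moment bound. The paper conditions on the $\sigma$-algebra $\C{F}^{*}_{n,u}$ generated by $A_n$ and the stopped susceptible trajectory; given that trajectory the infection times $\pi_{n,a}$ are determined, the centred indicators $\chi_{n,a}(u)$ are asserted to be conditionally centred and pairwise orthogonal, and the conditional variance is $\bigo(n^{-1})$ --- with the added benefit that the randomness of $\tau_n$ is absorbed into the conditioning. You instead work unconditionally and derive pairwise decorrelation from acyclicity of influence. Two cautions on that route. First, the assertion that ``at most one of $v,w$ can influence the other's infection time'' is the crux and is stated too loosely: after conditioning on everything except $(D_v,D_w)$, whether $w$'s infectious period lies on a causal chain to $v$ is itself a function of $(D_v,D_w)$, so you must actually prove that almost surely one of the maps $d_w\mapsto T_v(d_w)$, $d_v\mapsto T_w(d_v)$ is constant; this is true (it follows from the directed-graph representation of the epidemic used in Section 4.2 and the fact that simultaneous infections have probability zero), but it needs a real argument, not a clause --- though in fairness the paper's own ``it is easy to check'' for the conditional orthogonality is comparably terse. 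Second, your variance bound is computed at deterministic times, whereas the statement concerns the random time $\tau_n+t_n(u)$; you need either to run the decorrelation conditionally on $\tau_n$ (which is correlated with the recovery durations of early cases) or to take a supremum over a deterministic grid on $[0,\bigo(\log n)]$ and interpolate using monotonicity of both $R^{\iota}_{n,\nu}$ and the convolution. With those two points repaired, your route is a valid and arguably more transparent alternative to the paper's conditional computation, at the price of a harder combinatorial lemma.
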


\subsection{Empirical point processes as contact processes}\label{contempirical}
In this subsection, consider the special case when the contact process is given by an empirical point process of independent points. As before, the first infected individual is of type~$\iota$ and $\C{B}^{\iota}_{n, i}(t)$ denotes the set of individuals of type~$i$ that, at time $t$, are either infected or removed.  In what follows, we will let $\xi_{n,i}=\clr{\xi_{n,i,1},\dots,\xi_{n,i,k}}$ be a collection of $k$ simple point processes on the positive real line. Each process $\xi_{n,i,j}$ will contain exactly $n_j$ points.  Specifically, we define $\xi_{n,i}$ as follows: Let $\clc{X_{n,i,j}}_{i,j\in[k]}$ be a set of positive independent random variables, and denote their distributions by $\clc{F_{X_{n,i,j}}}_{i,j\in[k]}$, and let $X_{n,i,j,1}, \ldots, X_{n,i,j,n_j}$ represent independent and identically distributed copies of $X_{n,i,j}$ for $i,j \in [k]$. Define
\bgn{\label{superposition1}
   \xi_{n,i,j}= \sum_{l=1}^{n_j}\delta_{X_{n,i,j,l}},
    \qquad
   \xi_{n,x,i,j}= \sum_{l=1}^{n_j}\delta_{X_{n,i,j,l}}(\cdot\cap[0,x]),\\ \label{superposition2}
   \xi_{n,x,i}= \bclr{\xi_{n,x,i,1},\dots,\xi_{n,x,i,k}},
}
where $x\geq 0$.

For $Q_{i}$ a random variable representing the time it takes for individuals of type~$i$ to recover, denote the relative intensity measure of $\xi_{n,Q_,i,j}$ by $G_{n,i,j}$; that is, for every Borel set $A\in \F{B}(\IR_+) $,
\be{
    G_{n,i,j}(A)=\IE \xi_{n,Q_i,i,j}(A)/\bR_{0,n,i,j},
}
where $\bR_{0,n,i,j}=\IE \xi_{n,Q_i,i,j}(\IR_{+})$.

Similarly as before, the  process $\C{B}^{\iota}_{n}$ can be approximated by a C-M-J branching process, which is denoted by $\C{B}^{\iota\prime}_{n}$; since now the birth giving processes $\clc{\xi_{n,i,j}}_{i,j\in[k]}$ depend on the population size $n$, we label it with subscript~$n$. 

Note that $\xi_{n,x,i,j}$ can be thought of as a binomial process, that is, 
\be{
\law(\xi_{n,x,i,j}) = \law\bbbbclr{\, \sum_{l=1}^{\alpha_{n,x}}\delta_{Y_{l,n,x,i,j}}}
}
where $Y_{1,n,x,i,j}, Y_{2,n,x,i,j}, \dots$ are independent and identically distributed random variables with cumulative distribution function 
\be{
  F_{Y_{1,n,x,i,j}}(t) = \frac{F_{X_{n,i,j}}(t\wedge x)}{F_{X_{n,i,j}}([0,x])}.
}
and $\alpha_{n,x} \sim \Bi(n,F_{X_{n,i,j}}([0,x]))$. The binomial process $\xi_{n,x,i,j}$ can be approximated by a Poisson point process.

\begin{assumption}\label{Lambda}
For every $x\in\IR_+$ and $i,j\in[k]$, there exists a measure $ \Lambda_{x,i,j}$ such that for any $A\in\F{B}(\IR_+)$
\be{
  \Lambda_{x,i,j}(A)=\lim_{n\rightarrow\infty}
      \IE\bclc{\xi_{n,x,i,j}(A\cap [0,x])}.
}
\end{assumption}

Let $\xi_{x,i,j}$, $j\in[k]$, be independent Poisson point processes with intensity measures $\Lambda_{x,i,j}$, $j\in[k]$, and let $\xi_{x,i}=\bclr{\xi_{x,i,1},\dots,\xi_{x,i,k}}$. If we replace the constant $x$ by the random variable $Q_{i}$, the process $\xi_{Q_{i},i}$ is a Cox process directed by the random measure $\Lambda_{Q_i,i,1}\otimes\cdots\otimes\Lambda_{Q_i,i,k}$. 

For $i$, $j\in[k]$, let
\be{
  \bR_{0,i,j}=\IE \xi_{Q_i,i,j}(\IR_+),
}
and let $M=(\bR_{0,i,j})_{i,j\in[k]}$. Denote the relative intensity measure of $\xi_{Q_i,i,j}$ by 
\be{
  G_{i,j}(A) = \xi_{Q_i,i,j}(A)/\bR_{0,i,j},\qquad A\in \F{B}(\IR_+).
}
We make the following assumption.
 
\begin{assumption}\label{empirical}\noindent 
\begin{enumerate}[label={$(\roman*)$}]
\item 
Assume $\clc{\xi_{Q_i,i,j}}_{i,j\in[k]}$ satisfy Assumption~\ref{dTVxi}.
\item If $k>1$, the total variation distance between the distributions of $\xi_{n,Q_i,i}$ and $\xi_{Q_i,i}$ is of order $\lito(n^{-17/24})$ for $i\in[k]$. 
\item The total variation distance between $\bR_{0,i,j}G_{i,j}$ and $\bR_{0,n,i,j}G_{n,i,j}$ is of order $\lito(n^{-1/3})$ for $i$, $j\in[k]$.
\end{enumerate}
\end{assumption}
We will show that, under this assumption, using $\clc{\xi_{Q_i,i}}_{i\in[k]}$ as birth giving processes, we can define a C-M-J branching process $\C{B}^{\iota\prime}$ which can be coupled to $\C{B}^{\iota\prime}_{ n}$ successfully with high probability until the birth of the $\floor{n^{17/24}}$-th individual. Moreover, the susceptibility process of an individual of type  $\nu$ can be coupled to a backward branching process, denoted by $\widehat{\C{B}}^{\nu\prime}$, successfully with high probability until the birth of the $\floor{n^{1/3}}$-th individual. The backward branching process $\widehat{\C{B}}^{\nu\prime}$ is constructed by starting with an individual of type~$\nu$ and individuals of type~$j$ give births of type~$i$ according to Poisson point processes with intensity measure $p_i\bR_{0,i,j}G_{i,j}/p_j$.

\begin{theorem}\label{contactempiricaltheorem}
Under Assumptions~\ref{initial},~\ref{Lambda} and~\ref{empirical}, the conclusions of Theorems~\ref{maintheorem} and~\ref{IEmodels} hold.
\end{theorem}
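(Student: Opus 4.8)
The plan is to reduce the empirical model to the setting of Theorems~\ref{maintheorem} and~\ref{IEmodels} by showing that the $n$-dependent contact processes $\xi_{n,i}$ can be coupled, with high probability and up to the relevant branching-process sizes, to the $n$-independent limiting processes $\xi_{Q_i,i}$. The first observation is that part~$(i)$ of Assumption~\ref{empirical} guarantees that the limiting family $\clc{\xi_{Q_i,i,j}}_{i,j\in[k]}$ satisfies Assumption~\ref{dTVxi}; hence the forward branching process $\C{B}^{\iota\prime}$ and the backward branching process $\widehat{\C{B}}^{\nu\prime}$ built from these limiting processes are well-defined and supercritical, and they carry exactly the Malthusian data ($\C{M}$, $\widehat{\C{M}}$, the eigenvectors, and the random variables $W^{\iota}$, $\widehat{W}^{\nu}$) that enter the limiting curves \eqref{IE}. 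It therefore suffices to transport the conclusions of the two theorems from an epidemic driven by $\clc{\xi_{Q_i,i}}_{i\in[k]}$ to the actual epidemic driven by $\clc{\xi_{n,i}}_{i\in[k]}$.

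For the forward direction I would chain two couplings. First, exactly as in the proof of Theorem~\ref{maintheorem}, couple the empirical epidemic $\C{B}^{\iota}_n$ to the $n$-dependent branching process $\C{B}^{\iota\prime}_n$ carrying the \emph{same} offspring law $\clc{\xi_{n,i}}$; this is the sampling-without-replacement versus with-replacement coupling, which is exact until collisions appear and whose discrepancy stays within the tolerance $n^{11/24}$ built into the event $A_n$ up to size $\floor{n^{17/24}}$. Second, couple $\C{B}^{\iota\prime}_n$ to the limiting process $\C{B}^{\iota\prime}$: by part~$(ii)$ of Assumption~\ref{empirical} the laws of $\xi_{n,Q_i,i}$ and $\xi_{Q_i,i}$ differ in total variation by $\lito(n^{-17/24})$, so individual-by-individual maximal couplings can be made to succeed simultaneously for all $\floor{n^{17/24}}$ individuals with probability $1-\lito(1)$ by a union bound. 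Composing the two couplings yields a coupling between $\C{B}^{\iota}_n$ and $\C{B}^{\iota\prime}$ of the required kind and transfers \eqref{maintheoremP}.

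For the backward direction, the susceptibility process of a type-$\nu$ individual is coupled to the backward branching process $\widehat{\C{B}}^{\nu\prime}$, whose offspring intensities are built from $\bR_{0,i,j}G_{i,j}$. Here the relevant growth threshold is $\floor{n^{1/3}}$ individuals, and part~$(iii)$ of Assumption~\ref{empirical} supplies a total-variation bound of order $\lito(n^{-1/3})$ between $\bR_{0,n,i,j}G_{n,i,j}$ and $\bR_{0,i,j}G_{i,j}$; a union bound over the $\floor{n^{1/3}}$ individuals again makes the coupling succeed with probability tending to $1$. This transports the Laplace-transform description of $\widehat{W}^{\nu}$, hence the formula for $\bss^{\iota}_{\nu}$ in \eqref{IE}, and with it \eqref{FLLNir} and \eqref{FLLNr} through the same integral-equation argument used in Theorem~\ref{IEmodels}.

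The main obstacle is the quantitative matching of the three total-variation orders to the three branching-process growth scales. The binomial processes $\xi_{n,x,i,j}$ must first be shown, via a Poisson (Le Cam--Stein) approximation, to be close enough to their Poisson and Cox limits that parts~$(ii)$ and~$(iii)$ genuinely hold; one then has to verify that the per-individual errors $\lito(n^{-17/24})$ and $\lito(n^{-1/3})$ survive the union bounds over $\floor{n^{17/24}}$ and $\floor{n^{1/3}}$ individuals respectively, and that threading the forward coupling through the intermediate process $\C{B}^{\iota\prime}_n$ does not inflate the discrepancy beyond the $n^{11/24}$ allowance encoded in $A_n$. Once these accountings close, the conclusions of Theorems~\ref{maintheorem} and~\ref{IEmodels} follow essentially verbatim.
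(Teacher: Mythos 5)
Your proposal is correct and follows essentially the same route as the paper: both arguments chain the ghost coupling of $\C{B}^{\iota}_n$ to $\C{B}^{\iota\prime}_n$ with a total-variation (maximal) coupling of $\C{B}^{\iota\prime}_n$ to $\C{B}^{\iota\prime}$ using Assumption~\ref{empirical}(ii) over the first $\floor{n^{17/24}}$ births, couple the backward/susceptibility process to $\widehat{\C{B}}^{\nu\prime}$ via the mean-measure bound on the total variation distance between Poisson processes together with Assumption~\ref{empirical}(iii) over $\bigo(n^{1/3})$ births, and then rerun the proofs of Theorems~\ref{maintheorem} and~\ref{IEmodels} on the intersection of the good events. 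The only cosmetic difference is that the paper spells out the binomial-to-Poisson reduction (via Reiss) and Assumption~\ref{Lambda} to identify the limiting Cox processes before invoking the same union-bound accounting you describe.
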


\subsection{Example: The multi-type Markovian SIR process}
Recall \eqref{IE}; for $\nu\in[k]$, we obtain that 
\ba{
          \bsi^{\iota}_{\nu}(t)&= 1-\bss^{\iota}_{\nu}(t)-\gamma_{\nu}\int_{-\infty}^{t}(1-\bss^{\iota}_{\nu}(v))e^{-\gamma_{\nu}(t-v)}dv,\\
          \bsr^{\iota}_{\nu}(t)&=\gamma_{\nu}\int_{-\infty}^{t}(1-\bss^{\iota}_{\nu}(v))e^{-\gamma_{\nu}(t-v)}dv.
}
Differenting both sides of the second equation, we obtain that
\bes{
          \frac{d}{dt}\bsr^{\iota}_{ \nu}(t)&=\gamma_{\nu}\bbclr{1-\bss^{\iota}_{\nu}(t)-\gamma_{\nu}\int_{-\infty}^{t}(1-\bss^{\iota}_{\nu}(v))e^{-\gamma_{\nu}(t-v)}dv}
          =\gamma_{\nu} \bsi^{\iota}_{\nu}(t).
}
Thus
\ben{\label{MarkovSIR1}
          \frac{d}{dt}\bsi^{\iota}_{ \nu}(t)= -\frac{d}{dt}\bss^{\iota}_{ \nu}(t)- \frac{d}{dt}\bsr^{\iota}_{ \nu}(t)=-\frac{d}{dt}\bss^{\iota}_{ \nu}(t)-\gamma_{\nu} \bsi^{\iota}_{\nu}(t),
}
which implies
\besn{\label{MarkovSIR2}
          \bsi^{\iota}_{\nu}(t)&= -\int_{0}^{\infty}\frac{d}{ds}\bss^{\iota}_{ \nu}(s)|_{s=t-u}e^{-\gamma_{\nu} u}du
          =-\int_{-\infty}^t\frac{d}{d\nu}\bss^{\iota}_{ \nu}(v)e^{-\gamma_{\nu}(t-v)}dv\\
          &=-\bss^{\iota}_{\nu}(t)+\gamma_{\nu}\int_{-\infty}^t\bss^{\iota}_{\nu}(v)e^{-\gamma_{\nu}(t-v)}dv.
}
Now by \eqref{IE} the susceptible curve satisfies
\bes{
 \frac{d}{dt}\bss^{\iota}_{ \nu}(t)&=\bss^{\iota}_{\nu}(t) \frac{d}{dt}\bbbclr{-\sum_{i=1}^k\frac{p_i}{p_{\nu}} \int_0^{\infty}(1-\bss^{\iota}_{i}(t-u))\beta_{i,\nu}e^{-\gamma^{(i)u}}(du)} \\
 &=\bss^{\iota}_{\nu}(t) \bbbclr{\sum_{i=1}^k\frac{p_i}{p_{\nu}} \int_0^{\infty}\dot{\bss}^{\iota}_{i}(t-u)\beta_{i,\nu}e^{-\gamma^{(i)u}}(du)}\\
 &=-\bss^{\iota}_{\nu}(t)\sum_{i\in[k]}\frac{p_i}{p_{\nu}}\beta_{i,\nu}\bsi^{\iota}_{i}(t).
}
Thus the equations \eqref{IE} turn to the system of ODEs
\besn{\label{SIRODE}
     \frac{d}{dt}\bss^{\iota}_{ \nu}(t)&=-\bss^{\iota}_{\nu}(t)\bbbclr{\sum_{i\in[k]}\frac{p_i}{p_{\nu}}\beta_{i,\nu}\bsi^{\iota}_{i}(t)},\\
     \frac{d}{dt}\bsi^{\iota}_{ \nu}(t)&=\bss^{\iota}_{\nu}(t)\bbbclr{\sum_{i\in[k]}\frac{p_i}{p_{\nu}}\beta_{i,\nu}\bsi^{\iota}_{i}(t)}-\gamma \bsi^{\iota}_{\nu}(t),\\
     \frac{d}{dt}\bsr^{\iota}_{ \nu}(t)&=\gamma_{\nu} \bsi^{\iota}_{\nu}(t),
}
with initial conditions $ \bss^{\iota}_{\nu}(-\infty)=1$, $\bsi^{\iota}_{\nu}(-\infty)=0$, $\bsr^{\iota}_{\nu}(-\infty)=0$ and the constrain \eqref{psiEW}. The case in which $k=1$ makes the ODEs turn to the well known single type Markovian SIR ODEs or Kermack–McKendrick equations \eqref{deterministicSIR}.

\subsection{Example: Non-Markovian model with general infectious period}
Assume now that the contact process between any two individuals is Poisson and that $Q_i$ has a general distribution. The inter-arrival time has an exponential distribution $\Exp(\beta_{i,j}/n_j)$ and the distribution of the equilibrium excess lifetime $X^{\mathrm{e}}_{n,i,j}$ is also exponential (see for example \cite{bar2010characterization}), hence
\be{
   f_{X^{\mathrm{e}}_{n,i,j}}(t) =\frac{\int_t^{\infty}f_{i,j}(\tau)d\tau}{\IE X^{\mathrm{e}}_{i,j}}
   = e^{-\beta_{i,j} t/n_j}.
}
For the superposition process $\xi_{n,i,j}$ defined in \eqref{superposition1}, we have
\bes{
    \IE \xi_{n,i,j}([0,t])
    &=\IE\sum_{l=0}^{n_j}\I\cls{ X_{i,j,l}\leq t}\\
    &=n_j\bclr{1-e^{-\beta_{i,j} t/n_j}}\rightarrow \beta_{i,j} t,\quad n\rightarrow\infty.
}
Now,
\bes{
    \mel\IE\clc{\xi_{n,Q_i,i,j}([0,t])}\\
    &=\sum_{l=0}^{n_j} l\cdot\IP[\xi_{n,i,j}([0,t\wedge Q_{i}])=l]\\
    &=\sum_{l=0}^{n_j}\bbbclr{l\int_0^t\IP\cls{\xi_{n,i,j}[0,s]=l}f_{Q_{i}}(s)ds+l\IP\cls{\xi_{n,i,j}[0,t]=l}\IP\cls{Q_{i}\geq t}}\\
    &=\int_0^t\IE\clc{\xi_{n,i,j}[0,s]}f_{Q_{j}}(s)ds+\IE\clc{\xi_{n,i,j}[0,t]}\IP\cls{Q_{i}\geq t}\\
     &=\int_0^tn_j(1-e^{-\beta_{i,j} s/n_j})f_{Q_{i}}(s)ds+n_j(1-e^{-\beta_{i,j} t/n_j})F^{(i)c}_{Q_{i}}(t).
}
and
\bes{
    \mel\IE\clc{\xi_{n,Q_i,i,j}(\IR_+)}\\
    &=\int_0^{\infty}n_j(1-e^{-\beta_{i,j} s/n_j})f_{Q_{i}}(s)ds+\lim_{t\rightarrow\infty}n_j(1-e^{-\beta_{i,j} t/n_j})F_{Q_i}^c(t).
}
The relative intensity measure $G_{n,i,j}$ of $\xi_{n,Q_i,i,j}$ satisfies
\bes{
   \mel
   \IE\clc{\xi_{n,Q_i,i,j}(\IR_+)}G_{n,i,j}(dt)\\
   &=n_j(1-e^{-\beta_{i,j} t/n})f_{Q_{i}}(t)dt\\
   &\quad+\beta_{i,j} e^{-\beta_{i,j} t/n_j}F_{Q_i}^c(t)dt-n_j(1-e^{-\beta_{i,j} t/n_j})f_{Q_{i}}(t)dt\\
   &=\beta_{i,j} e^{-\beta_{i,j} t/n_j}F_{Q_i}^c(t)dt\rightarrow\beta_{i,j} F_{Q_i}^c(t)dt,\quad n\rightarrow\infty.
}
Recall \eqref{IE} and note that
\bes{
           \bss^{\iota}_{\nu}(t)&=\exp\bbbclc{-\sum_{i\in[k]}\frac{p_i}{p_{\nu}}\beta_{i,j}\int_0^{\infty}(1-\bss^{\iota}_{i}(t-u))F_{Q_i}^c(u)du},
}
which implies
\bes{
           \frac{d}{dt}\bss^{\iota}_{ \nu}(t)&=\bss^{\iota}_{\nu}(t)\bbclr{\sum_{i\in[k]}\frac{p_i}{p_{\nu}}\beta_{i,\nu}\int_{0}^{\infty}\dot{\bss}^{\iota}_{i}(t-u)dF^{(i)c}_{Q_{i}}(u)}\\
           &=-\bss^{\iota}_{\nu}(t)\bbbclr{\sum_{i\in[k]}\frac{p_i}{p_{\nu}}\beta_{i,\nu} \bsi_{i}(t)}.
}
In this case, the integral equations \eqref{IE} reduce to deterministic Volterra integral equations of the form
\ba{
           \bss^{\iota}_{\nu}&=1-\int_{-\infty}^t\bss^{\iota}_{\nu}(v)\bbbclr{\sum_{i\in[k]}\frac{p_i}{p_{\nu}}\beta_{i,\nu}\bsi_{i}^{\iota}(v)}dv,\\
          \bsi^{\iota}_{\nu}&= \gamma_{\nu}\int_{-\infty}^{t}F^c_{Q_\nu}\clr{t-v}\bss^{\iota}_{\nu}(v)\bsi^{\iota}_{j}(v)dv,\\
          \bsr^{\iota}_{\nu}&=\gamma_{\nu}\int_{-\infty}^{t}F_{Q_{\nu}}\clr{t-v}\bss^{\iota}_{\nu}(v)\bsi^{\iota}_{\nu}(v)dv.
}
These equations are consistent with the result of \cite[Theorem 2.1]{pang2022functional}. The solutions to these equations are deterministic and provide a complete description of the dynamics of the system.

\section{SIR process on a dynamic Erd\H{o}s-R\'enyi random graph}\label{MarkovSIRonrandomG}

In this section we consider a multi-type Markovian SIR process on a dynamic random graph and apply the results of Section~\ref{contempirical} to it.

Consider a population of $n$ vertices consists of $k$ types of individuals (the number of individuals of type~$i\in[k]$ is $n_i$, and $n=\sum_{i\in[k]}n_i$). Every edge between vertices of type~$i$ and vertices of type~$j$ is connected independently with probability $p_n(i,j)$, where $p_n(i,j)=\lambda_{n,i,j}/(\lambda_{n,i,j}+\mu_{n,i,j})$, $i$, $j\in[k]$; this model is also known as the \emph{Stochastic Block Model}. The graph changes by edges (between individuals of type~$i$ and $j$) appearing at rate~$\lambda_{n,i,j}$ and disappearing at rate~$\mu_{n,i,j}$, independently between any two vertices. Particularly, $\lambda_{n,i,j}=\lambda_{n,j,i}$ and $\mu_{n,i,j}=\mu_{n,j,i}$ for all $i$, $j\in[k]$. We assume the model starts in equilibrium state, and we randomly select a vertex of type~$\iota $ to be infectious and set this time as the initial time 0. Any infectious vertex, say $x$, of type~$i$ infects any of its connected neighbours of type~$j$ according to a Poisson point process $\varsigma_{n,x,i,j}$ whose intensity is $\beta_{n,i,j}$, and once infected, recovers (and becomes immune) at rate~$\gamma_i $. All the Poisson processes $\clc{\varsigma_{n,x,i,j},x\in[n_j],i,j\in[k]}$ are independent.

This model is over-parametrised, so in what follows, for $i$, $j\in[k]$, we keep the recovery rate~$\gamma_i $ constant as $n$ grows, but we allow 
parameters $\lambda_{n,i,j}$, $\mu_{n,i,j}$ and $\beta_{n,i,j}$ to change with $n$. In particular, to simplify the discussion somewhat, we assume  throughout that 
\be{
    \lambda_{n,i,j}=\lambda_{i,j} n_j^{\kappa_{\lambda_{i,j}}},\qquad
    \mu_{n,i,j}=\mu_{i,j} n_j^{\kappa_{\mu_{i,j}}},\qquad
    \beta_{n,i,j}=\beta_{i,j} n_j^{\kappa_{\beta_{i,j}}},
}
for some constants $\lambda_{i,j},\mu_{i,j},\beta_{i,j}\geq 0$ and some exponents $\kappa_{\lambda_{i,j}},\kappa_{\mu_{i,j}}\in\IR$ and $\kappa_{\beta_{i,j}}\leq 0$. 

Consider two individuals $x$ and $y$, where $x$ is from group $i$ and $y$ is from group $j$, and consider an independent $\{0,1\}$-valued Markov process (0 representing absence of the edge between $x$ and $y$, and 1 representing presence of the edge), started in equilibrium, that is, $\Be(p_n(i,j))$. Now, let $X^{\mathrm{e}}_{n,i,j}$ denote the arrival of the first point of the Poisson process $\varsigma_{n,i,j}$ for which simultaneously the corresponding Markov chain is in state 1. Let $X^{\mathrm{e}}_{n,i,j,1}, \ldots, X^{\mathrm{e}}_{n,i,j,n_j}$ denote independent and identically distributed copies of $X^{\mathrm{e}}_{n,i,j}$, for $i,j \in [k]$, and construct the processes defined in \eqref{superposition1} and \eqref{superposition2} as following
\bgn{\label{vary1}
   \xi_{n,i,j}= \sum_{l=1}^{n_j}\delta_{X^{\mathrm{e}}_{n,i,j,l}},
    \qquad
   \xi_{n,Q_i,i,j}= \sum_{l=1}^{n_j}\delta_{X^{\mathrm{e}}_{n,i,j,l}}(\cdot\cap[0,Q_{i}]),\\
   \label{vary2}
   \xi_{n,Q_i,i}= \bclr{\xi_{n,Q_i,i,1},\dots,\xi_{n,Q_i,i,k}},
}
where $Q_{i}$ is a random variable whose distribution is $\Exp(\gamma_i )$.
Denote the relative intensity measure of $\xi_{n,Q_i,i,j}$ by $G_{n,i,j}$; that is, for every Borel set $A\in \F{B}(\IR_+) $,
\be{
    G_{n,i,j}(A)=\frac{\IE \xi_{n,Q_i,i,j}(A)}{\bR_{0,n,i,j}}.
}
For $i$, $j\in[k]$, let
\be{
  \bR_{0,i,j}=\IE \xi_{Q_i,i,j}(\IR_+).
}

Before we introduce the main result of the present section, we will show the following lemma.
\begin{lemma}\label{Gxi}
For $i$, $j\in[k]$, suppose the parameters $\lambda_{i,j}$, $\mu_{i,j}$, $\beta_{i,j}$, $\gamma_i$ as well as  $\kappa_{\lambda_{i,j}}$, $\kappa_{\mu_{i,j}}$ and $\kappa_{\beta_{i,j}}$ are given in Table~\ref{singletable} (if $k=1$) and Table~\ref{table:constraints} (if $k>1$), then the vector of randomly truncated point processes $\xi_{n,Q_i,i}$ convergences to a vector of point processes $\xi_{Q_i,i}=\bclr{\xi_{Q_i,i,1},\dots,\xi_{Q_i,i,k}}$ in total variation distance, and the intensity measure $G_{n,i,j}$ also convergences to the measure $G_{i,j}$ in total variation distance. 
\end{lemma}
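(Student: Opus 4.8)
The plan is to reduce everything to an explicit description of the single-edge first-contact time $X^{\mathrm{e}}_{n,i,j}$ and then to invoke a Poisson approximation for the superposition of its $n_j$ independent copies. First I would determine the law of $X^{\mathrm{e}}_{n,i,j}$ by coupling the thinning Poisson process $\varsigma_{n,i,j}$ with the two-state edge chain. Writing $a_n(t)$ and $b_n(t)$ for the probabilities that no valid contact has occurred in $[0,t]$ and that the edge is, respectively, present or absent at time $t$ (started from equilibrium, so $a_n(0)=p_n(i,j)$ and $b_n(0)=1-p_n(i,j)$), these solve the linear system
\be{
  a_n'=-(\beta_{n,i,j}+\mu_{n,i,j})a_n+\lambda_{n,i,j}b_n,\qquad b_n'=\mu_{n,i,j}a_n-\lambda_{n,i,j}b_n.
}
Since a valid contact can only be emitted while the edge is present, the survival function is $F^c_{X^{\mathrm{e}}_{n,i,j}}(t)=a_n(t)+b_n(t)$, and because $a_n'+b_n'=-\beta_{n,i,j}a_n$ the density is simply $f_{X^{\mathrm{e}}_{n,i,j}}(t)=\beta_{n,i,j}a_n(t)$. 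This $2\times2$ system is solved in closed form, which gives explicit expressions for $f_{X^{\mathrm{e}}_{n,i,j}}$ and hence for the mean measure of the single-copy truncated process.

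Next I would extract the limiting intensity. The governing quantity is $n_j f_{X^{\mathrm{e}}_{n,i,j}}(t)=n_j\beta_{n,i,j}a_n(t)$; substituting $\lambda_{n,i,j}=\lambda_{i,j}n_j^{\kappa_{\lambda}}$, $\mu_{n,i,j}=\mu_{i,j}n_j^{\kappa_{\mu}}$, $\beta_{n,i,j}=\beta_{i,j}n_j^{\kappa_{\beta}}$ and letting $n\to\infty$ in each regime of Table~\ref{singletable} (if $k=1$) and Table~\ref{table:constraints} (if $k>1$), I expect $n_j\beta_{n,i,j}a_n(t)$ to converge, pointwise in $t$, to the density of the measure $\Lambda_{t,i,j}$ of Assumption~\ref{Lambda}; this simultaneously verifies the existence required in that assumption. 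In the critical case~(6b), where $\kappa_{\mu}=0$, $\kappa_{\lambda}=-1$, $\kappa_{\beta}=0$, the limit retains two contributions, one from edges already present at the moment of infection (a term $\propto e^{-(\beta+\mu)t}$ arising from $a_n(0)=p_n(i,j)\sim\lambda/(n_j\mu)$) and one from edges appearing afterwards (a source term $\lambda_{n,i,j}b_n\sim\lambda/n_j$), matching the $\bsl_{\mathrm{c}}/\bsl_{\mathrm{d}}$ decomposition. Integrating the mean measure against $Q_i\sim\Exp(\gamma_i)$ exactly as in the preceding non-Markovian example produces $\bR_{0,n,i,j}G_{n,i,j}$; pointwise convergence of the densities together with convergence of their total masses then upgrades to $L^1$, i.e.\ total-variation, convergence via Scheff\'e's lemma, yielding both $\bR_{0,n,i,j}\to\bR_{0,i,j}$ and $G_{n,i,j}\to G_{i,j}$ in total variation.

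For the point-process statement I would condition on $Q_i=x$. Each copy contributes a point in $[0,x]$ independently with probability $F_{X^{\mathrm{e}}_{n,i,j}}(x)$, so $\xi_{n,Q_i,i,j}$ is conditionally a binomial process with $n_j$ points placed according to $f_{X^{\mathrm{e}}_{n,i,j}}/F_{X^{\mathrm{e}}_{n,i,j}}(x)$ on $[0,x]$. In every regime of the tables the per-edge success probability obeys $F_{X^{\mathrm{e}}_{n,i,j}}(x)=\bigo(1/n_j)$ (the scaling is chosen precisely so that $\bR_{0,i,j}$ is of order one), so a Le\,Cam/Stein estimate bounds the total-variation distance between this binomial process and the Poisson process on $[0,x]$ with intensity $n_j f_{X^{\mathrm{e}}_{n,i,j}}$ by $\dtv(\Bi(n_j,F_{X^{\mathrm{e}}_{n,i,j}}(x)),\Po(n_j F_{X^{\mathrm{e}}_{n,i,j}}(x)))=\bigo(1/n_j)$. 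Combining this with the $L^1$-convergence of the intensity $n_j f_{X^{\mathrm{e}}_{n,i,j}}\to\Lambda_{x,i,j}$ from the previous step shows $\xi_{n,x,i,j}\to\xi_{x,i,j}$ in total variation, the limit being the Poisson process with intensity $\Lambda_{x,i,j}$.

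Finally I would assemble the vector and uncondition. Conditionally on $Q_i$ the $k$ components are built from independent Poisson clocks and edge chains, hence are independent, so the conditional distance between the laws of $\xi_{n,Q_i,i}$ and $\xi_{Q_i,i}$ is at most $\sum_{j\in[k]}\dtv(\law(\xi_{n,x,i,j}),\law(\xi_{x,i,j}))$ by subadditivity of total variation over product measures. Unconditioning through $\dtv(\law(\xi_{n,Q_i,i}),\law(\xi_{Q_i,i}))\le\IE\,\dtv(\,\cdot\mid Q_i)$, the integrand is bounded by $1$ and converges to $0$ for Lebesgue-almost every $x$, so dominated convergence gives the claim; the limiting vector is the Cox process directed by $\Lambda_{Q_i,i,1}\otimes\cdots\otimes\Lambda_{Q_i,i,k}$, as required. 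The main obstacle is the second step: extracting the correct limiting intensity from the explicit solution of the edge system simultaneously across all scaling regimes---most delicately in the critical regime~(6b), where edge and contact dynamics share a time scale and the limit is a genuinely inhomogeneous intensity---and doing so strongly enough, namely in $L^1$ and uniformly enough in the truncation level, to survive both Scheff\'e's lemma and the expectation over $Q_i$.
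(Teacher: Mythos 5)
Your proposal is correct and follows the same overall architecture as the paper's proof: an explicit two-exponential description of the single-edge first-contact time $X^{\mathrm{e}}_{n,i,j}$, a binomial-to-Poisson approximation of the conditional (on $Q_i=T$) truncated empirical process with error controlled by $F_{X^{\mathrm{e}}_{n,i,j}}([0,T])=\bigo(1/n_j)$, a bound on the distance between the approximating Poisson/Cox processes by the total variation distance of their mean measures, and finally integration over the law of $Q_i$. Two implementation choices differ. First, you derive the density of $X^{\mathrm{e}}_{n,i,j}$ by solving the forward equations for the joint survival/edge-state probabilities; the paper instead quotes Kuczura's interarrival density for the interrupted Poisson process and applies the equilibrium excess-lifetime transform. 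Your $2\times2$ system has eigenvalues $-r_{1,n,i,j}$, $-r_{2,n,i,j}$ (roots of $x^2-(\beta_{n,i,j}+\lambda_{n,i,j}+\mu_{n,i,j})x+\beta_{n,i,j}\lambda_{n,i,j}=0$), so the two routes give the same mixture of exponentials; yours is arguably more self-contained. Second, you pass from pointwise convergence of the rescaled densities to total-variation convergence via Scheff\'e's lemma, whereas the paper computes the total variation distances $d_{i,j}=\norm{\bR_{0,n,i,j}G_{n,i,j}-\bR_{0,i,j}G_{i,j}}_{\mathrm{TV}}$ explicitly in each of the nine regimes and records their rates. For the lemma as stated (mere convergence) Scheff\'e suffices, but be aware that the quantitative rates $\lito(n^{-1/3})$ and $\lito(n^{-17/24})$ are exactly what is needed later to verify Assumption~\ref{empirical}(ii)--(iii) and hence to invoke Theorem~\ref{contactempiricaltheorem}; your route would have to be supplemented by the explicit case-by-case estimates at that point anyway.
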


The vectors $\clc{\xi_{Q_i,i}}_{i\in[k]}$ can be derived in the same way as the vectors of Cox processes, $\clc{\xi_{Q_i,i}}_{i\in[k]}$, in Section~\ref{contempirical}.

We will see in the later section that the measure $G_{i,j}$ is absolutely continuous with respect to the Lebesgue measure and the density has the form \be{
  g_{i,j}(t)=f_{\xi_{Q_i,i,j}}(t)e^{-\gamma_i t}.
} 
We call $G_{i,j}$ \emph{homogeneous} if $f_{\xi_{Q_i,i,j}}(t)$ is constant, and we call it \emph{non-homogeneous} otherwise.

In addition, we make the following assumption.
 
\begin{assumption}\label{Mi,j2} 
The vectors of processes $\clc{\xi_i}_{i\in[k]}$ satisfy Assumption~\ref{dTVxi}.
\end{assumption}

\subsection{Large population limit}

For this multi-type Markovian SIR process on a dynamic random graph, we will show that it can be analyzed using the results of Sections~\ref{general} and~\ref{contempirical}. Specially, in the forward branching process  $\C{B}^{\iota\prime}$, individuals of type~$i$ with an exponential distributed $\Exp(\gamma_i )$ life length give births of type~$j$ according to the point process $\xi_{Q_i,i,j}$. In the multi-type backward branching process (starting form an individual of type~$\nu$) $\widehat{\C{B}}^{\nu\prime}$, individuals of type~$j$ give births of type~$i$ according to independent Poisson point processes with intensity $p_i\bR_{0,i,j}G_{i,j}(da)/p_j$.

The time $t_n(u)$, the random time $\tau_n$ and the event $A_n$ are defined in the same way as those in Section~\ref{general}.

\begin{theorem}\label{dynamicmodel}
Under Assumptions~\ref{initial} and~\ref{Mi,j2}, and with the parameters given in Table~\ref{table:constraints}, there exists a coupling between $\C{B}^{\iota}_n$ and $\C{B}^{\iota\prime}$ such that the following holds. For any $\varepsilon>0$, $T\in\IR$, and $\nu\in[k]$, the epidemic ratios $\bss^{\iota}_{\nu}$, $\bsi^{\iota}_{\nu}$ and $\bsr^{\iota}_{\nu}$ satisfy \eq{maintheoremP}, \eq{FLLNir} and \eq{FLLNr}, as well as
the ordinary differential equations
\ba{
  \frac{d}{dt}\bss^{\iota}_{ \nu}(t)
  &=-\bss^{\iota}_{\nu}(t)\bbbbcls{\sum_{i\in K^{\nu}_{\mathrm{h}}}\frac{p_i}{p_{\nu}}\bbbclr{\bR_{0,i,\nu}\gamma_i \bsi^{\iota}_{i}}
  +\sum_{u\in K^{\nu}_{\mathrm{nh}}}\frac{p_u}{p_{\nu}}\beta_{u, \nu}\clr{\bsl^{\iota}_{\mathrm{c}, u, \nu}(t)+ \bsl^{\iota}_{\mathrm{d}, u, \nu}(t)}},\\
  \frac{d}{dt}\bsi^{\iota}_{ \nu}(t)
  &=\bss^{\iota}_{\nu}(t)\bbbbcls{\sum_{i\in K^{\nu}_{\mathrm{h}}}\frac{p_i}{p_{\nu}}\bbbclr{\bR_{0,i,\nu}\gamma_i \bsi_{i}}+\sum_{u\in K^{\nu}_{\mathrm{nh}}}\frac{p_u}{p_{\nu}}\beta_{u, \nu}\clr{\bsl^{\iota}_{\mathrm{c}, u, \nu}(t)+ \bsl^{\iota}_{\mathrm{d}, u, \nu}(t)}-\gamma_{\nu}\bsi^{\iota}_{\nu}(t)},\\
    \frac{d}{dt}\bsr^{\iota}_{ \nu}(t)
    &=\gamma_{\nu}\bsi^{\iota}_{\nu}(t),\\
  \frac{d}{dt}\bsl^{\iota}_{\mathrm{c}, u, \nu}(t)
  &=\bss^{\iota}_{u}(t)\bbbbcls{\sum_{i\in K^u_\mathrm{h}}\frac{p_i}{p_{\nu}}\bbbclr{\bR_{0,i,u}\gamma_i \bsi_{i}}+\sum_{h\in K^u_\mathrm{nh}}\frac{p_h}{p_{\nu}}\frac{\lambda_{hu}}{\mu_{hu}}\beta_{h\nu}\clr{\bsl^{\iota}_{\mathrm{c},h,\nu}(t)+ \bsl^{\iota}_{\mathrm{d},h,\nu}(t)}} \\
  &\quad- (\mu_{uj}+\beta_{u, \nu}+\gamma_u ) \bsl^{\iota}_{\mathrm{c}, u, \nu}(t),\\
  \frac{d}{dt}\bsl^{\iota}_{\mathrm{d}, u, \nu}(t)
  &=\mu_{u, \nu} \bsl^{\iota}_{\mathrm{c}, u,    \nu}(t)-\gamma_u  \bsl^{\iota}_{\mathrm{d}, u, \nu}(t),
}
where
\ba{
         K^{\nu}_{\mathrm{h}}&=\clc{i \in [k]:\text{$G_{i,\nu}$ is homogeneous}}, \\
         K^{\nu}_{\mathrm{nh}}&=\clc{u\in [k]:\text{$G_{u,\nu}$ is non-homogeneous}}.
}
The initial conditions for the above system of equations are
\be{
  \bss^{\iota}_{j}(-\infty)=1,\qquad\bsi^{\iota}_{\nu}(-\infty)=0,\qquad\bsr^{\iota}_{\nu}(-\infty)=0,
}
and
\be{
   \lim_{t\rightarrow-\infty}e^{-\widehat{\C{M}}t}\clr{\bss^{\iota}_{\nu}(t)}'=-m_*\widehat{\C{M}}\IE\widehat{W}^{\nu},
}
and in addition,
\be{
  \bsl^{\iota}_{\mathrm{c}, u, 
\nu}(-\infty)=0,
\qquad\bsl^{\iota}_{\mathrm{d}, u, 
\nu}(-\infty)=0.
}
\end{theorem}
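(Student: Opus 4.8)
The plan is to obtain the statement as a specialization of Theorem~\ref{contactempiricaltheorem}, followed by an explicit reduction of the integral equations~\eqref{IE} to the stated ODE system. First I would verify that the empirical contact processes $\xi_{n,i,j}$ of~\eqref{vary1}--\eqref{vary2}, built from the first-infectious-contact-through-an-active-edge times $X^{\mathrm{e}}_{n,i,j}$, meet the hypotheses of Section~\ref{contempirical}, namely Assumptions~\ref{Lambda} and~\ref{empirical}. Part~$(i)$ of Assumption~\ref{empirical} is exactly Assumption~\ref{Mi,j2}. For parts~$(ii)$ and~$(iii)$ and for Assumption~\ref{Lambda}, I would upgrade the qualitative total-variation convergence of $\xi_{n,Q_i,i}$ to $\xi_{Q_i,i}$ and of $\bR_{0,n,i,j}G_{n,i,j}$ to $\bR_{0,i,j}G_{i,j}$ supplied by Lemma~\ref{Gxi} to the quantitative rates $\lito(n^{-17/24})$ and $\lito(n^{-1/3})$, by bounding the total-variation distance between the truncated binomial contact process and its Cox/Poisson approximation in each regime of Table~\ref{table:constraints}. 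Granting this, Theorem~\ref{contactempiricaltheorem} immediately yields~\eqref{maintheoremP}, \eqref{FLLNir}, \eqref{FLLNr} and the integral equations~\eqref{IE}, so that only the reduction of~\eqref{IE} to ODEs remains.

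The second step is to compute the limiting kernels $G_{i,\nu}$. By Lemma~\ref{Gxi} and the remark following it, $G_{i,\nu}$ is absolutely continuous with density $g_{i,\nu}(t)=f_{\xi_{Q_i,i,\nu}}(t)e^{-\gamma_i t}$, where $f_{\xi_{Q_i,i,\nu}}$ is the limiting contact intensity produced by the two-state edge Markov chain (edges forming at rate $\lambda_{n,i,\nu}$, dissolving at rate $\mu_{n,i,\nu}$, started in equilibrium) together with rate-$\beta_{n,i,\nu}$ transmission across a present edge. In every regime of Table~\ref{table:constraints} except the degenerate one, the rescaling collapses the edge transient and leaves $f_{\xi_{Q_i,i,\nu}}$ constant, so $G_{i,\nu}(du)\propto\gamma_i e^{-\gamma_i u}\,du$ is purely exponential and $i\in K^\nu_{\mathrm{h}}$. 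In the remaining (case-(6b)-type) regime, decomposing a tagged edge according to whether it is present at the infection time or appears afterwards shows that $f_{\xi_{Q_i,i,\nu}}$ is a constant plus a genuine $e^{-(\mu_{i,\nu}+\beta_{i,\nu})t}$ transient, so $G_{i,\nu}$ mixes the two rates $\gamma_i$ and $\mu_{i,\nu}+\beta_{i,\nu}+\gamma_i$; this is the non-homogeneous case $i\in K^\nu_{\mathrm{nh}}$.

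With the kernels in hand, I would reduce~\eqref{IE} to the ODE system mode by mode, as for the Markovian example leading to~\eqref{SIRODE}. For a homogeneous index $i\in K^\nu_{\mathrm{h}}$ the exponential kernel $\gamma_i e^{-\gamma_i u}$ lets me differentiate the convolution $\int_0^\infty(1-\bss^\iota_i(t-u))G_{i,\nu}(du)$ in $t$ and recover the term $\tfrac{p_i}{p_\nu}\bR_{0,i,\nu}\gamma_i\bsi^\iota_i$. For a non-homogeneous index $u\in K^\nu_{\mathrm{nh}}$ a single differentiation fails to close the equation because of the extra exponential mode; here I would introduce the two auxiliary functions $\bsl^\iota_{\mathrm{c},u,\nu}$ and $\bsl^\iota_{\mathrm{d},u,\nu}$ matching the two physical sources of active edges --- those present at infection and those appearing afterwards, as in the heuristic around~\eqref{singleSLLR2}--\eqref{singleSLLR4} --- so that their combination reconstructs the convolution of $1-\bss^\iota_u$ against $g_{u,\nu}$. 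Differentiating each auxiliary function then produces the linear feedback terms and the decay rates $\mu_{u,\nu}+\beta_{u,\nu}+\gamma_u$ and $\gamma_u$ in the $\bsl$-equations; assembling all modes gives the stated system. The initial conditions $\bss^\iota_\nu(-\infty)=1$, $\bsi^\iota_\nu(-\infty)=\bsr^\iota_\nu(-\infty)=0$ and $\bsl^\iota_{\mathrm{c},u,\nu}(-\infty)=\bsl^\iota_{\mathrm{d},u,\nu}(-\infty)=0$ follow from those of~\eqref{IE} and from the convolutions vanishing at $-\infty$, while the normalization of $(\bss^\iota_\nu)'$ at $-\infty$ is inherited from Theorem~\ref{IEmodels}, and uniqueness is inherited from the unique solvability of~\eqref{IE}.

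The main obstacle is the non-homogeneous case: computing $g_{u,\nu}$ precisely from the coupled edge/transmission dynamics and, above all, identifying the correct pair $\bsl^\iota_{\mathrm{c},u,\nu},\bsl^\iota_{\mathrm{d},u,\nu}$ so that the system closes with no further unknowns. The delicate point is the accounting behind the constraint analogous to~\eqref{singleSLLR4}: one must track that the edges emanating from an infected vertex split into those present at infection, those appearing afterwards, and those that have ceased to participate, and verify that this bookkeeping is captured entirely by the two exponential modes --- so that, surprisingly, no quantities associated with higher-order local configurations survive in the limit. Checking the size-bias contribution $\lambda/\mu$ present at the infection time and the linear-in-$\bsl$ solvability of the resulting system would complete the argument.
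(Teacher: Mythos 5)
Your plan follows the paper's own route almost step for step: reduce to Theorem~\ref{contactempiricaltheorem} by verifying Assumptions~\ref{Lambda} and~\ref{empirical} with quantitative total-variation rates in each regime of Table~\ref{table:constraints}, compute the limiting kernels $G_{i,\nu}$ and classify them as homogeneous or non-homogeneous, and then close the integral equations~\eqref{IE} into ODEs by introducing $\bsl^{\iota}_{\mathrm{c},u,\nu}$ and $\bsl^{\iota}_{\mathrm{d},u,\nu}$ as the convolutions of $(\bss^{\iota}_u)'$ against the two exponential modes of the non-homogeneous kernel. That is exactly the paper's decomposition, including the identification of the density $g_{u,\nu}(t)$ as a mixture of $e^{-(\mu_{u,\nu}+\beta_{u,\nu}+\gamma_u)t}$ and $e^{-\gamma_u t}$ terms and the mode-by-mode differentiation that yields the decay rates $\mu_{u,\nu}+\beta_{u,\nu}+\gamma_u$ and $\gamma_u$.

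One step is glossed over that the paper treats as a separate lemma and that your reduction cannot do without. Theorem~\ref{dynamicmodel} concerns the SIR epidemic on the dynamic graph itself (the paper's ``Model 1''), in which an infective makes repeated contact attempts with each neighbour through an interrupted Poisson process, and in which, after a successful infection along an edge, that edge is \emph{known} to be present, conditioning its future evolution. Before you may invoke Theorem~\ref{contactempiricaltheorem}, you must show that the infection counts of this process agree in law with those of the surrogate model driven by the empirical point processes $\xi_{n,i,j}=\sum_l\delta_{X^{\mathrm{e}}_{n,i,j,l}}$ of \eqref{vary1}--\eqref{vary2}, i.e.\ that (a) only the first successful contact per ordered pair matters, (b) the per-neighbour first-success times are i.i.d.\ with the equilibrium excess-lifetime law $f_{X^{\mathrm{e}}_{n,i,j}}$ of \eqref{Xe}, and (c) resetting the edge state to equilibrium after an infection does not alter the law of the epidemic because the newly infected endpoint no longer cares about that edge. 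The paper isolates this as the Model 1 = Model 2 = Model 3 lemma at the start of Section~\ref{proof2}; the argument is short but it is the hinge on which the entire specialization turns, so your write-up should state and prove it rather than start from the surrogate processes as if they were the model. Apart from this, the remaining work you defer --- Kuczura's explicit form of the interarrival density, the nine-case computation of $\bR_{0,i,j}$, $G_{i,j}$ and the total-variation rates $d_{i,j}$, and the bookkeeping identity behind \eqref{singleSLLR4} --- is tedious but goes through exactly as you describe.
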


\subsection{The basic reproduction number \texorpdfstring{$\bR_0$}{Lg}, final size, and  maximum ratio of infected population \texorpdfstring{$i_{\max}$}{Lg}}\label{analysis}

The limiting reproduction ratios are listed in Tables~\ref{singletable} and~\ref{table:constraints} and are a result of calculating $\bR_{0,i,j} = \lim_{n\to\infty} \bR_{0,n,i,j}$; the final size equations can be derived from \eqref{IE} by letting $t\rightarrow\infty$ and solving 
\ben{\label{finalsize}
    -\log \bss^{\iota}_{j}(\infty)=\sum_{i\in[k]}\bR_{0,i,j}(1-\bss^{\iota}_{i}(\infty)),\quad j\in[k],
}
The relation between the parameters and the final size of the epidemic can be extracted numerically from equations \eqref{finalsize}.

Let $t_{\max}$ be the time at which the infected curve $\bsi$ reaches its maximum value. We are now interested in the quantity $\bsi_{\max}=\bsi(t_{\max})$. In the homogenous cases, a classical result for equations \eqref{deterministicSIR} is that 
\be{
    \bsi(t)+\bss(t)-\frac{1}{\bR_{0}}\ln \bss(t)\equiv \text{constant},\qquad t\in \IR.
}
Letting $t\rightarrow-\infty$ we know that the constant is 1. Since we know $\bsi_{\max}$ occurs when $\frac{d\bsi}{dt}=0$ and when $s=\frac{1}{\bR_{0}}$, we can rearrange and substitute the above equation to find $\bsi_{\max}$, the maximum
ratio of individuals we expect to be infected at any point in the course of the epidemic. Thus we obtain that
\be{
          \bsi_{\max}+\frac{1}{\bR_{0}}-\frac{1}{\bR_{0}}\ln \frac{1}{\bR_{0}}=1;
 }
that is,
\be{
          \bsi_{\max}=1-\frac{1}{\bR_{0}}+\frac{1}{\bR_{0}}\ln \frac{1}{\bR_{0}}.
}
In the non-homogeneous case, we use \eqref{singleSLLR}. Since
\be{
  \bsl_{\mathrm{d}}(t)= \frac{\lambda}{\mu+\beta} \bsi(t)-\frac{\mu}{\mu+\beta}\bsl_{\mathrm{c}}(t),
}
and
\be{
  \bsl_{\mathrm{c}}(t)\leq \frac{\lambda}{\mu}\bsi(t),
}
we know that
\be{
        \frac{\lambda\beta}{\mu+\beta}\bsi(t) \bss(t)-\gamma \bsi(t)\leq \bsi'(t)\leq  \frac{\lambda\beta}{\mu}\bsi(t) \bss(t)-\gamma \bsi(t).
}
Let $t_1$ be the time point at which
\be{
  \bss(t_1)=\frac{(\mu+\beta)\gamma}{\lambda\beta}
}
and $t_2$ be the time point at which
\be{
  \bss(t_2)=\frac{\mu\gamma}{\lambda\beta},
}
then 
\be{
  t_1\leq t_{\max}\leq t_2.
}
The quantities $\frac{(\mu+\beta)\gamma}{\lambda\beta}$ and $\frac{\mu\gamma}{\lambda\beta}$ are thresholds points, the times for which represent specific times before and after the peak of the epidemic.
These thresholds are meaningful only when they are smaller than 1, although the smaller one is always valid in our framework. Indeed, the parameters are selected to make the expected number of infections  $\bR_{0}$ made by an individual to be greater than 1, and thus
\be{
   \frac{\mu\gamma}{\lambda\beta}=\frac{\mu+\gamma}{\bR_{0}(\mu+\beta+\gamma)}<1.
}

\section{Proof of Theorem~\ref{maintheorem}}\label{proof1}
Throughout we assume that all random variables and processes are defined on a suitable
rich enough probability space, which we do not specify.

\subsection{Epidemic process \texorpdfstring{$\C{B}_n^{\iota}$}{Lg} and forward branching process \texorpdfstring{$\C{B}^{\iota\prime}$}{Lg}} 
The epidemic process $\C{B}_n^{\iota}$ can be coupled to a branching process $\C{B}^{\iota\prime}$ successfully during early stage, see \cite[Theorem 2.1]{ball1995strong}. The key difference between the epidemic process and the forward branching process lies in the fact that the former does not increase the number of infected individuals if an infected individual is contacted again by another infected vertex. Conversely, ``repeat contacts'' in the branching process lead to new births, thereby increasing the number of offspring. To better distinguish between these two processes, we assign labels in chronological order to individuals of type~$i\in[k]$ born in the branching process, drawing them independently and at random from $\{(i,1), (i,2), \ldots, (i,n_i)\}$, and we mark individuals and all of their descendants as ``ghosts'' if their label has been used before. In this way, we can identify individuals in the epidemic process as those who are not ghosts in the branching process.

Let $L(t)$ denote the number of times that a label has
been used before, creating an initial ghost, and let $L_{+}(t)\geq L(t)$ denote the number of initial ghosts and their descendants by time $t$. By the Assumption~\ref{dTVxi}, we write $e_{W}=\max_{i\in[k]}\sup_t\IE\clc{|B^{i\prime}(t)|e^{-\C{M}t}}<\infty$ and $W^{\iota}(t)=|B^{\iota\prime}\clr{t}|e^{-\C{M}t}$. 
\begin{lemma}\label{L+17/24}
Under Assumption~\ref{initial} and~\ref{dTVxi}, 
\begin{enumerate}[label=(\roman*)]
\item $\IP\bcls{\clc{L_{+}\clr{\tau_n}\geq n^{11/24}}\cap \clc{W^{\iota}\clr{\tau_n} \geq n^{-1/48}}}= \bigo(n^{-1/48}\log n)$;
\item $\lim_{n\to\infty}\IP\cls{A_n}>0$.
\end{enumerate}
\end{lemma}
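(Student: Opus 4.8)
The plan is to prove part~(i) by a first-moment (Markov) bound on the number of ghost individuals and then to read off part~(ii) from it together with the supercriticality of $\C{B}^{\iota\prime}$. The starting point is the bookkeeping identity that, since the epidemic individuals are precisely the non-ghosts of the coupling, $|B^{\iota\prime}(\tau_n)|-|B_n^{\iota}(\tau_n)|=L_+(\tau_n)$. The upper bound $|B_n^{\iota}(\tau_n)|\leq|B^{\iota\prime}(\tau_n)|$ in the definition of $A_n$ is automatic, and the lower bound is exactly $\clc{L_+(\tau_n)\leq n^{11/24}}$, so $A_n=\clc{W^{\iota}(\tau_n)\geq n^{-1/48}}\cap\clc{L_+(\tau_n)\leq n^{11/24}}$. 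Consequently
\be{
  \IP\cls{A_n}\geq\IP\bcls{W^{\iota}(\tau_n)\geq n^{-1/48}}-\IP\bbcls{\clc{L_+(\tau_n)> n^{11/24}}\cap\clc{W^{\iota}(\tau_n)\geq n^{-1/48}}}.
}
The subtracted term tends to $0$ by part~(i). For the first term, on survival $\tau_n\to\infty$, so $W^{\iota}(\tau_n)=\floor{n^{17/24}}e^{-\C{M}\tau_n}\to W^{\iota}$, and $W^{\iota}>0$ almost surely on survival (Kesten--Stigum, available under the second-moment bound in Assumption~\ref{dTVxi}$(i)$); since $n^{-1/48}\to0$, we get $\IP\bcls{W^{\iota}(\tau_n)\geq n^{-1/48}}\to\IP\cls{\text{survival}}>0$, positivity coming from the largest eigenvalue of $M$ exceeding $1$. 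Hence $\liminf_n\IP\cls{A_n}>0$, which is~(ii).

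For part~(i), I would condition on the $\sigma$-field $\C{F}$ generated by the genealogy and birth times of $\C{B}^{\iota\prime}$; given $\C{F}$, the labels are independent and uniform within each type. An individual $v$ of type~$i$ born at time $s_v$ is a fresh duplicate with conditional probability at most $|B^{\iota\prime}_i(s_v)|/n_i\leq|B^{\iota\prime}(s_v)|/\min_j n_j$, and because every descendant of a ghost is a ghost, writing $D_v(\tau_n)$ for the number of descendants of $v$ (including $v$) born by $\tau_n$, we obtain the over-counting bound $L_+(\tau_n)\leq\sum_{v:\,s_v\leq\tau_n}\I\cls{v\text{ is a duplicate}}\,D_v(\tau_n)$. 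Averaging over the labels yields
\be{
  \IE\bcls{L_+(\tau_n)\given\C{F}}\leq\frac{1}{\min_j n_j}\sum_{v:\,s_v\leq\tau_n}|B^{\iota\prime}(s_v)|\,D_v(\tau_n).
}

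It then remains to estimate this sum on $\clc{W^{\iota}(\tau_n)\geq n^{-1/48}}$. Using the mean-growth bound $\IE D_v(\tau_n)\lesssim e^{\C{M}(\tau_n-s_v)}$ for a subtree founded at $s_v$, together with $|B^{\iota\prime}(s)|\asymp W^{\iota}e^{\C{M}s}$, each summand is of order $W^{\iota}e^{\C{M}\tau_n}$, and summing against the birth intensity over $[0,\tau_n]$ produces a total of order $(W^{\iota})^2e^{2\C{M}\tau_n}=|B^{\iota\prime}(\tau_n)|^2$. The event $\clc{W^{\iota}(\tau_n)\geq n^{-1/48}}$ enters decisively here: it forces $e^{\C{M}\tau_n}\leq\floor{n^{17/24}}n^{1/48}$ and hence $\tau_n\leq\tfrac{35}{48\C{M}}\log n$, which both legitimises the growth estimates and bounds the maximal generation present by $O(\log n)$; feeding the crude cap on $e^{\C{M}\tau_n}$ into the early, deep-lineage part of the sum costs the extra factor $n^{1/48}\log n$. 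A short computation then gives $\IE\bcls{L_+(\tau_n)\,;\,W^{\iota}(\tau_n)\geq n^{-1/48}}=\bigo(n^{21/48}\log n)$, and dividing by $n^{11/24}=n^{22/48}$ via Markov's inequality yields the claimed $\bigo(n^{-1/48}\log n)$.

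The main obstacle is the intertwined randomness: $\tau_n$ is a size-based stopping time of the very tree whose subtree sizes $D_v(\tau_n)$, aggregate growth $|B^{\iota\prime}(s)|$, and martingale limit $W^{\iota}$ all appear simultaneously in the estimate. Turning the heuristics $|B^{\iota\prime}(s)|\asymp W^{\iota}e^{\C{M}s}$ and $\IE D_v(\tau_n)\lesssim e^{\C{M}(\tau_n-s_v)}$ into rigorous, uniform bounds --- in particular controlling deep lineages and the fluctuations of $|B^{\iota\prime}(s)|e^{-\C{M}s}$ around $W^{\iota}$ across the whole interval $[0,\tau_n]$ --- is the delicate step. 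I would handle it using the uniform $L_1$ bound $e_W=\sup_t\IE\clc{|B^{\iota\prime}(t)|e^{-\C{M}t}}<\infty$, the time cap $\tau_n\leq\tfrac{35}{48\C{M}}\log n$ supplied by the conditioning event, and a split of the lineage sum by generation.
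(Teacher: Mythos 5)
Your proposal is correct and follows essentially the same route as the paper: part~(i) is a first-moment bound on the ghost count, pairing the duplicate probability $\bigo(n^{-7/24})$ with the expected descendant count $e_W e^{\C{M}(t-\cdot)}$ and applying Markov's inequality at level $n^{11/24}$, where the conditioning event supplies the cap $\tau_n\leq\tfrac{35}{48\C{M}}\log n$; part~(ii) then follows from (i) together with $W^{\iota}(\tau_n)\to W^{\iota}>0$ on survival. The ``delicate step'' you flag (the random stopping time entering all the growth estimates) is handled in the paper exactly as you suggest, by working with $L_+(\tau_n\wedge t_n^{\alpha})$ for the deterministic time $t_n^{\alpha}=\tfrac{35}{48\C{M}}\log n$, which agrees with $L_+(\tau_n)$ on the event $\{W^{\iota}(\tau_n)\geq n^{-1/48}\}$.
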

\begin{proof}
The first result is proved in much the same way as the proof of \cite[Lemma 2.7]{barbour2013approximating}. Denote $p^*=\min\clc{p_i,i\in[k]}$. For any of the first $\lfloor n^{17/24}\rfloor$ indices chosen, the probability that it is a repeat
of an index chosen earlier is at most $\lfloor n^{17/24}\rfloor/(\min\clc{n_i,i\in[k]})\sim (p^*)^{-1}n^{-7/24}$. For any given $\alpha$, let $t^{\alpha}_n=\frac{\alpha}{\C{M}}\log n$. Note that   at time $t^{\alpha}_n$, the expected number of descendants of an individual born at $t$ is at most $e_{W}e^{\C{M}(t^{\alpha}_n-t)}$. Hence, 
\bes{
    \mel\IE\clc{L_{+}\clr{\tau_n\wedge t^{\alpha}_n}}\\
    &\leq (p^*)^{-1}n^{-7/24}\IE\bbbclc{\int_0^{t^{\alpha}_n} e_{W} e^{\C{M}(t^{\alpha}_n-t)} |B^{\iota\prime}|(dt)}\\
    &\leq (p^*)^{-1}n^{-7/24} e_{W} e^{\C{M} t^{\alpha}_n}\IE\bbbclc{e^{-\C{M} t^{\alpha}_n}|B^{\iota\prime}(t^{\alpha}_n)|+\C{M}\int_0^{t^{\alpha}_n} e^{-\C{M}t}|B^{\iota\prime}(t)|dt}\\
    &\leq (p^*)^{-1}n^{-7/24} e^2_{W} e^{\C{M} t^{\alpha}_n} (1+\C{M} t^{\alpha}_n).
}
Thus, choosing $\alpha=(17+\varepsilon)/24$, we obtain
\bes{
     \mel n^{11/24}\IP\cls{L_{+}\clr{\tau_n\wedge t^{\alpha}_n}\geq n^{11/24}}\\
      &\leq
      \IE\clc{L_{+}\clr{\tau_n\wedge t^{\alpha}_n}}\\
      &\leq (p^*)^{-1}n^{(10+\varepsilon)/24} e^2_{W}  \bbclr{1+\frac{17+\varepsilon}{24}\log n},
}
and thus
\be{
     \IP\bcls{L_{+}\clr{\tau_n\wedge t^{\alpha}_n}
     \geq n^{11/24}}
     \leq(p^*)^{-1}n^{\frac{\varepsilon-1}{24}} e^2_{W}  \bbclr{1+\frac{17+\varepsilon}{24}\log n}.
}
When $W^{\iota}\clr{\tau_n} \geq n^{-\varepsilon/24}$, that is, when 
\be{
    |B^{\iota\prime}\clr{\tau_n}|e^{-\C{M}\tau_n}= \lfloor n^{17/24}\rfloor e^{-\C{M}\tau_n}\geq n^{-\varepsilon/24},
}
we have
\be{
    n^{17/24}  e^{-\C{M}\tau_n}\geq n^{-\varepsilon/24},
    \qquad
   \tau_n \leq\frac{\alpha}{\C{M}}\log n=t_n^{\alpha}.
}
The proof of $(i)$ is completed after taking $\varepsilon=1/2$.

In order to prove $(ii)$, using Markov's inequality, we easily obtain 
\be{
  \IP\bcls{|B^{\iota\prime}(\C{M}^{-1}\log n^{1/24})|\geq n^{1/12}}= \bigo(n^{-1/24} ),
} which implies 
\be{
  \IP\bcls{\tau_n\geq \C{M}^{-1}\log n^{1/24}}=1- \bigo(n^{-1/24} ).
} 
From \eqref{forwardB}, we then have \be{
  \lim_{t\to\infty}\IP[|W^{\iota}(t)-W^{\iota}|>\varepsilon_1]= 0
}
for any given $\varepsilon_1>0$. Furthermore, we observe that for the given $\varepsilon_1$ and large enough $n$, we have 
\bes{
  \mel\bbclc{\clc{\tau_n\geq \C{M}^{-1}\log n^{1/24}}\cap\clc{|W^{\iota}(\tau_n)-W^{\iota}|\leq\varepsilon_1}\cap\clc{W^{\iota} \geq n^{-1/48}+\varepsilon_1}}\\
   &\subset\bbclc{\clc{\tau_n\geq \C{M}^{-1}\log n^{1/24}}\cap \clc{|W^{\iota}(\tau_n)-W^{\iota}|\leq\varepsilon_1}\cap\clc{W^{\iota}\clr{\tau_n} \geq n^{-1/48}}}.
}
Moreover, it holds that $\clc{W^{\iota}>0}=\clc{\lim_{t\to\infty}|B^{\iota\prime}(t)|=\infty}~a.s.$ (see \cite[(3.10)]{nerman1981convergence}). Under Assumption~\ref{dTVxi}(iii), we have $\IP\cls{\clc{W^{\iota}>0}}>0$. Since $\varepsilon_1$ is arbitrary and $A_n= \clc{W^{\iota}\clr{\tau_n} \geq n^{-1/48}}\cap\clc{|B^{\iota\prime}\clr{\tau_n}|-n^{11/24}\leq|B^{\iota}\clr{\tau_n}|\leq |B^{\iota\prime}\clr{\tau_n}|}$, we conclude that $\lim_{n\to\infty}\IP\cls{A_n}>0$.
\end{proof}

\subsection{Susceptibility process and backward branching process}\label{sectionsusbranching}
The expected final size of a major outbreak, which refers to the proportion of individuals who eventually recover, can be represented by the survival probability of an approximate backward branching process. This branching process provides a good approximation of an individual's susceptibility set from a generation perspective, which is a crucial tool for determining the fraction of the population that will ultimately be infected during a major outbreak. The concept of susceptibility sets was first introduced by \cite{ball2001stochastic,ball2002general, ball2009threshold} to analyze epidemics in discrete time or from a generational perspective. \cite{britton2019infector} then introduced a dynamical extension of susceptibility sets called the \emph{susceptibility process}.

We can associate an epidemic process on a graph with a random graph $\C{G}(\cV_n,E_n)$, where $\cV_n$ represents all individuals in the graph. Let $\cV_{n,i}$ denote the set of all $n_j$ individuals of type~$i$ with $i\in[k]$, and $E_n$ denote the set of all weighted directed edges in $\C{G}$. The edge set $E_n$ consists of all directed pairs $(a,b)\in \cV_n\times \cV_n$, with $a\neq b$. For an edge $(a,b)\in \cV_n\times \cV_n$ from $a$ to $b$, we refer to $a$ as the tail and $b$ as the head. The length of edge $(a,b)$ is defined as the time from $a$ becoming infected until it infects or contacts $b$. The length of an edge could be infinite, but for our arguments, we restrict to the weighted edge set $E_n'\subset E_n$ of all edges $(a,b)\in E_n$ with finite length. We denote the corresponding directed random graph of $\C{G}(\cV_n,E_n)$ with edges of finite length as $\C{G}'(\cV_n,E'_n)$.

To construct the susceptibility process $\F{S}_{n,v}$ of a randomly chosen individual $v$ of type~$\nu$, we adopt the method developed by \cite[Section 3.1]{britton2019infector}. The process is denoted by ${\F{S}_{n,v}(t),t\geq0}$, where the time parameter counts backward. The construction is based on a weighted directed random graph representation $\C{G}'(V,E')$ of the population and the epidemic on it. Here, we provide a brief introduction to the construction. 

For a given time instant $t'$, the susceptibility process $\{\F{S}_{n,v}(t),t\in[0,t']\}$ is derived by  constructing part of the random directed graph $\C{G}'$ around vertex $v$ by means of
the process of growing graphs $\{\C{\hat{G}}(l)\} = \{\C{\hat{G}}(l), l\in\mathbb{N}_0=\mathbb{N}\cup\{0\}\}$ in which vertices in the susceptibility process are ``explored'' one at a time.

For $l\in\mathbb{N}_0$, $\C{\hat G}$ is the 4-tuple
\be{
    \C{\hat G}(l)=\{\hat{V}^a(l),\hat{V}^e(l),\hat{V}^p(l),\hat{E}(l)\}.
}
Here $\hat{E}(l)$ denotes the edge set of $ \C{\hat G}(l)$. Vertices in $ \C{\hat G}(l)$ can be ``active'', ``passive'', or ``explored''.
The sets of these vertices are denoted by $\hat{V}^a(l)$, $\hat{V}^e(l)$, $\hat{V}^p(l)$ respectively. For a given time $t'$ before the construction is ``flagged'' (the construction is flagged means the newly added vertex was chosen before; that is, the construction  is valid  before the flagged time, see \cite[Construction step 4]{britton2019infector}), there exists a constant $l'$ such that for every vertex, say $v'$, in $\hat{V}^e(l')$ of $\C{\hat G}(l')$, there exists a path with time length no larger than $t'$ starts from $v'$ and ends at $v$. That is, $\hat{V}^e(l')=\F{S}_{n,v}(t')$. 

The vertices in $\hat{V}^a(l')$ also have infection chains running forward to $v$ but with time length larger than $t'$. For each vertex in $\hat{V}^a(l')$, there exists an edge in $\hat{E}(l')$ connecting that vertex (as tail) and one of vertices of $\hat{V}^e(l')$ (as head), see \cite[Construction step 4  and note (c) in Section 3.1]{britton2019infector}. During the construction, the active vertex was added into $\C{G}(l')$ according to every explored vertex with a binomial random number with parameters $n_j$ and $\bR_{0,i,j}/n_j$ (where the explored one is of type~$j$ and the newly added one is of type~$i$), and some of the active vertices were converted to explored ones eventually; that is
\ben{\label{Va}
    |\hat{V}^a(l')| 
    \leq \sum_{j\in[k]}\sum_{u\in \hat{V}^e(l')\cap \cV_{n,j}}\sum_{i\in[k]}x_{u,i},
}
where $x_{u,i}\sim \Bi\clr{n_j,\bR_{0,i,j}/n_j}$ when $u\in \cV_{n,j}$.

For every vertex, say $a$ of type~$i$, in $\hat{V}^p(l')$, it is an individual that would be contacted by a vertex, say $b$ of type~$j$, in $\hat{V}^a(l')\cup\hat{V}^e(l')$ corresponding to one point in the point process $\xi_{b,Q_i,i,j}$, see \cite[Construction step 4-(4b)  and  note (d) in Section 3.1 ]{britton2019infector}. That is
\be{\label{Vp}
    |\hat{V}^p(l')|\leq \sum_{i\in[k]}\sum_{u\in\clr{\hat{V}^a(l')\cup\hat{V}^e(l')}\cap \cV_{n,i}}\sum_{j\in[k]}\xi_{u,Q_i,i,j}(\IR_+).
}
Define, for $a\geq0$,
\be{\F{S}_{n,v}(t;a,j)=\cV_{n,j}\cap\clc{\F{S}_{n,v}(t)/\F{S}_{n,v}(t-a)}.
}
That is, $\F{S}_{n,v}(t;a,j)$ consists of the vertices of type~$j$ in $\F{S}_{n,v}(t)$, that are not part of $\F{S}_{n,v}(t-a)$, or in
other words, the vertices of type~$j$ that join $v$'s susceptibility process between times $t-a$ and $t$. Let $\widehat B^{\nu\prime}_{j}(t;a,j)$ be the number of vertices of type~$j$ that have age less than $a$ in the branching process~$\widehat{\C{B}}^{\nu\prime}_{j}$ at time t.

We will see in the following that the susceptibility process construction is not ``flagged''  before time $\frac{1}{3\widehat{\C{M}}}\log n$ with high probability. 
\begin{lemma}\label{susbranching}
There exists a probability space on which we can define the branching process $\widehat{\C{B}}^{\nu\prime}$ and the susceptibility process $\F{S}_{n,v}$ such that for every $t^* \in\bbclr{0,\frac{1}{3\widehat{\C{M}}}\log n}$,
\be{
    \lim_{n\to\infty}\IP\bcls{
    \text{$|\F{S}_{n,v}(t;a,j)| =\widehat{B}^{\nu\prime}(t;a,j)$ for all $t\leq t^*$, $a\in[0,t^*]$, $j\in[k]$}} =1.
}
\end{lemma}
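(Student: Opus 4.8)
The plan is to prove the identity by coupling the vertex-exploration that generates the susceptibility process $\F{S}_{n,v}$ with the backward branching process $\widehat{\C{B}}^{\nu\prime}$, run up to the time horizon $t^*=\frac{1}{3\widehat{\C{M}}}\log n$, at which $\widehat{\C{B}}^{\nu\prime}$ has of order $n^{1/3}$ individuals (since $\widehat{B}^{\nu\prime}(t)e^{-\widehat{\C{M}}t}\to\widehat{W}^{\nu}\hat\zeta$ in $L_1$ by \eqref{backwardB}). On a common probability space I would build both objects simultaneously, processing the growing graphs $\{\hat{\C{G}}(l)\}$ one explored vertex at a time: when an explored vertex of type $j$ is processed, its potential infectors of each type $i$ are generated and coupled to the Poisson offspring of the matching branching-process individual. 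The two constructions then coincide as labelled, time-stamped, type-resolved structures as long as (a) no vertex label is ever repeated, i.e. the construction is never ``flagged'', and (b) the finite-population offspring mechanism is coupled to equal the Poisson branching mechanism. On the complement of these two failure events the join times into $\F{S}_{n,v}$ agree exactly with the birth times in $\widehat{\C{B}}^{\nu\prime}$, so $|\F{S}_{n,v}(t;a,j)|=\widehat{B}^{\nu\prime}(t;a,j)$ holds simultaneously for all $t\leq t^*$, $a\in[0,t^*]$ and $j\in[k]$. Since the event in the lemma is decreasing in $t^*$, it suffices to treat the largest value $t^*=\frac{1}{3\widehat{\C{M}}}\log n$ and to show both failure events have vanishing probability.

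First I would control the number of vertices touched up to the horizon. The $L_1$-convergence in \eqref{backwardB} gives $\IE|\widehat{B}^{\nu\prime}(t^*)|\leq Ce^{\widehat{\C{M}}t^*}=Cn^{1/3}$, so by Markov's inequality the total size of $\widehat{\C{B}}^{\nu\prime}$ at time $t^*$---and hence, on the coupling event, the explored set $\hat V^e$---is $\bigo(n^{1/3}\log n)$ with high probability. The active set is then bounded through \eqref{Va}, whose summands are binomials with bounded means, and the passive set through \eqref{Vp}, whose summands have finite mean (indeed finite second moment by Assumption~\ref{dTVxi}$(i)$); consequently $|\hat V^a(l')|+|\hat V^e(l')|+|\hat V^p(l')|=\bigo(n^{1/3}\log n)$ with high probability throughout the construction.

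For failure event (a) I would run a birthday-problem estimate: each newly added vertex is drawn uniformly from a type class of size at least $\min_i n_i\geq cn$ for some $c>0$ (Assumption~\ref{initial}), so conditionally on having touched $m$ vertices the probability that the next draw collides with a vertex already present is $\bigo(m/n)$; summing over the $\bigo(n^{1/3}\log n)$ draws gives total collision probability of order $(n^{1/3}\log n)^2/n=\bigo(n^{-1/3}(\log n)^2)\to0$. For failure event (b), each per-vertex offspring count is a binomial whose total variation distance to the matching Poisson law is $\bigo(1/n)$; accumulated over the $\bigo(n^{1/3}\log n)$ explored vertices this is $\lito(1)$, so a maximal coupling makes all offspring mechanisms agree with high probability. (In the empirical setting of Section~\ref{contempirical}, one additionally invokes Assumption~\ref{empirical} to couple the $n$-dependent contact-time laws to their limits at total-variation cost $\lito(n^{-1/3})$ per vertex, again summable to $\lito(1)$.)

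Intersecting the two high-probability events, the exploration and the backward branching process coincide up to the horizon, which yields the claimed identity uniformly in $(t,a,j)$ and completes the proof. The main obstacle I anticipate is not any single estimate but the bookkeeping needed to show that flagging and the binomial--Poisson discrepancy are the \emph{only} ways the coupling can break, and that the good event simultaneously controls \emph{all} of $t\leq t^*$, $a\in[0,t^*]$ and $j\in[k]$; this hinges on verifying that the exploration order of $\{\hat{\C{G}}(l)\}$ is compatible with the backward time-ordering, so that the age-refined counts---and not merely the totals---are preserved by the coupling.
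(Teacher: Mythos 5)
Your proposal follows essentially the same route as the paper's proof: identify the exploration of $\{\hat{\C{G}}(l)\}$ with a binomial-offspring branching process before flagging, couple that to the Poisson backward branching process at an accumulated total-variation cost that vanishes for $\bigo(n^{1/3})$-many individuals, control $|\hat V^e|$, $|\hat V^a|$, $|\hat V^p|$ via Markov's inequality and the bounds \eqref{Va}--\eqref{Vp}, and rule out flagging with a birthday-problem estimate. The only differences are cosmetic (polylogarithmic thresholds versus the paper's exponents $n^{9/24}$, $n^{10/24}$, $n^{11/24}$), so this is the same argument.
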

\begin{proof}
Before the time, say $t^*$, at which the construction is still not flagged, the susceptibility process $\{\F{S}_{n,v}(t')=\hat{V}^{e}(l')\}_{0\leq t'\leq t^*}$ is equivalent to a branching process $\widehat{\C{B}}^{\nu\prime}_{\mathrm{bin},v}$. In this branching process, an individual of type~$j$ gives birth to a binomial distributed number of individuals of type~$i$, where the parameters of the binomial-distributed random variable are $n_i$ and $\bR_{0,i,j}/n_j$, see \cite[Section 3.1]{britton2019infector}. Furthermore, the ages of the ``mother'' particles at birth of a child are independent and have density $\IE\clc{\xi_{Q_i,i,j}(da)}/\bR_{0,i,j}$, where $a$ is the age of the individual. The branching process $\widehat{\C{B}}_{\mathrm{bin},v}$ can be coupled to a branching process $\widehat{\C{B}}^{\nu\prime}_{v}$ (or denoted by $\widehat{\C{B}}^{\nu\prime}$) in such a way that individuals of type~$j$ give births to individuals of type~$i$ according to a Poisson point process with intensity $p_i\IE\clc{\xi_{Q_i,i,j}(da)}/p_j$ with failure probability $\bigo(n^{-1/2})$ as long as the number of individuals born in any of the two branching processes is $\lito(n^{1/2})$; see \cite[Section 3.3]{britton2019infector}.

We now consider the flagged time. We will show that the flagged time for the construction lies within the interval $(0, 1/3\widehat{\C{M}}\log n)$. This result is proven using a similar approach to the proof of \cite[Lemma 3.1]{britton2019infector}. For $t^*\in\bbclr{0,\frac{1}{3\widehat{\C{M}}}\log n}$, we know that
\be{
    \IE |\hat{V}^e(l^*)| \leq \IE|\widehat{B}^{\nu\prime}(t^*)|\leq  e_{\widehat{W}}n^{1/3}.
}
By Markov's inequality we obtain that
\be{
    \IP\bbcls{|\hat{V}^e(l^*)|> n^{9/24}}\leq e_{\widehat{W}}n^{-1/24}.
}
Conditional on the event $A_{n,1}= \clc{|\hat{V}^e(l^*)|\leq n^{9/24}}$ and according to the estimate \eqref{Va}, we obtain
\bes{
    \mel\IE\bclc{|\hat{V}^a(l^*)| \given A_{n,1}} \\
    &\leq\IE\bbbclc{\sum_{j\in[k]}\sum_{v\in \hat{V}^e(l^*)\cap V^{(j)}_n}\sum_{i\in[k]}x_{v,i} \given A_{n,1}}\leq \bbbclr{\sum_{i,j\in[k]}\bR_{0,i,j}}n^{9/24},
}
and thus
\be{
    \IP\bcls{|\hat{V}^a(l^*)|> n^{10/24} \given A_{n,1}}
    \leq \bbbclr{\sum_{i,j\in[k]}\bR_{0,i,j}} n^{-1/24}.
}
That is, there exists an event $A_{n,2}= A_{n,1}\cap \clc{|\hat{V}^a(l^*)|\leq n^{10/24}} $ on which
\be{
    |\hat{V}^a(l^*)\cup\hat{V}^e(l^*)|
    \leq 2n^{10/24}.
}
Conditioned on the event $A_{n,2}$ and by the estimate \eqref{Vp} we know that
\bes{
    \IE\clc{|\hat{V}^p(l^*)| | A_{n,2}}
    & \leq \IE\bbbclc{\sum_{i\in[k]}\sum_{v\in\clr{\hat{V}^a(l^*)\cup\hat{V}^e(l^*)}\cap \cV_{n,i}}\sum_{j\in[k]}\xi_{v,Q_i,i,j}(\IR_+) \given A_{n,2}}\\
    & \leq 2\bbbclr{\sum_{i,j\in[k]}\bR_{0,i,j}}n^{10/24},
}
and thus
\be{
    \IP\bcls{|\hat{V}^p(l^*)|> n^{11/24} \given A_{n,2}}
    \leq2\bbbclr{\sum_{i,j\in[k]}\bR_{0,i,j}}n^{-1/24}.
}
Thus there exists an event $A_{n,3}= A_{n,2}\cap\clc{|\hat{V}^p(l^*)|\leq n^{11/24}}$ on which
\be{
   |\hat{V}^a(l^*)|
   \leq n^{9/24},
   \qquad |\hat{V}^e(l^*)|\leq n^{10/24},
   \qquad|\hat{V}^p(l^*)|\leq n^{11/24},
}
and $\IP\cls{A_{n,3}^c}= \bigo(n^{-1/24})$.

We can use birthday-problem-like arguments (\cite[p.\,24]{grimmett2001probability}) to show that the probability that the first $|\hat{V}^a(l^*)\cup \hat{V}^e(l^*)|$ activated vertices in $\{\C{\hat{G}}(l), l\in\mathbb{N}_0\}$ are not all different is bounded from above by
\bes{
    n^{-1} (|\hat{V}^a(l^*)|+ |\hat{V}^e(l^*)|)^2
    = \bigo(n^{-1/6}),
}
while the probability that there are vertices in $\hat{V}^a(l^*)\cup\hat{V}^e(l^*)$ when the construction of susceptibility process ``tries to include'' in
$\{\hat{V}^p(l),l\in\mathbb{N}_0\}$ among the first $|\hat{V}^p(l^*)|$ vertices, is bounded from above by
\be{
    |\hat{V}^p(l^*)| \frac{|\hat{V}^a(l^*)|+ |\hat{V}^e(l^*)|}{n}
    = \bigo(n^{-1/8}),
}
which completes the proof.
\end{proof}

From the above argument, we know that $\widehat{\C{B}}^{\nu\prime}$ is constructed in backward time up to $t_n(u)$ and can be used to approximate the susceptibility process with high probability. We now proceed to discuss the epidemic process in forward time. The method of \cite{britton2019infector} enables us to construct the epidemic process $\C{B}_n^{\iota}$ in forward time. It is shown by \cite[Section 4.1, Proof of Theorem 1]{britton2019infector} how the epidemic process and the susceptibility process match to each other. While when these two processes disjoint to each other, the epidemic process is constructed in the original way (that is, infected individuals of type~$i$ infect their neighbors of type~$j$ via independent point processes identically distributed with $\xi_{n,Q_i,i,j}$) which can be coupled to a branching process at the initial stage as introduced before. We summarise the above argument as the following proposition.

\begin{proposition}
Using the method developed by \cite{britton2019infector}, we can construct the susceptibility process $\F{S}_{n,v}$ for a randomly chosen individual $v$ of type~$\nu$, as well as the epidemic process $\C{I}^{\iota}_n, $ in the same probability space. This construction is carried out before the point at which the two processes share any individual(s). We can approximate these processes $\F{S}_{n,v}$ and $\C{I}_n^{\iota}$ using the branching processes $\widehat{\C{B}}^{\nu}$ and $\C{B}^{\iota}$ respectively.
\end{proposition}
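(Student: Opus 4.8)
The plan is to glue together the two one-sided couplings already established and to verify that they survive being realised on a common probability space. Concretely, I would work on the weighted directed random graph $\C{G}'(\cV_n,E_n')$ of Section~\ref{sectionsusbranching}, on which both the forward epidemic and the backward susceptibility process are functionals of the \emph{same} edge lengths, and explore it from its two ends: forward out of the initial infective of type~$\iota$, and backward into the randomly chosen vertex $v$ of type~$\nu$.

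For the forward direction I would invoke the strong coupling of \cite{ball1995strong}, together with the ghost bookkeeping of Lemma~\ref{L+17/24}, to identify the forward epidemic process $\C{B}_n^{\iota}$ (and with it the infected set $\C{I}_n^{\iota}$) with the forward C--M--J branching process $\C{B}^{\iota\prime}$, the two agreeing with high probability until the $\floor{n^{17/24}}$-th individual is born, the discrepancy being absorbed by $L_+(\tau_n)$. For the backward direction, Lemma~\ref{susbranching} already furnishes a probability space on which the susceptibility process $\F{S}_{n,v}$ coincides with the backward branching process $\widehat{\C{B}}^{\nu\prime}$ for all backward times up to $\tfrac{1}{3\widehat{\C{M}}}\log n$, i.e.\ until roughly $\floor{n^{1/3}}$ vertices have been explored, again with high probability. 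The assertion of the proposition is that these two approximations hold \emph{simultaneously} on $\C{G}'$.

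The structural fact that makes the joint construction consistent is that a directed edge $(a,b)\in E_n'$ is queried by the forward exploration only once its tail $a$ has entered the forward cluster, and by the backward exploration only once its head $b$ has entered the backward susceptibility set; hence a single edge is claimed by both explorations exactly when $a$ is forward-explored and $b$ is backward-explored, which is precisely an infection chain from $\iota$ to $v$, and therefore the first instant at which the two processes share an individual. Consequently, up to that meeting time the two explorations touch disjoint vertex sets and disjoint ordered pairs, so the i.i.d.\ edge-length variables can be assigned independently to each side with no conflict, and both couplings of the previous paragraph remain valid at once. This is exactly the Britton--Pardoux matching of the forward and backward constructions (\cite[Section~4.1]{britton2019infector}), transcribed here to the multi-type setting with general contact processes $\xi_{Q_i,i,j}$.

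The main obstacle is the bookkeeping required to make ``before the meeting time'' precise and to ensure that neither one-sided construction is \emph{flagged}---i.e.\ reuses a label and so breaks its branching-process coupling---before the two sides meet. This is exactly what the birthday-problem estimates of Lemmas~\ref{L+17/24} and~\ref{susbranching} control: the forward exploration touches $\bigo(n^{17/24})$ vertices and the backward exploration at most $\bigo(n^{11/24})$ active, explored and passive vertices, so each side's internal label-collision probability is $\lito(1)$. Hence, with high probability, both branching approximations persist up to the first shared individual, which is all that the proposition asserts; note that no bound on the forward--backward collision probability itself is needed, since that collision \emph{is} the meeting at which the construction is stopped.
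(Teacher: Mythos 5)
Your proposal follows essentially the same route as the paper: the forward coupling via \cite{ball1995strong} with the ghost bookkeeping of Lemma~\ref{L+17/24}, the backward coupling of Lemma~\ref{susbranching}, and the deferral to \cite[Section~4.1]{britton2019infector} for how the two explorations of $\C{G}'(\cV_n,E_n')$ are matched on a common probability space up to the first shared individual. Your explicit tail/head observation about when a directed edge is queried by each exploration is a correct and slightly more self-contained articulation of the matching step that the paper simply cites, so there is no substantive difference in approach.
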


\subsection{Proof of Theorem~\ref{maintheorem}}\label{proof2.3}
We can now prove Theorem~\ref{maintheorem} using a method similar to that used by \cite[Proof of Theorem 2.10]{barbour2013approximating} and \cite[Proof of Theorem 3.2]{barbour2014couplings}.
 
According to \cite{britton2019infector}, the randomly chosen $v$ of type~$\nu$ is still susceptible at time $\tau_n+t_n(u)$ if and only if the intersection set of $\C{B}^{\iota}_n(\tau_n)$ and $\F{S}_{n,v}(t_n(u))$ is empty. Note that for independently and randomly chosen $v$
and $v'$ both of type~$j\in[k]$,
\be{
    \IE\clc{n_{\nu}^{-1}S^{\iota}_{n,\nu}(t)\given A_n}
    =\IP\cls{v\in \cS^{\iota}_{n,\nu}(t)\given A_n},
}
and similarly
\be{
   \Var\clr{n_{\nu}^{-1}S^{\iota}_{n,\nu}\clr{t}   \given A_n}
   =\IP\cls{\{v,v'\}\in \cS^{\iota}_{n,\nu}(t)\given A_n}-\IP\cls{v\in \cS^{\iota}_{n,\nu}(t)\given A_n}^2;
}
see for example \cite[Section 2.4]{barbour2013approximating}.

So the expectation of the proportion of individuals that have not been infected by time $\tau_n+t_n(u)$, which is
the same as the probability that the randomly chosen individual $v$ is still susceptible, is

\bes{
         \IP\cls{v\in \cS^{\iota}_{n,\nu}\clr{\tau_n+t_n(u)}   | A_n} 
         &=\IP\cls{\C{B}^{\iota}_n\clr{\tau_n}\cap\F{S}_{n,v}(t_n(u))=\emptyset\given A_n}\\
         &\leq \IP\cls{\C{B}^{{\iota\prime}}\clr{\tau_n}\cap\F{S}_{n,v}(t_n(u))=\emptyset\given A_n}.
}
We know that
\bes{
    \mel\IP\cls{\C{B}^{{\iota\prime}}\clr{\tau_n}\cap\F{S}_{n,v}(t_n(u))=\emptyset\given A_n}\\
    &=\IP\cls{\clc{\C{B}^{{\iota\prime}}\clr{\tau_n}\cap\F{S}_{n,v}(t_n(u))=\emptyset}\cap  A_{n,3}   \given A_n}\\
    &\quad+ \IP\cls{\clc{\C{B}^{{\iota\prime}}\clr{\tau_n}\cap\F{S}_{n,v}(t_n(u))=\emptyset} \cap  A_{n,3}^c  \given A_n}\\
    &\leq \IP\cls{\clc{\C{B}^{\iota\prime}\clr{\tau_n}\cap\C{\widehat{B}}^{\nu\prime}(t_n(u))=\emptyset}\cap A_{n,3} \given A_n} + \IP\cls{A_{n,3}^c | A_n},
}
where $A_{n,3}$ is the event defined in Section~\ref{sectionsusbranching}.
Note that the intersection set of the two vectors, $\C{B}^{\iota\prime}\clr{\tau_n}$ and $\widehat{\C{B}}^{\nu\prime}(t_n(u))$ is empty if and only if for every $i\in[k]$, $\C{B}^{\iota\prime}_{ i}\clr{\tau_n}\cap \widehat{\C{B}}^{\nu\prime}_i(t_n(u))=\emptyset$. The distribution of the common members of the 
sets $\C{B}^{\iota\prime}_{ i}\clr{\tau_n}$ and $\widehat{\C{B}}^{\nu\prime}_i(t_n(u))$, conditional on their sizes being known, is the hypergeometric distribution 
\be{
  H\clr{n_i,B^{\iota\prime}_{ i}\clr{\tau_n},\widehat{B}^{\nu\prime}_i(t_n(u))}
,} 
and the mean number of the intersection set is
\be{
   e_{\iota, \nu, i}=\frac{B^{\iota\prime}_{ i}\clr{\tau_n}\widehat{B}^{\nu\prime}_i(t_n(u))}{n_i}.
}
According to a standard Poisson approximation argument (see for example \cite[Theorem 6.A]{barbour1992poisson}), the total variation distance between the hypergeometric distribution $H\clr{n_i,B^{\iota\prime}_{ i}\clr{\tau_n},\widehat{B}^{\nu\prime}_i(t_n(u))}$ and Poisson distribution $\Po\clr{e_{\iota, \nu, i}}$ is at most
\bes{
    \mel\clr{1-e^{-e_{\iota, \nu, i}}}\frac{n_i}{n_i-1}\bbbclr{\frac{B^{\iota\prime}_{ i}\clr{\tau_n}}{n_i}+\frac{\widehat{B}^{\nu\prime}_i(t_n(u))}{n_i}
    -\frac{B^{\iota\prime}_{ i}\clr{\tau_n}\widehat{B}^{\nu\prime}_i(t_n(u))}{n_i^2} -\frac{1}{n_i}}\\
    &\leq  \frac{1}{n_i}\bclr{B^{\iota\prime}_{ i}\clr{\tau_n} + \widehat{B}^{\nu\prime}_i(t_n(u))}.
}
Let $P(n,m_1,m_2)$ denote the hypergeometric probability that two independently chosen uniform random subsets of $[n]$ of sizes $m_1$ and $m_2$ do not intersect. Let $\bf a$ and $\bf b$ be two $k$ dimensional vectors of non-negative integers. By using \eqref{forwardB}, we know that $W^{\iota}e^{\C{M}\tau_n}\sim \lfloor n^{17/24} \rfloor$ as $n\to\infty$, and for $j\in[k]$, $B^{\iota\prime}_{ j}(\tau_n)\sim \lfloor n^{17/24}\rfloor\zeta_j$ as $n\to\infty$. Similarly, using \eqref{backwardB}, for $j\in[k]$, $\widehat{B}^{\iota\prime}_{ j}(\tau_n)\sim \widehat{W}^{\nu}\hat{\zeta}_j$ as $n\to\infty$. Now, we have
\bes{
  \mel \IP\bcls{\clc{\C{B}^{\iota\prime}\clr{\tau_n}\cap\C{\widehat{B}}^{\nu\prime}(t_n(u))=\emptyset}\cap A_{n,3} \given A_n}\\
   &=\sum_{{\bf a,b}}\IP\bcls{\C{B}^{\iota\prime}\clr{\tau_n}\cap\C{\widehat{B}}^{\nu\prime}(t_n(u))=\emptyset \given A_n\cap A_{n,3} ; \widehat{B}^{\nu\prime}(t_n(u))={\bf a},B^{\iota\prime}(\tau_n)={\bf b}}\\
  &\kern4em\times\IP\bcls{\widehat{B}^{\nu\prime}(t_n(u))={\bf a}, B^{\iota\prime}(\tau_n)={\bf b} \given A_n\cap A_{n,3}}\IP\cls{A_{n,3}\given A_n}\\
      &=\sum_{{\bf a,b}} \prod_{i\in [k]} P \bclr{n_i , B^{\iota\prime}_{ i}(\tau_n) , \widehat{B}^{\nu\prime}_i(t_n(u))} \\
  &\kern4em\times \IP\cls{\widehat{B}^{\nu\prime}(t_n(u))={\bf a}, B^{\iota\prime}(\tau_n)={\bf b} \given A_n\cap A_{n,3}}\IP\cls{A_{n,3}\given A_n}\\
  &\leq \sum_{{\bf a,b}} \prod_{i\in[k]}\bbcls{\exp\bclc{- B^{\iota\prime}_{ i}\clr{\tau_n}\widehat{B}^{\nu\prime}_i(t_n(u))/n_i} + \frac{1}{n_i}\bclr{B^{\iota\prime}_{ i}(\tau_n)+  \widehat{B}^{\nu\prime}_i(t_n(u))}}\\
  &\kern4em\times \IP\bcls{\widehat{B}^{\nu\prime}(t_n(u))={\bf a}, B^{\iota\prime}(\tau_n)={\bf b} \given A_n\cap A_{n,3}}\IP\cls{A_{n,3}\given A_n}\\
   &\leq \IE\bbbclc{\exp\bbbclc{-\sum_{i\in[k]} B^{\iota\prime}_{ i}\clr{\tau_n}\widehat{B}^{\nu\prime}_i(t_n(u))/n_i} \given A_n} \\
   &\qquad+ \IE\bbclc{\Delta\bbclr{B^{\iota\prime}(\tau_n),  \widehat{B}^{\nu\prime}(t_n(u))} \given A_n\cap A_{n,3}}\\
     &\sim \IE\bbclc{\exp\bbclc{-\sum_{i\in[k]}\zeta_i\lfloor n^{17/24}\rfloor \widehat{W}^{\nu}\hat{\zeta}_ie^{\widehat{\C{M}}u}n^{7/24}/n_i} \given A_n}+\bigo(n^{-7/24})\\
    &\to\IE\bbbclc{\exp\bbclc{- \widehat{W}^{\nu}e^{\widehat{\C{M}}u} \sum_{i\in[k]}\zeta_i \hat{\zeta}_i p_i^{-1}}} \qquad \text{as $n\to\infty$,}
}
where 
\bes{
  \mel\Delta\bbclr{B^{\iota\prime}(\tau_n),  \widehat{B}^{\nu\prime}(t_n(u))}\\
  &=\prod_{i\in[k]}\bbcls{\exp\bclc{- B^{\iota\prime}_{ i}\clr{\tau_n}\widehat{B}^{\nu\prime}_i(t_n(u))/n_i}
  + \frac{1}{n_i}\bclr{B^{\iota\prime}_{ i}(\tau_n)+  \widehat{B}^{\nu\prime}_i(t_n(u))}} \\
  &\quad-\exp\bbclc{-\sum_{i\in[k]} B^{\iota\prime}_{ i}\clr{\tau_n}\widehat{B}^{\nu\prime}_i(t_n(u))/n_i}.
}
In the event $A_n\cap A_{n,3}$, we know that $|B^{\iota\prime}\clr{\tau_n}|=\floor{n^{17/24}}$ and $|\widehat{B}^{\nu\prime}(t_n(u))|\leq \bigo(n^{1/3})$. Thus the conditional expectation of $\Delta\bclr{B^{\iota\prime}(\tau_n),  \widehat{B}^{\nu\prime}(t_n(u))}$ is of order $\bigo(n^{-7/24})$. Similarly, for the lower bound side, we have
\bes{
    \mel\IP\bcls{v\in \cS^{\iota}_{n,\nu}\clr{\tau_n+t_n(u)}   \given A_n}\\
    &\geq \IP\bcls{\clc{\C{B}^{\iota}_n\clr{\tau_n}\cap\F{S}_{n,v}(t_n(u))=\emptyset}\cap  A_{n,3} \given A_n}\\
    &=\IP\bcls{\clc{\C{B}^{\iota}_n\clr{\tau_n}\cap\C{\widehat{B}}^{\nu\prime}(t_n(u))=\emptyset}\cap A_{n,3} \given A_n}\\
     &\geq \sum_{{\bf a,b}} \prod_{i\in[k]}\bbcls{\exp\bclc{- (B^{\iota\prime}_{ i}\clr{\tau_n}-n^{11/24})\widehat{B}^{\nu\prime}_i(t_n(u))/n_i} \\
    &\kern7em-\frac{1}{n_i}\bbclr{(B^{\iota\prime}_{ i}\clr{\tau_n}-n^{11/24})+  \widehat{B}^{\nu\prime}_i(t_n(u))}}\\
   &\kern4em\times \IP\cls{\widehat{B}^{\nu\prime}(t_n(u))={\bf a}, B^{\iota\prime}(\tau_n)={\bf b} \given A_n\cap A_{n,3}}\IP\cls{A_{n,3}\given A_n}\\
    &\geq \IE\bbclc{\exp\bbclc{-\sum_{i\in[k]} (B^{\iota\prime}_{ i}\clr{\tau_n}-n^{11/24})\widehat{B}^{\nu\prime}_i(t_n(u))/n_i} \given A_n} \\
    &\quad-\IP(\cls{A_{n,3}}^c\given A_n) - \IE\bbclc{\Delta\bbclr{(B^{\iota\prime}_{ i}\clr{\tau_n}-n^{11/24}),  \widehat{B}^{\nu}(t_n(u))} \given A_n\cap A_{n,3}}\\
   &\sim\IE\bbclc{\exp\bbclc{-\sum_{i\in[k]}\frac{(\zeta_i\lfloor n^{17/24}\rfloor -n^{11/24}) n^{7/24}}{n_i}\widehat{W}^{\nu}\hat{\zeta}_ie^{\widehat{\C{M}}u}}\given A_n}\\
   &\quad-\IP(\cls{A_{n,3}}^c\given A_n) -\bigo(n^{-7/24})\\
   & \to \IE\bbclc{\exp\bbclc{- \widehat{W}^{\nu}e^{\widehat{\C{M}}u} \sum_{i\in[k]}\zeta_i \hat{\zeta}_i p_i^{-1}}} \quad \text{as $n\to\infty$,}
}
where $\Delta\bclr{(B^{\iota\prime}_{ i}\clr{\tau_n}-n^{11/24}),  \widehat{B}^{\nu\prime}(t_n(u))} $ is defined in the same way as above. Then, when $n$ is large enough, we obtain that, for a randomly chosen individual $v$ of type~$\nu$,
\be{
    \IP\bcls{v\in \cS^{\iota}_{n,\nu}\clr{\tau_n+t_n(u)}   \given A_n}
    \sim \IE\bbclc{\exp\bbclr{-\widehat{W}^{\nu}e^{\widehat{\C{M}}u} \sum_{i\in[k]}\zeta_i \hat{\zeta}_i p_i^{-1}}}.
}
The argument for approximating the probability that both $v$ and $v'$ of type~$\nu$ belong to $\cS^{\iota}_{n,\nu}\clr{\tau_n+t_n(u)}$ runs in much the same way. The limiting random variable for the backward branching process $\widehat{B}_{v,v'}$ starting with two individuals can be expressed as $\widehat{W}^{\nu}_{v}+\widehat{W}^{\nu}_{v'}$, where the two are independent copies of $\widehat{W}^{\nu}$. The sizes $|\widehat{B}^{\nu}_{v}(t_n(u))|$ and $|\widehat{B}^{\nu}_{v'}(t_n(u))|$ are asymptotically $n^{7/24}\widehat{W}^{\nu}e^{\widehat{\C{M}}u}$. Thus the expectation number of individuals in the intersection set of $\C{\widehat{B}}^{\nu}_{v}(t_n(u))$ and $\C{\widehat{B}}^{\nu}_{v'}(t_n(u))$ is
\bes{
    \mel\IE\bbbclc{\sum_{i\in[k]}\bbabs{\C{\widehat{B}}^{\nu}_{v,i}(t_n(u))\cap\C{\widehat{B}}^{\nu}_{v',i}(t_n(u))}}\\
    &=\IE\bbbclc{\sum_{i\in[k]}n_i^{-1}\bbabs{\widehat{B}^{\nu}_{v,i}(t_n(u))}\bbabs{\widehat{B}^{\nu}_{v',i}(t_n(u))}}\\
    &\sim\sum_{i\in[k]}p_in^{-10/24}e^{2\widehat{\C{M}}u}\clr{\IE\clc{\hat\zeta_i\widehat{W}^{\nu}}}^2\rightarrow0
}
as $n\rightarrow\infty$, which implies that
\be{
    \IP\bbcls{\C{\widehat{B}}^{\nu}_{v}(t_n(u))\cap\C{\widehat{B}}^{\nu}_{v'}(t_n(u))=\emptyset}\rightarrow 1.
}
Conditioned on this event, the conditional probability that two randomly chosen $v$ and $v'$ are susceptible at time $\tau_n+t_n(u)$, is close to
\be{
  \IE\bclc{\exp\clc{-(\widehat{W}^{\nu}_{v}+\widehat{W}^{\nu}_{v'})e^{\widehat{\C{M}}u}}}.
}
Thus. we obtain that when $n$ is large enough,
\be{
    \IP\bcls{\{v,v'\}\subset \cS^{\iota}_{n,\nu}\clr{\tau_n+t_n(u)}   \given A_n}\sim\bbclr{\IE\bclc{\exp\clc{-\widehat{W}^{\nu}e^{\widehat{\C{M}}u}}}}^2,
}
which implies that $\Var\clr{n_{\nu}^{-1}S^{\iota}_{n,\nu}\clr{\tau_n+t_n(u)}   \given A_n}\to0$ as $n\to\infty$. 

Now we know that for any given positive constant $\varepsilon$ and any $u\in\IR$,
\be{
  \lim_{n\rightarrow\infty}\IP\bcls{\babs{n_{\nu}^{-1}S^{\iota}_{n,\nu}\clr{\tau_n+t_n(u)}-\bss^{\iota}_{\nu}(u)}>\varepsilon\given A_n}=0,
}
where $\bss^{\iota}_{\nu}(u)=\IE\clc{\exp\clc{-\widehat{W}^{\nu}e^{\widehat{\C{M}}u}m_*}}$ and 
\be{\label{m*}
  m_*=\sum_{i\in[k]}\zeta_i \hat{\zeta}_i p_i^{-1}.
}
Thus for any finite set $U\subset\IR$ we have
\bes{\label{finitedimension}
        \mel\lim_{n\rightarrow\infty}\IP\bbbcls{\sup_{u\in U}\bbabs{n_{\nu}^{-1}S^{\iota}_{n,\nu}\clr{\tau_n+t_n(u)}-\bss^{\iota}_{\nu}(u)}>\varepsilon\given A_n}\\
        & \leq \lim_{n\rightarrow\infty}\sum_{u\in U}\IP\bbbcls{\bbabs{n_{\nu}^{-1}S^{\iota}_{n,\nu}\clr{\tau_n+t_n(u)}-\bss^{\iota}_{\nu}(u)}>\varepsilon\given A_n}\\
        &=\sum_{u\in U}\lim_{n\rightarrow\infty}\IP\bbbcls{\bbabs{n_{\nu}^{-1}S^{\iota}_{n,\nu}\clr{\tau_n+t_n(u)}-\bss^{\iota}_{\nu}(u)}>\varepsilon\given A_n}
        =0.
}
Note that the function $\bss^{\iota}_{\nu}(\cdot)$ decreases smoothly from 1 to
the extinction probability for individuals of type~$\nu$, $\bss^{\iota}_{\nu} (\infty)$. So for any given $\varepsilon>0$ there is value $a_{\varepsilon}\in\IR$ such that $\bss^{\iota}_{\nu}(a_{\varepsilon})=1-\varepsilon/2$. Noting that $n_{\nu}^{-1}S^{\iota}_{n,\nu}(\cdot)$ is non-increasing and with the convention that  $S^{\iota}_{n,\nu}(t)=S^{\iota}_{n,\nu}(0)$ for each $n\in\IN$ and each $t\in(-\infty,0)$, we obtain
\bes{
       \mel\lim_{n\rightarrow\infty}\IP\bbbcls{\sup_{u\in (-\infty,a_{\varepsilon}]}  \bbabs{n_{\nu}^{-1}S^{\iota}_{n,\nu}\clr{\tau_n+t_n(u)} -\bss^{\iota}_{\nu}(u)}\leq\varepsilon\given A_n}\\
       &\geq  \lim_{n\rightarrow\infty}\IP\bbbcls{\bbabs{n_{\nu}^{-1}S^{\iota}_{n,\nu}\clr{\tau_n+t_n(a_\varepsilon)} -\bss^{\iota}_{\nu}(a_\varepsilon)}\leq\varepsilon/2\given A_n}=1.
}
If $\bss^{\iota}_{\nu}(\infty)=0$, then for any given $\varepsilon>0$, there exists $b_{\varepsilon}\in\IR$ such that $\bss^{\iota}_{\nu}(b_{\varepsilon})=\varepsilon/2$, and the discussion on $[b_{\varepsilon},\infty)$ will be similar due to the monotonicity of $\bss^{\iota}_{\nu}(\cdot)$ and $n_{\nu}^{-1}S^{\iota}_{n,\nu}(\cdot)$.

Now we turn to the convergence on $[a_{\varepsilon},T]$, where $T\in\mathbb R$ and $T>a_{\varepsilon}$. By \eqref{finitedimension} and \cite[Theorem 12.6]{billingsley1999convergence}, we know that the process $n_{\nu}^{-1}S^{\iota}_{n,\nu}(\tau_n+t_n(\cdot))$ convergences to $\bss^{\iota}_{\nu}(\cdot)$ in $D[a_{\varepsilon},T]$ in conditional probability, that is,
\be{
  \lim_{n\rightarrow\infty}\IP\bbcls{d^\circ_{a_{\varepsilon}T} \clr{n^{-1}S^{\iota}_{n,\nu}\clr{\tau_n+t_n(\cdot)},\bss^{\iota}_{\nu}(\cdot)}>\varepsilon\given A_n}=0.
}
Since $\bss^{\iota}_{\nu}(\cdot)$ is uniformly continuous on $[a_{\varepsilon},T]$, the Skorohod convergence of $n_{\nu}^{-1}S^{\iota}_{n,\nu}(\tau_n+t_n(\cdot))$ to $\bss^{\iota}_{\nu}(\cdot)$ on $[a_{\varepsilon},T]$ implies the uniform convergence (see \cite[Section 14 and 15]{billingsley1999convergence}), and the result of Theorem~\ref{maintheorem} follows.

\begin{remark} The course of the epidemic process mainly dependents on the backward branching process. Assume we start the process letting $\cV^0=\{\cV^0_1,\dots,\cV^0_k\}$ be the initial infected individuals, where $\cV^0_i$ are the individuals randomly chosen from population of type~$i$, letting the total number of initial infected individuals satisfy $|\cV^0|=\floor{n^{\alpha}}$, where $\alpha\in(\frac{2}{3},1)$ is fixed, and assuming $\lim_{n\rightarrow\infty}|\cV^0_i|/|\cV^0|=\zeta^*_i$ exists for all $i\in[k]$. Then it suffices to only construct the backward branching process until time $\frac{1-\alpha}{\C{\widehat M}}\log n$. A randomly chosen individual $v$ of type~$j$ is still susceptible at time $\frac{1-\alpha}{\C{\widehat M}}\log n$ if and only if the intersection set of the susceptibility set of $v$, $\F{S}_{n,v}$, and the initial infectives, $\cV^0$, is empty. This leads to the similar susceptible epidemic curve derived before through replacing $\zeta_i$ in \eqref{m*} by $\zeta^*_i$.
\end{remark}

\section{Proof of Theorem~\ref{IEmodels}}\label{proof3}
In Theorem~\ref{maintheorem}, we have characterised the susceptible curves for general epidemic models. Now we are going to prove  similar results for infected and recovery curves. The idea of the proof is inspired by \cite[Section 5]{pang2022functional}.

Recall that in a closed population of $n$ individuals, an individual $x$ of type~$i$ going through the susceptible-infectious-recovered (SIR) process has the time epochs $\pi_{n,x}$ and $\pi_{n,x}+Q_{i,x}$ associated with it, representing the time of infection and time of recovery, respectively.

The dynamics of the number of infected individuals of type~$\nu$, denoted by $I^{\iota}_{n, \nu}$, is given by
\bes{
        I^{\iota}_{n, \nu}(t)=\I\cls{\{Q^{\iota}_0>t\}}\delta_{\iota \nu}+\sum_{a=1}^{\substack{n_{\nu}-\delta_{\iota \nu}\\\quad-S^{\iota}_{n,\nu}(t)}}\I\cls{\pi_{n,a}+Q_{\nu,a}>t},\qquad t\geq 0,
}
where $\delta_{ij}$ is the usual Kronecker delta. The first term determines whether the initial infected individual remains infected at time $t$ if its type is $\nu$, and the second term counts the number of individuals of type~$\nu$ that were infected between time 0 and time $t$ and remain infectious at time $t$. Recall that $R^{\iota}_{n, \nu}(t)$ counts the number
of recovered individuals of type~$\nu$ at time $t$, and can be represented as
\be{
        R^{\iota}_{n, \nu}(t)=\I\cls{\{Q^{\iota}_0\leq t\}}\delta_{\iota \nu}+\sum_{a=1}^{\substack{n_{\nu}-\delta_{\iota \nu}\\\quad-S^{\iota}_{n,\nu}(t)}}\I\cls{\pi_{n,a}+Q_{\nu,a}\leq t},\quad  t\geq 0.
}
For any given $T\in\IR$ and $u\leq T$, define the $\sigma$-algebra
\be{
  \C{F}_{n,u}^{*}=A_n\bigvee\sigma(S^{\iota}_{n}(t\wedge(\tau_n+t_n(u))):t\geq0).
}
and
\ba{
        \breve{I}^{\iota}_{n, \nu}(t)&=\sum_{a=1}^{\substack{n_{\nu}-\delta_{\iota \nu}\\\quad-S^{\iota}_{n,\nu}(t)}}\I\cls{\{\pi_{n,a}+Q_{\nu,a}>t\}},\quad t\geq 0,\\
        \bar{\breve{I}}^{\iota}_{n, \nu}(u)&=\IE\bclc{n_j^{-1}\breve{I}^{\iota}_{n, \nu}\clr{\tau_n +t_n(u)}\given\C{F}_{n,u}^{*}}.
}
We know that
\bes{
        \bar{\breve{I}}^{\iota}_{n, \nu}(u)
        &=  \frac{1}{n_{\nu}}  \sum_{a=1}^{\substack{n_{\nu}-\delta_{\iota \nu}\\-S^{\iota}_{n,\nu}(\tau_n+t_n(u))}}F_{Q_{\nu}} (\tau_n +t_n(u)-\pi_{n,a})\\
        &=\int_{0}^{\tau_n +t_n(u)} F^{c}_{Q_{\nu}}(\tau_n +t_n(u)-\tau)d(1-n_{\nu}^{-1}\delta_{\iota \nu}-n_{\nu}^{-1}S^{\iota}_{n,\nu}(\tau)).
}
With the convention that $I^{\iota}_{n, l}(t)=I^{\iota}_{n, l}(0)$, $S^{\iota}_{n,l}(t)=S^{\iota}_{n,l}(0)$, $F_{Q_i}(t)=F_{Q_i}(0)$, $F^c_{Q_{i}}(t)=F^c_{Q_{i}}(0)$, for all $t\in(-\infty,0)$, all $i,l\in[k]$ and all $n\in\mathbb{N}$, we obtain by integration by parts that
\bes{
        \mel\bar{\breve{I}}^{\iota}_{n, \nu}(u)\\
        &=F^c_{Q_{\nu}}(0)(1-n_{\nu}^{-1}\delta_{\iota \nu}-n_{\nu}^{-1}S^{\iota}_{n,\nu}(\tau_n+t_n(u))\\
        &\quad-\int_{0}^{\tau_n +t_n(u)}(1-n_j^{-1}\delta_{\iota \nu}-n_j^{-1}S_n(v))dF^c_{Q_{\nu}}(\tau_n +t_n(u)-v)\\
        &=(1-n_{\nu}^{-1}-n_{\nu}^{-1}S^{\iota}_{n,\nu}(\tau_n+t_n(u))\\
        &\quad-\int_{-\tau_n-\frac{7}{24\C{M}_n}\log n}^{u}\clr{1-n_{\nu}^{-1}\delta_{\iota \nu}-n_{\nu}^{-1}S^{\iota}_{n,\nu}\clr{\tau_n+t_n(w)}}dF^c_{Q_{\nu}}(u-w)\\
         &=(1-n_j^{-1}-n_{\nu}^{-1}S^{\iota}_{n,\nu}(\tau_n+t_n(u))\\
        &\quad-\int_{-\infty}^{u}\clr{1-n_{\nu}^{-1}\delta_{\iota \nu}-n_{\nu}^{-1}S^{\iota}_{n,\nu}\clr{\tau_n+t_n(w)}}dF^c_{Q_{\nu}}(u-w).
}
Here, $dF^c_{Q_{\nu}}(\tau_n +t_n(u)-v)$ is the differential of the map $v\rightarrow F^c_{Q_{\nu}}(\tau_n +t_n(u)-v)$. Recall the result of Theorem~\ref{maintheorem}; that is, in $D(-\infty,T]$,
\be{
  n_{\nu}^{-1}S^{\iota}_{n,\nu}\clr{\tau_n+t_n(\cdot)}\to_{uc}\bss^{\iota}_{\nu}(\cdot)
}
in (conditional) probability as $n\rightarrow\infty$, where $\to_{uc}$ means uniformly convergence under metric $\mathrm{d}^\circ_{-\infty T}$. By the continuous mapping theorem applied to the map $x\in D(-\infty,T]\rightarrow \int_{-\infty}^{\cdot}x(\tau)dF^c_{Q_{\nu}}(\cdot-\tau)\in D(-\infty,T]$, we obtain
\be{
    \bar{\breve{I}}^{\iota}_{n, \nu}(\cdot)
    \to_{uc} 1-\bss^{\iota}_{\nu}(\cdot)-\int_{-\infty}^{\cdot}(1-\bss^{\iota}_{\nu}(v))dF^c_{Q_{\nu}}\clr{\cdot-v},
}
in (conditional) probability as $n\rightarrow\infty$. Now, for a given $u<T$, we turn to the (conditional) variance of $n_{\nu}^{-1}\breve{I}^{\iota}_{n, \nu}(\tau_n+t_n(u))$. 
Let
\bes{
    \Delta I^{\iota}_{n, \nu}(u)&=n_{\nu}^{-1}\breve{I}^{\iota}_{n, \nu}(\tau_n+t_n(u))-\bar{\breve{I}}^{\iota}_{n, \nu}(u)
    =\frac{1}{n_{\nu}}\sum_{a=1}^{\substack{n_{\nu}-\delta_{\iota \nu}\\-S^{\iota}_{n,\nu}(\tau_n+t_n(u))}}\chi_{n,a}(u),
}
where
\be{
  \chi_{n,a}(u)=\I\cls{\{\pi_{n,a}+Q_{\nu,a}>\tau_n+t_n(u)\}}-F^c_{Q_{\nu}}(\tau_n+t_n(u)-\pi_{n,a}).
}
It is easy to check that, for all $a$, 
\bes{
        \IE\clc{\chi_{n,a}(u)|\C{F}^*_{n,u}}=0, 
}
and, for all $a\neq b$,
\be{
        \IE\clc{\chi_{n,a}(u)\chi_{n,b}(u}|\C{F}^*_{n,u})=0.
}
Now,
\bes{
        \mel\IE\clc{(\Delta I^{\iota}_{n, \nu}(u))^2|\C{F}^*_{n,u}}\\
        &=\IE\bbbbclc{\bbbbclr{n_j^{-1}\sum_{a=1}^{\substack{n_{\nu}-\delta_{\iota \nu}\\-S^{\iota}_{n,\nu}(\tau_n+t_n(u))}}\chi_{n,a}}^2\given \C{F}^*_{n,u}}\\
        &=n_{\nu}^{-2}\IE\bbbbclc{\sum_{a=1}^{\substack{n_{\nu}-\delta_{\iota \nu}\\-S^{\iota}_{n,\nu}(\tau_n+t_n(u))}}\chi_{n,a}^2\given \C{F}^*_{n,u}}\\
        &=n_{\nu}^{-2}\sum_{a=1}^{\substack{n_{\nu}-\delta_{\iota \nu}\\-S^{\iota}_{n,\nu}(\tau_n+t_n(u))}}F_{Q_{\nu}}(\tau_n+t_n(u)-\pi_{n,a}) 
        F^c_{Q_{\nu}}(\tau_n+t_n(u)-\pi_{n,a})\\
        &=\frac{1}{n_{\nu}}\int_{0}^{\tau_n+t_n(u)}F_{Q_{\nu}}(\tau_n+t_n(u)-\tau)\\
        &\kern9em\times F^c_{Q_{\nu}}(\tau_n+t_n(u)-\tau)d(1-n_{\nu}^{-1}\delta_{\iota \nu}-n_{\nu}^{-1}S^{\iota}_{n,\nu}(\tau))\\
        & \leq \frac{1}{n_{\nu}}(1-n_{\nu}^{-1}\delta_{\iota \nu}-n_{\nu}^{-1}S^{\iota}_{n,\nu}(\tau_n+t_n(u)))= \bigo(n^{-1}).
}
Thus,
\be{
  \IE\bclc{(n_{\nu}^{-1}\breve{I}^{\iota}_{n, \nu}(\tau_n+t_n(u))-\bar{\breve{I}}^{\iota}_{n, \nu}(u))^2\given A_n}
      = \bigo(n^{-1})
}
almost surely, and for any $\varepsilon>0$, $\nu\in[k]$ and any $u\leq T$,
\be{
  \IP\bcls{|n_{\nu}^{-1}\breve{I}^{\iota}_{n, \nu}(\tau_n+t_n(u))-\bar{\breve{I}}^{\iota}_{n, \nu}(u)|>\varepsilon\given A_n}
       = \bigo(n^{-1}\varepsilon^{-2})\rightarrow0
}
as $n\rightarrow\infty$. Thus, we obtain that for any $\varepsilon>0$ and $u\leq T$,
\be{
  \lim_{n\rightarrow\infty}
  \IP\bbbbcls{\bbbabs{n_{\nu}^{-1}I^{\iota}_{n, \nu}(\tau_n+t_n(u))-\bbbclr{1-\bss^{\iota}_{\nu}(u)
  -\int_{-\infty}^u(1-\bss^{\iota}_{\nu}(v))dF^c_{Q_{\nu}}(u-v)}} >\varepsilon\given A_n}
   =0.
}
Similarly, we can derive that for any $\varepsilon>0$ and $u\leq T$,
\be{
  \lim_{n\rightarrow\infty}
  \IP\bbbbcls{\bbbabs{n_{\nu}^{-1}R^{\iota}_{n, \nu}(\tau_n+t_n(u))
    -\bbbclr{-\int_{-\infty}^u(1-\bss^{\iota}_{\nu}(v))dF_{Q_{\nu}}(u-v)}}>\varepsilon\given A_n}
  =0.
}
The extension to the supremum over $u$ for $n_{\nu}^{-1}R^{\iota}_{n, \nu}$ is much the same as the end of the proof in Section~\ref{proof2.3} by noting that $n_j^{-1}R^{\iota}_{n, \nu}(\cdot)$ is non-decreasing and $-\int_{-\infty}^{\cdot}(1-\bss^{\iota}_{\nu}(v))dF_{Q_{\nu}}(\cdot-v)$ increases smoothly from 0 to
$1-\bss^{\iota}_{\nu}(\infty)$. The analogous result for $n_{\nu}^{-1}I^{\iota}_{n, \nu}$ follows due to the conservation law
\be{
  n_{\nu}^{-1}S^{\iota}_{n,\nu}+n_{\nu}^{-1}I^{\iota}_{n, \nu}+n_{\nu}^{-1}R^{\iota}_{n, \nu}=1.
}
Now we turn to the limiting susceptible population ratio curve 
\bes{
    \bss^{\iota}_{\nu}(u)=\IE\bclc{\exp\bclr{-\widehat{W}^{\nu}e^{\widehat{\C{M}}u}m_*}}.
}
Recall that $ \psi^{\nu}(\tau)=\IE\clc{e^{-\tau\widehat{W}^{\nu}}}$ and that
\ben{\label{psi}
    \psi^{\nu}(s)= \exp\bbbclr{-\sum_{i=1}^k\widehat\bR_{0,\nu,i}\int_0^{\infty}(1- \psi_{i}(se^{-\widehat{\C{M}}u}))G_{i,\nu}(du)}.
}
Now,
\besn{\label{ratioS}
    \bss^{\iota}_{\nu}(t)&= \psi^{\nu}(e^{\widehat{\C{M}}t}m_*)\\
     &= \exp\bbbclr{-\sum_{i=1}^k\widehat\bR_{0,\nu,j}\int_0^{\infty}(1- \psi_{i}(m_*e^{\widehat{\C{M}}(t-u)}))G_{i,\nu}(du)} \\
     &= \exp\bbbclr{-\sum_{i=1}^k\widehat\bR_{0,\nu,j}\int_0^{\infty}(1-\bss^{\iota}_{i}(t-u))G_{i,\nu}(du)}.
}
Note that the equation \eqref{psi} has many solutions, since, if $ \psi(t)$ is a solution, so is  $ \psi_h(t)=:\psi(ht)$, for any fixed $h>0$; this fact has already been observed by \cite[p.\,20]{barbour2013approximating}. The condition  $ \psi^{\nu}(0)=1$, or equivalently, $\bss^{\iota}_{\nu}(-\infty)=1$, is satisfied by all  $ \psi_h$. The relevant choice of unique solution to \eqref{psi} is determined by matching $\IE\widehat{W}^{\nu}$ with $-\frac{d}{dt}\psi^{\nu}(t)|_{t=0}$ by noting that $\bss^{\iota}_{\nu}(t)= \psi^{\nu}(e^{\widehat{\C{M}}t}m_*)$ and
\ben{\label{psiEW}
     \lim_{t\rightarrow-\infty}e^{-\widehat{\C{M}}t}\frac{d}{dt}\bss^{\iota}_{\nu}(t)=m_*\widehat{\C{M}}\lim_{t\rightarrow-\infty}\frac{d}{dt} \psi^{\nu}(t)|_{t=e^{\widehat{\C{M}}t}m_*}=-m_*\widehat{\C{M}}\IE\widehat{W}^{\nu}.
}
This concludes the proof of Theorem~\ref{IEmodels}.

\section{Proof of Theorem~\ref{contactempiricaltheorem}}\label{proof2.7}
The distinction between the epidemic processes in Section~\ref{general} and~\ref{contempirical} whether the contact process does or does not depend on the population size $n$.

Recall the definition of  $\xi_{n,Q_i,i,j}$ in \eqref{superposition1}. When $Q_{i}$ given, $\xi_{n,Q_i,i,j}$ becomes a vector of deterministically truncated point processes, which can be approximated by Poisson point processes in terms of total variation distance; see for example \cite[Chapter 1]{reiss2012course}. Specifically, given $Q_{i}=T$,
\be{ 
    \xi_{n,Q_i,i}(\cdot)|_{Q_{i}=T} =\bbbclr{\sum_{l=1}^{n_j}\delta_{X_{l,n,i,1}}(\cdot\cap[0,T]),\dots, \sum_{l=1}^{n_j}\delta_{X_{l,n,i,k}}(\cdot\cap[0,T])}.
}
Here, $\sum_{l=1}^{n_j}\delta_{X_{n,i,j,l}}(\cdot\cap[0,T])$ is a truncated empirical process, which has the same distribution as the binomial point process
\be{ 
  N_{n,T,i,j}=\sum_{l=1}^{\alpha_{n,T}}\delta_{Y_{l,n,T,i,j}},
}
where $\alpha_{n,T}$, $Y_{1,n,T,i,j}$, and $Y_{2,n,T,i,j}\dots$ are independent and with distributions $\law(Y_{l,n,T,i,j})=F_{X_{n,i,j}}|_T$, $l\in\IN$, and $\law (\alpha_{n.T})=\Bi\bbclr{n_j,F_{X_{n,i,j}}([0,T])}$, where
\be{
  F_{X_{n,i,j}}\big|_T=\frac{F_{X_{n,i,j}}(\cdot\cap[0,T])}{F_{X_{n,i,j}}([0,T])};
}
see \cite[Theorem 1.4.1]{reiss2012course}.
The binomial process $N_{n,T,i,j}$ can be approximated by a Poisson point process, denoted by $N^{*}_{n,T,i,j}$, with the same intensity measure, namely 
\be{
    \Lambda_{n,T,i,j}(A)=\IE\bbbclc{\sum_{l=1}^{n_j}\delta_{X_{n,i,j,l}}(A\cap[0,T])}
    =n_j\IP[X_{n,i,j}\in (A\cap[0,T])]
}
for all $A\in \F{B}(\IR_+)$. By Assumption~\ref{Lambda}, let 
\be{
  \Lambda_{T,i,j}(dt)=\lim_{n\rightarrow\infty}\Lambda_{n,T,i,j}(dt),
}
and let $N^{*}_{T,i,j}$ be a Poisson point process with intensity measure $\Lambda_{T,i,j}$ and $N^{*}_{T,i}=\bclr{N^{*}_{T,i,1},\dots,N^{*}_{T,i,k}}$. Now let $\C{B}^{\iota\prime}$ be a C-M-J branching process starting with an individual of type~$\iota$ and individuals of type~$i$ give births according to the vector of point processes $\xi_{Q_i,i}$. Under Assumption~\ref{empirical}(ii), the epidemic process $\widehat{\C{B}}^{\nu}_n$ can be coupled to the branching process $\widehat{\C{B}}^{\nu\prime}$ successfully before the random time $\tau_n$ with high probability. Denote this event by $A_{n,4}$; hence, $\IP(A_{n,4}^c)\to0$ as $n\to\infty$. For the susceptibility process, recall that the total variation distance between two Poisson processes is bounded by the total variation distance between their mean measures; see for example \cite[Theorem 2.4]{barbour1992stein}. Note that all the Poisson processes in the backward branching process are independent. Thus the branching process $\widehat{\C{B}}^{\nu\prime}_n$ can be coupled to the branching process $\widehat{\C{B}}^{\nu\prime}$ due to Assumption~\ref{empirical}, as long as the number of offspring in these two branching processes is at most of order $\bigo(n^{1/3})$. Specifically, using Markov's inequality, we can estimate the probability that the total number of individuals born in the backward branching process $\C{\widehat{B}}^{\nu'}$ up to time $t_n(u_n)=7/(24\C{\widehat{M}})\log n+u_n$ (where $u_n=1/(48\C{\widehat{M}})\log n$) exceeds $n^{1/3}$ as
\be{
    \IP\bbcls{|\widehat{B}^{\nu\prime}(t_n(u_n))| >n^{1/3}}\leq e_{\widehat{W}} n^{-1/48},
}
where $e_{\widehat{W}}=\sup_{t\geq0}\IE\bclc{|\widehat{B}^{\nu\prime}(t)|e^{-\widehat{\C{M}}t_n(u_n)}}<\infty$ due to Assumption~\ref{empirical}. Thus, the (backward) branching process $\widehat{\C{B}}^{\nu'}_n$ can be replaced by the branching process $\widehat{\C{B}}^{\nu'}$ until time $t_n(u)$, except on an event, denoted by $A_{n,5}$, of probability at most
\be{
  \bigo(n^{-1/48})+\bigo(n^{-1/2})+\bigo\bbbclr{n^{1/3}\max_{i\in[k]}\sum_{j\in[k]}\dtv\bbclr{G_{n,i,j}, G_{i,j}}},
}
where $\max_{i\in[k]}\sum_{j\in[k]}\dtv\bclr{G_{n,i,j},  G_{i,j}}=\lito(n^{-1/3})$ due to Assumption~\ref{empirical}(iii).  We consider the population size to be large enough such that
\be{
  u\in\bcls{-7/(24\widehat{\C{M}})\log n,1/(48\widehat{\C{M}})\log n}.
} 

Let $A_{n,6}=A_{n,3}\cap A_{n,4}\cap A_{n,3}$, where $A_{n,3}$ is defined in the same way as the one in Section~\ref{sectionsusbranching}, and let $t_n(u)=7/(24\widehat{\C{M}})\log n+u$ for any given $u\in\IR$. The statement for the susceptible ratio of Theorem~\ref{contactempiricaltheorem} can be verified through replacing $A_{n,3}$ by $A_{n,6}$ in Section~\ref{proof2.3} and the proof for the infected and recovery ratios are similar as in Section~\ref{proof3}.

\section{Proof of Theorem~\ref{dynamicmodel}}\label{proof2}

Recall the multi-type Markovian SIR process on a dynamic random graph, which we shall call ``Model 1''. In Model 1, the contact process between two individuals of type~$i$ and $j$ is realised through a homogeneous Poisson point process $\varsigma_{n,i,j}$, modulated by a 2-state Markov chain (states `0' and `1'). The intensity parameter of $\varsigma_{n,i,j}$ is denoted by $\beta_{n,i,j}$. The 2-state Markov chain transitions from state 0 to 1 at rate  $\lambda_{n,i,j}=\lambda_{n,j,i}$, and transitions from state~1 to~0 at rate $\mu_{n,i,j}=\mu_{n,j,i}$. State 1 indicates that the edge is on, meaning that the disease can spread through that edge, while state 0 indicates that it is off, and the disease cannot spread through that edge. The stationary distribution of the edge state is the Bernoulli distribution $\Be\bclr{\lambda_{n,i,j}/\clr{\lambda_{n,i,j}+\mu_{n,i,j}}}$. We assume that all Markov chains start in equilibrium and are independent of each other. We also assume that the Poisson point processes are independent and identically distributed copies of $\varsigma_n$ and are independent of the 2-state Markov chains.

If $x$ is infected and $y$ is susceptible, then an infection occurs successfully at the first time a point in the Poisson process $\varsigma_{n,x,i,j}$ occurs while the Markov chain is in state 1 before $x$ recovers. An infected individual of type~$i$ recovers at a rate of $\gamma_i $. The infectious period of an infected individual obeys an exponential distribution $\Exp(\gamma_i )$. Contacts between any two infected individuals do not affect the epidemic process.

A Poisson point process modulated by the above Markov chain is called an ``interrupted Poisson process'', which is also referred to as a ``doubly stochastic Poisson process'' (\cite[pp. 328]{resnick1992adventures}), or a special ``Markov-modulated Poisson process'' (\cite{fischer1993markov}).

Finally, we consider the following two modifications of Model 1, which will turn out to be equivalent when only studying the SIR epidemic in Model 1. 

\medskip

\noindent\textbf{Model 2.} 
We use the same setup as in Model 1, but with the key difference that when an infection occurs between two individuals of type~$i$ and $j$, the edge process between them is reset to its equilibrium state given by a Bernoulli distribution with parameter $\lambda_{n,i,j}/(\lambda_{n,i,j}+\mu_{n,i,j})$.

\medskip

\noindent\textbf{Model 3.} 
Start with one infectious individual of type~$\iota $, call it $x_0$, in a population of $n$ individuals separated into $k$ communities. Equip $x_0$ with the point process $\xi^{\iota}_{n,x_0}=\clr{\xi^{\iota}_{n,x_0, 1},\dots,\xi^{\iota}_{n,x_0,k}}$, which has the same distribution with $\xi^{\iota}_n=\clr{\xi^{\iota}_{n, 1},\dots,\xi^{\iota}_{n,k}}$ defined in \eqref{vary1}. Whenever there is a point on $\xi^{\iota}_{n,\iota,i}$, pick a random individual without replacement from the population of type 
$i$ and infect it (unless it already is infected, in which case do nothing). Continue this process until the initially infected individual of type~$\iota $ recovers, and then repeat the process with any newly infected individuals until there are no more infected individuals.

\medskip

\begin{lemma}
The numbers of infected individuals over time in Model 1, 2 and 3 follow the same distribution.
\end{lemma}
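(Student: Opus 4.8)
The plan is to realise all three models on a single probability space and to show that the infected-count process is one and the same deterministic functional of a common family of \emph{directed edge lengths} and recovery clocks, whose joint law is identical in the three models. For an ordered pair $(a,b)$ with $a$ of type $i$ and $b$ of type $j$, let $\ell_{a,b}$ denote the time elapsed from the infection of $a$ until $a$ first makes a contact to $b$ along an active (state~$1$) edge; in Model~1 this is the first point of $\varsigma_{n,a,i,j}$ that falls while the $\{a,b\}$ edge chain is in state~$1$. Together with the recovery times $Q_{a}\sim\Exp(\gamma_i)$, the lengths $\{\ell_{a,b}\}$ determine the entire epidemic by a first-passage (shortest-infection-path) recursion: $b$ is infected, and its infection time is determined, precisely when some chain $a_0=x_0,a_1,\dots,a_m=b$ satisfies, for each $r$, that $a_r$'s infection time plus $\ell_{a_r,a_{r+1}}$ does not exceed $a_r$'s recovery time. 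Since this functional is the same in all three models, it suffices to match the joint law of the \emph{queried} edge lengths.

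The key structural observation is that SIR dynamics are monotone: a vertex passes from susceptible to infected to recovered and never returns. Hence for each unordered pair $\{a,b\}$ at most one of the two directed lengths $\ell_{a,b},\ell_{b,a}$ is ever consulted by the recursion, namely the one whose tail is infected first; the direction $b\to a$ would require $a$ to be susceptible while $b$ is infected, which is impossible once $a$ has itself been infected. Consequently each edge's modulating Markov chain is interrogated over at most one time window, opening at the infection time of the first-infected endpoint. I would reveal the edge chains dynamically: the chain of $\{a,b\}$ is only inspected when this window opens. Because the chain of $\{a,b\}$ is independent of the contact processes and of every other edge, and because neither endpoint contacts the other while both are susceptible, the window-opening time and the opposite endpoint's susceptibility up to that time are measurable with respect to a $\sigma$-algebra independent of the $\{a,b\}$ chain. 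Stationarity of the chain then gives that, at the instant the window opens, the edge is in its $\Be(p_n(i,j))$ equilibrium and independent of the past, so the resulting length is distributed exactly as $X^{\mathrm{e}}_{n,i,j}$, independently across pairs.

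With this in hand the three equalities follow. For Model~1 versus Model~2, the reset prescribed in Model~2 changes the chain of $\{a,b\}$ only at the moment an infection passes through that edge; but immediately after such an event both endpoints are infected (hence never again susceptible), so the reset edge is never queried again and the infected-count process is unchanged; equivalently, the reset merely imposes by fiat the equilibrium-at-window-opening property that the previous paragraph shows already holds in Model~1. For Model~2 versus Model~3, the reset guarantees that each infector $a$ of type $i$ begins every outgoing edge to a type-$j$ target afresh in equilibrium, so the successful-contact times to its $n_j$ potential type-$j$ targets are i.i.d.\ copies of $X^{\mathrm{e}}_{n,i,j}$, truncated at the $\Exp(\gamma_i)$-distributed infectious period; this is exactly the empirical contact process $\xi_{n,i,j}=\sum_{l=1}^{n_j}\delta_{X^{\mathrm{e}}_{n,i,j,l}}$ of \eqref{vary1}--\eqref{vary2} attached to $a$ in Model~3, with contacts assigned to distinct uniformly random targets without replacement and re-contacts of already-infected targets having no effect. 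Matching the two descriptions of the same first-passage data yields equality in law of the whole infected-count process, and chaining the two identifications proves the lemma.

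The main obstacle is the independence claim in the second paragraph: one must argue rigorously that the history generated up to the opening of the window for a given pair $\{a,b\}$ carries no information about that pair's edge chain. The clean route is a deferred-decisions / exploration construction in which each edge chain is sampled only at the first moment it can influence the epidemic, combined with the stationarity and strong Markov property of the two-state chain; care is needed because edges are shared between the two directed orientations, which is exactly why the SIR monotonicity (at most one orientation is ever active) is essential to rule out any earlier inspection of the chain.
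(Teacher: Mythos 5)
Your proof is correct and follows essentially the same route as the paper's: the edge chain is unrevealed and hence in its $\Be(p_n(i,j))$ equilibrium at the moment the infector is infected, and once an infection has passed through an edge both endpoints are non-susceptible, so that edge (and hence the reset in Model 2) is irrelevant for the remainder of the epidemic. The paper's own proof is in fact far terser --- a few sentences asserting these two facts --- so your first-passage and deferred-decisions formalisation, including the monotonicity observation that at most one orientation of each pair is ever queried, merely supplies detail that the paper leaves implicit.
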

\begin{proof}
The law of the random variable $X^{\mathrm{e}}_{n,i,j}$ is the distribution of the time of infection when considering one infectious individual of type~$i$ (call it $x$) and another susceptible individual of type~$j$ (call it $y$): the infection only occurs when the edge between $x$ and $y$ is ``on'' (the Markov chain is in state 1). At the point when $x$ gets infected, the status of the edge between $x$ and $y$ is unknown, hence in equilibrium $\Be\bclr{ \lambda_{n,i,j}/(\lambda_{n,i,j}+\mu_{n,i,j})}$. Once $y$ is infected, the edge between $x$ and $y$ is known to be ``on'', but for the infectious process of $y$ we can assume it restarts in equilibrium because $y$ already is infected, so the edge between $x$ and $y$ is irrelevant for the rest of the epidemic.
\end{proof}

Thanks to this lemma, we can analyze the epidemic curves of Model 1 by examining those of Model 3 and applying the results of Theorem~\ref{contactempiricaltheorem} to it. In Model 3, the dynamic property of the random graph is encapsulated in the contact point process $\xi_{n,\cdot}$. The description of Model 3 is very similar to that of the general (Crump-Mode-Jagers) branching process, in which contacts are treated as births and an infected individual's recovery represents its death. Therefore, we can interpret Model 3 as the general epidemic model introduced in Section~\ref{general} and~\ref{contempirical}. The vector of point processes $\xi_{\cdot}$ of Model 3 corresponds to the vector of point processes $\xi_{\cdot}$ of the general epidemic model. Model 3 will serve as the foundation for our subsequent analyses.

In the following section, we provide a detailed analysis of Model 3 and establish the validity of Theorem~\ref{dynamicmodel}. Notably, the result of Theorem~\ref{dynamicmodel} for Model 3 can be directly derived from Theorem~\ref{contactempiricaltheorem}. Consequently, our focus will be on verifying that Model 3 satisfies the assumptions stipulated in Theorems~\ref{contactempiricaltheorem}, which will suffice to establish the validity of Theorem~\ref{dynamicmodel}.

\subsection{Preliminary results}\label{preliminaryresult}
Recall that, the Poisson point process $\varsigma_{n,i,j}$ (between $i$-individual to $j$-individual) modulated by a 2-state Markov chain is an interrupted Poisson process. The density function of the interarrival time $X^{\mathrm{i}}_{i,j}$ has  been calculated by \cite{kuczura1973interrupted} as
\be{
  f^{\mathrm{i}}_{i,j}(t)=p_{n,i,j}r_{1,n,i,j}e^{-r_{1,n,i,j}t}+(1-p_{n,i,j})r_{2,n,i,j}e^{-r_{2,n,i,j}t},
}
where
\ba{
    r_{1,n,i,j} &= \frac{1}{2}\bbbclc{\beta_{n,i,j}+\lambda_{n,i,j}+\mu_{n,i,j}+\sqrt{(\beta_{n,i,j}+\lambda_{n,i,j}+\mu_{n,i,j})^2-4\beta_{n,i,j}\lambda_{n,i,j}}},\\
    r_{2,n,i,j} &= \frac{1}{2}\bbbclc{\beta_{n,i,j}+\lambda_{n,i,j}+\mu_{n,i,j}-\sqrt{(\beta_{n,i,j}+\lambda_{n,i,j}+\mu_{n,i,j})^2-4\beta_{n,i,j}\lambda_{n,i,j}}},\\
    p_{n,i,j} &= \frac{\beta_{n,i,j}-r_{2,n,i,j}}{r_{1,n,i,j}-r_{2,n,i,j}}.
}
Moreover, 
\be{
   \IE X^{\mathrm{i}}_{i,j}=\frac{1}{\beta_{n,i,j}}\frac{\lambda_{n,i,j}+\mu_{n,i,j}}{\lambda_{n,i,j}}=\frac{p_{n,i,j}}{r_{1,n,i,j}}+\frac{1-p_{n,i,j}}{r_{2,n,i,j}}.
}
It is easy to check that
\bg{
    r_{1,n,i,j}r_{2,n,i,j}=\beta_{n,i,j}\lambda_{n,i,j},\\
    p_{n,i,j}r_{2,n,i,j}+(1-p_{n,i,j})r_{1,n,i,j}=\lambda_{n,i,j}+\mu_{n,i,j}.
}
The equilibrium excess lifetime distribution has probability density function
\besn{\label{Xe}
   f_{X^{\mathrm{e}}_{n,i,j}}(t) 
   &=\frac{\int_t^{\infty}f^{\mathrm{i}}_{i,j}(\tau)d\tau}{\IE X^{\mathrm{i}}_{i,j}}= \frac{p_{n,i,j}e^{-r_{1,n,i,j}t}+(1-p_{n,i,j})e^{-r_{2,n,i,j}t}}{\IE X^{\mathrm{i}}_{i,j}}\\
   &= \frac{p_{n,i,j}r_{1,n,i,j}r_{2,n,i,j}e^{-r_{1,n,i,j}t}+(1-p_{n,i,j})r_{1,n,i,j}r_{2,n,i,j}e^{-r_{2,n,i,j}t}}{p_{n,i,j}r_{2,n,i,j}+(1-p_{n,i,j})r_{1,n,i,j}}.
}
For any bounded interval $[s,t]\subset [0,+\infty)$, we have
\bes{
    \mel\IP\cls{X^{\mathrm{e}}_{n,i,j}\in[s,t]}
    =\int_s^tf_{X^{\mathrm{e}}_{n,i,j}}(\tau)d\tau\\
    &=\frac{p_{n,i,j}r_{2,n,i,j}(e^{-r_{1,n,i,j}s}-e^{-r_{1,n,i,j}t})+(1-p_{n,i,j})r_{1,n,i,j}(e^{-r_{2,n,i,j}s}-e^{-r_{2,n,i,j}t})}{p_{n,i,j}r_{2,n,i,j}+(1-p_{n,i,j})r_{1,n,i,j}}.
}
Recall from \eqref{superposition1} that
\bg{
    \xi_{n,i,j}=\sum_{l=1}^{n_j}\delta_{X^{\mathrm{e}}_{n,i,j,l}},\qquad
    \xi_{n,Q_i,i,j}([0,t]) =\xi_{n,i,j}([0,t\wedge Q_{i}]),\\
    \xi_{n,Q_i,i}([0,t]) =\bbclr{\xi_{n,Q_i,i,1}([0,t]),\dots,\xi_{n,Q_i,i,k}([0,t])}.
}
Hence,
\bes{
    \mel\IE\clc{\xi_{n,Q_i,i,j}([0,t])}=\IE\clc{\xi_{n,i,j}([0,t\wedge Q_{i}])}\\
    &=\sum_{k=0}^{n_j}\bbbclc{k\int_0^t\IP\cls{\xi_{n,i,j}[0,s]=k}f_{Q_{i}}(s)ds+k\IP\cls{\xi_{n,i,j}[0,t]=k}\IP\cls{Q_{i}\geq t}}\\
    &=\int_0^t\IE\clc{\xi_{n,i,j}[0,s]}f_{Q_{i}}(s)ds+\IE\clc{\xi_{n,i,j}[0,t]}\IP\cls{Q_{i}\geq t}\\
    &=\int_0^tn_j\cdot\IP\cls{X^{\mathrm{e}}_{n,i,j}\leq s}\gamma_i  e^{-\gamma_i  s}ds+n_j\cdot\IP\cls{X^{\mathrm{e}}_{n,i,j}\leq t}e^{-\gamma_i  t},
}
and
\besn{\label{expxi}
    \mel\IE\clc{\xi_{n,Q_i,i,j}(\IR_+)}=\int_0^{\infty}n_j\cdot\IP\cls{X^{\mathrm{e}}_{n,i,j}\leq s}\gamma_i  e^{-\gamma_i  s}ds\\
    &=n_j\bbbclr{1- \frac{p_{n,i,j}r_{2,n,i,j}\gamma_i (r_{2,n,i,j}+\gamma_i )+(1-p_{n,i,j})r_{1,n,i,j}\gamma_{i,j}(r_{1,n,i,j}+\gamma_i )}{(p_{n,i,j}r_{2,n,i,j}+(1-p_{n,i,j})r_{1,n,i,j})(r_{1,n,i,j}+\gamma_i )(r_{2,n,i,j}+\gamma_i )}}\\
    &=n_j\frac{\beta_{n,i,j}\lambda_{n,i,j}(\lambda_{n,i,j}+\mu_{n,i,j}+\gamma_i )}{(\lambda_{n,i,j}+\mu_{n,i,j})((\gamma_{i,j})^2+\gamma_i (\beta_{n,i,j}+\lambda_{n,i,j}+\mu_{n,i,j})+\beta_{n,i,j}\lambda_{n,i,j})}.
}
The relative intensity measure $G_{n,i,j}$ of $\xi_{n,Q_i,i,j}$ is
\bes{
    \mel G_{n,i,j}(dt) 
    = \IE\clc{\xi_{n,Q_i,i,j}(dt)}/\IE\clc{\xi_{n,Q_i,i,j}(\IR_+)}\\
    &=n_j\cdot f_{X^{\mathrm{e}}_{n,i,j}}(t) e^{-\gamma_i  t}dt\bigg/\bbbclr{\int_0^{\infty}n_j\cdot\IP\cls{X^{\mathrm{e}}_{n,i,j}\leq s}\gamma_i  e^{-\gamma_i  s}ds}\\
    &= \frac{n_j}{\bR_{0,n,i,j}}\cdot \frac{p_{n,i,j}r_{2,n,i,j}}{p_{n,i,j}r_{2,n,i,j}+(1-p_{n,i,j})r_{1,n,i,j}}r_{1,n,i,j}e^{-(r_{1,n,i,j}+\gamma_i ) t}dt\\
    &\quad+\frac{n_j}{\bR_{0,n,i,j}}\cdot \frac{(1-p_{n,i,j})r_{1,n,i,j}}{p_{n,i,j}r_{2,n,i,j}+(1-p_{n,i,j})r_{1,n,i,j}}r_{2,n,i,j}e^{-(r_{2,n,i,j}+\gamma_i ) t}dt.
}
Let $\Xi_{n,i,j}$ denote a Poisson point process with intensity measure $ p_i\bR_{0,n,i,j}G_{n,i,j}/p_j $ and $\Xi_{i,j}$ a Poisson point process with intensity measure $ p_i\bR_{0,i,j}G_{i,j}/p_j $. Then the total variation distance between $\law (\Xi_{n,i,j})$ and $\law (\Xi_{i,j})$, is bounded by the total variation distance between their mean measures; that is
\bes{
        \mel\dtv(\law (\Xi_{n,i,j}),\law (\Xi_{i,j}))\\
        &\leq \frac{p_i}{p_j}\| \bR_{0,n,i,j}G_{n,i,j},\bR_{0,i,j}G_{i,j} \|_{\mathrm{TV}}\\
        &\leq  \frac{p_i}{p_j}\sup_{f }\int_{0}^{\infty}|f(t)|\cdot|\bR_{0,n,i,j}G_{n,i,j}-\bR_{0,i,j}G_{i,j}|(dt)\\
        &\leq  \frac{p_i}{p_j}\int_{0}^{\infty}|\bR_{0,n,i,j}G_{n,i,j}-\bR_{0,i,j}G_{i,j}|(dt),
}
where the supremum is taken over f ranging over the set of all measurable functions from $\IR_+$ to $[-1,1]$; see for example \cite[Theorem 2.4]{barbour1992stein}. Denote the densities of $ \bR_{0,n,i,j}G_{n,i,j} $ and $\bR_{0,i,j}G_{i,j}$ by $g_{n,i,j}(t)$ and $g_{i,j}$ at $t\in\IR_{+}$ respectively, and
\bes{
        g_{n,i,j}(t)&=n_j\cdot f_{X^{\mathrm{e}}_{n,i,j}}(t) e^{-\gamma_i  t}\\
        &=n_j\frac{p_{n,i,j}r_{1,n,i,j}r_{2,n,i,j}e^{-r_{1,n,i,j}t}+(1-p_{n,i,j})r_{1,n,i,j}r_{2,n,i,j}e^{-r_{2,n,i,j}t}}{p_{n,i,j}r_{2,n,i,j}+(1-p_{n,i,j})r_{1,n,i,j}}e^{-\gamma_i t}\\
        &=n_j\frac{p_{n,i,j}\beta_{n,i,j}\lambda_{n,i,j}e^{-r_{1,n,i,j}t}  
        +(1-p_{n,i,j})\beta_{n,i,j}\lambda_{n,i,j}e^{-r_{2,n,i,j}t}}{\lambda_{n,i,j}+\mu_{n,i,j}}e^{-\gamma_i t}.
}

Recall the vector of exponentially truncated point processes, $\xi_{n,Q_i,i,j}$, defined in \eqref{superposition1}, when $Q_{i}$ is specified as a given time instant $T$, $\xi_{n,Q_i,i,j}$ is a vector of deterministically truncated point processes which can be approximated by Poisson point processes, $\xi_{T,i,j}$, in total variation distance as discussed in Section~\ref{proof2.7}. The corresponding intensity measure of the Poisson process is 
\bes{
    \Lambda_{n,T,i,j}(A)&=\IE\bbclc{\sum_{l=1}^{n_j}\delta_{X^{\mathrm{e}}_{n,i,j,l}}(A\cap[0,T])}\\
    &=n_j\IP[X^{\mathrm{e}}_{n,i,j}\in (A\cap[0,T])],\ \forall A\in \F{B}(\IR_+).
}
Specifically, 
\bes{
    \mel\IP[X^{\mathrm{e}}_{n,i,j}\in ([0,t]\cap[0,T])]\\
    &=\begin{cases}
     \displaystyle\frac{p_{n,i,j}r_{2,n,i,j}(1-e^{-r_{1,n,i,j}T})+(1-p_{n,i,j})r_{1,n,i,j}(1-e^{-r_{2,n,i,j}T})}{p_{n,i,j}r_{2,n,i,j}+(1-p_{n,i,j})r_{1,n,i,j}}&\text{if $t>T$,}\\[3ex]
    \displaystyle\frac{p_{n,i,j}r_{2,n,i,j}(1-e^{-r_{1,n,i,j}t})+(1-p_{n,i,j})r_{1,n,i,j}(1-e^{-r_{2,n,i,j}t})}{p_{n,i,j}r_{2,n,i,j}+(1-p_{n,i,j})r_{1,n,i,j}}&\text{if $t\leq T$.}
    \end{cases}
}
Thus
\bes{
    \mel \Lambda_{n,T,i,j}(dt)\\
    &=
    \begin{cases}
     0 & \text{if $t> T$,}\\[3ex]
    \displaystyle n_j\frac{p_{n,i,j}r_{2,n,i,j}r_{1,n,i,j}e^{-r_{1,n,i,j}t} +(1-p_{n,i,j})r_{1,n,i,j}r_{2,n,i,j}e^{-r_{2,n,i,j}t}}{p_{n,i,j}r_{2,n,i,j}+(1-p_{n,i,j})r_{1,n,i,j}}dt
    &\text{if $t\leq T$.}
    \end{cases}
}
Note that $g_{n,i,j}(t)dt=e^{-\gamma_i t}\Lambda_{n,T,i,j}(dt)$ for $t\in[0,T]$. Let 
\be{
  \Lambda_{T,i,j}(dt)=\lim_{n\rightarrow\infty}\Lambda_{n,T,i,j}(dt),
}
and let $\xi_{T,i,j}$ be a Poisson point process with intensity measure $\Lambda_{T,i,j}$ and $N^{*}_{T,i}=\bclr{N^{*}_{T,i,1},\dots,N^{*}_{T,i,k}}$. Through replacing the constant $T$ by the exponential distributed random variable $Q_{i}$, the process $N^{*}_{n,Q_{i},i,j}$ is a Cox process directed by $\Lambda_{n,Q^{(i)},i,j}$. We have
\be{
    \law (N^{*}_{n,Q_{i},i,j})=\int_0^{\infty} \law (\xi_{n,T})(\cdot)d\law (Q_{i})(T),
}
and thus the variation distance between $\xi_{n,Q_i,i}$ and $N^{*}_{n,Q_{i},i}=\bclr{N^{*}_{n,Q_{i},i,1},\dots,N^{*}_{n,Q_{i},i,k}}$ is 
\bes{
  \mel\dtv(\law (\xi_{n,Q_i,i}),\law (N^{*}_{n,Q_{i},i})\\
  &=\sup_{A\in\F{B}(\IR_{+}^k)}| \law (\xi_{n,Q_i,i})(A)-\law (N^{*}_{n,Q_{i},i})(A) |\\
  &\leq \sup_{A\in\F{B}(\IR_+^k)} \int_0^{\infty} \big| \law (\xi_{n,Q_i,i}|_{Q_{i}=T})(A)-\law (N^{*}_{n,T,i})(A) \big|d\law (Q_{i})(T)\\
  &\leq \int_0^{\infty} \dtv\bbclr{\law (\xi_{n,Q_i,i}|_{Q_{i}=T}),\law (N^{*}_{n,T,i})}d\law (Q_{i})(T)\\
  & \leq \int_0^{\infty} \sum_{j\in[k]}\dtv\bbclr{\law (\xi_{n,Q_i,i,j}|_{Q_{i}=T}),\law (N^{*}_{n,T,ij})}d\law (Q_{i})(T)\\
  & \leq \int_0^{\infty} \sum_{j\in[k]} F_{X^{\mathrm{e}}_{n,i,j}}([0,T])d\law (Q_{i})(T),
}
where the last inequality follows from \cite[Theorem 1.4.2]{reiss2012course}. We have 
\bes{
    \mel\int_0^{\infty}\sum_{j\in[k]}F_{X^{\mathrm{e}}_{n,i,j}}([0,T])d\law (Q_{i})(T),\\
    &=\sum_{j\in[k]}\int_0^{\infty}\frac{p_{n,i,j}r_{2,n,i,j}(1-e^{-r_{1,n,i,j}T})+(1-p_{n,i,j})r_{1,n,i,j}(1-e^{-r_{2,n,i,j}T})}{p_{n,i,j}r_{2,n,i,j}+(1-p_{n,i,j})r_{1,n,i,j}}\gamma_i e^{-\gamma_i T}dT\\
     &=\sum_{j\in[k]} \bbbclr{1 - \frac{p_{n,i,j}r_{2,n,i,j}}{p_{n,i,j}r_{2,n,i,j}+(1-p_{n,i,j})r_{1,n,i,j}}\frac{\gamma_i }{r_{1,n,i,j}+\gamma_i } \\
     &\kern14em- \frac{(1-p_{n,i,j})r_{1,n,i,j}}{p_{n,i,j}r_{2,n,i,j}+(1-p_{n,i,j})r_{1,n,i,j}}\frac{\gamma_i }{r_{2,n,i,j}+\gamma_i }}\\
     &=\sum_{j\in[k]} \bbbclr{1 - \frac{p_{n,i,j}r_{2,n,i,j}}{\lambda_{n,i,j}+\mu_{n,i,j}}\frac{\gamma_i }{r_{1,n,i,j}+\gamma_i }  - \frac{(1-p_{n,i,j})r_{1,n,i,j}}{\lambda_{n,i,j}+\mu_{n,i,j}}\frac{\gamma_i }{r_{2,n,i,j}+\gamma_i }} .
}
From the above arguments, we obtain
\besn{\label{TV4xin}
        \mel\dtv(\law (\xi_{n,Q_i,i}),\law (N^{*}_{Q_{i},i}))\\
        &\leq \int_0^{\infty} \sum_{j\in[k]} F_{X^{\mathrm{e}}_{n,i,j}}([0,T])d\law (Q_{i})(T)+\dtv(\law (N^{*}_{n,Q_{i},i},\law (N^{*}_{Q_{i},i})\\
        & \leq \int_0^{\infty} \sum_{j\in[k]}\bbbclr{F_{X^{\mathrm{e}}_{n,i,j}}([0,T]) + \dtv(\Lambda_{n,T,i,j},\Lambda_{T,i,j})}  d\law (Q_{i})(T).
}
Note that
\be{
        \IE\bbbclc{\bbclr{N_{Q_{i},i,j}^{*}(\IR_+ )}^2}
        =\int_0^{\infty} \IE\bbbclc{\bbclr{N_{T,i,j}^{*}(\IR_+)}^2} d\law (Q_{i})(T);
}
here $N_{T,i,j}^{*}(\IR_+)$ is a random variable with distribution $\Po(\Lambda_{T,i,j}(\IR_+))$, thus
\ben{\label{limitN}
        \IE{\bbclr{N_{Q_{i},i,j}^{*}(\IR_+ )}^2}=\int_0^{\infty} \bbclr{\Lambda_{T,i,j}(\IR_+) + (\bR_{0,i,j})^2} \gamma_i  e^{-\gamma_i T}dT.
}

\subsection{Proof of Lemma~\ref{Gxi} and verification of Assumption~\ref{Lambda} and Assumption~\ref{empirical}}\label{proofandverification}
The relevant parameters of Model 3 are $\lambda_{n,i,j}$, $\mu_{n,i,j}$, $\beta_{n,i,j}$ and $\gamma_i $ for $i, j\in[k]$. Note that the infectious period obeys the exponential distribution $\Exp(\gamma_i )$, so the equations \eqref{MarkovSIR1} and \eqref{MarkovSIR2} hold in Model 3. The other three parameters related to $n$ have the forms
\bes{
    \lambda_{n,i,j}=\lambda_{i,j} n_j^{\kappa_{\lambda_{i,j}}},
    \qquad 
    \mu_{n,i,j}=\mu_{i,j} n_j^{\kappa_{\mu_{i,j}}},
    \qquad
    \beta_{n,i,j}=\beta_{i,j} n_j^{\kappa_{\beta_{i,j}}},
}
where $\lambda_{i,j},\mu_{i,j},\beta_{i,j}\geq 0$ and where $\kappa_{\lambda_{i,j}},\kappa_{\mu_{i,j}}\in\IR$ and $\kappa_{\beta_{i,j}}\leq 0$; the latter is so that  the contact intensity of an infected individual is not too strong. We discuss the following cases:
\begin{AutoMultiColEnumerate}[label=$(\arabic*)$]
  \item $\kappa_{\lambda_{i,j}}> 0$ and $\kappa_{\mu_{i,j}}> 0$.
  \item $\kappa_{\lambda_{i,j}}> 0$ and $\kappa_{\mu_{i,j}}< 0$.
  \item $\kappa_{\lambda_{i,j}}> 0$ and $\kappa_{\mu_{i,j}}= 0$.
  \item $\kappa_{\lambda_{i,j}}< 0$ and $\kappa_{\mu_{i,j}}> 0$.
  \item $\kappa_{\lambda_{i,j}}< 0$ and $\kappa_{\mu_{i,j}}< 0$.
  \item $\kappa_{\lambda_{i,j}}< 0$ and $\kappa_{\mu_{i,j}}= 0$.
  \item $\kappa_{\lambda_{i,j}}= 0$ and $\kappa_{\mu_{i,j}}> 0$.
  \item $\kappa_{\lambda_{i,j}}= 0$ and $\kappa_{\mu_{i,j}}< 0$.
  \item $\kappa_{\lambda_{i,j}}= 0$ and $\kappa_{\mu_{i,j}}= 0$.
\end{AutoMultiColEnumerate}
\noindent In what follows, we will show that the vectors of point processes $\xi_{Q_i,i}$ mentioned in Lemma~\ref{Gxi} is $N^{*}_{Q_{i},i}$, the doubly stochastic process introduced in the last subsection. We will derive the total variation distances  $\dtv\bclr{\law \clr{\xi_{n,Q_i,i}},\law \clr{\xi_{Q_i,i}}}$ and  $\|\bR_{0,n,i,j}G_{n,i,j},\bR_{0,i,j}G_{i,j}\|_{\mathrm{TV}}$ for $i$, $j\in[k]$. The calculations are straightforward, but tedious, hence we only give a sketch of the arguments.

\subsubsection{Cases $(1)$, $(2)$, $(3)$, $(4)$, and $(7)$}
In these cases, at least one of $\lambda_{n,i,j}$ and $\mu_{n,i,j}$ tends to infinity as $n\rightarrow\infty$, leading to $r_{1,n,i,j}\rightarrow\infty$. Recalling relation \eqref{expxi}, we know that
\bes{
        \bR_{0,i,j} &=\lim_{n\rightarrow\infty} \IE\clc{\xi_{n,Q_i,i,j}(\IR_+)}\\
    &=\lim_{n\rightarrow\infty} \frac{n_j\beta_{n,i,j}\lambda_{n,i,j}}{(\gamma_i )^2+\gamma_i (\beta_{n,i,j}+\lambda_{n,i,j}+\mu_{n,i,j})+\beta_{n,i,j}\lambda_{n,i,j}}\bbbclr{1+\frac{\gamma_i }{\lambda_{n,i,j}+\mu_{n,i,j}}}\\
    &=\lim_{n\rightarrow\infty} \frac{n_jr_{1,n,i,j}r_{2,n,i,j}}{(r_{1,n,i,j}+\gamma_i )(r_{2,n,i,j}+\gamma_i )}\bbbclr{1+\frac{\gamma_i }{\lambda_{n,i,j}+\mu_{n,i,j}}}
        =\frac{1}{\gamma_i }\lim_{n\rightarrow\infty}n_jr_{2,n,i,j}.
}
To ensure that $\bR_{0,i,j}$ is positive and finite, $r_{2,n,i,j}$ needs to be of order $1/n_j$, which can be ensured by selecting appropriate parameters $\lambda_{i,j}$, $\mu_{i,j}$, $\beta_{i,j}$, $\gamma_i$, and $\kappa_{\lambda_{i,j}}$, $\kappa_{\mu_{i,j}}$, $\kappa_{\beta_{i,j}}$, and it can be shown that $p_{n,i,j}\rightarrow0$. Moreover, the measure $\Lambda_{T,i,j}$ is homogeneous, thus by \eqref{limitN}, we know that $\IE \bclr{N^{(i,j)*}_{Q_{i}}(\IR_+)}^2$ is finite. We now analyse the superposition process $\xi_{n,i,j}$ and the corresponding relative intensity measure. For $0\leq s<t<\infty$,
\bes{
    \mel\lim_{n\rightarrow\infty}n_j\IP\cls{X^{\mathrm{e}}_{n,i,j}\in[s,t]}\\
    &=\lim_{n\rightarrow\infty}n_j\frac{p_{n,i,j}r_{2,n,i,j}(e^{-r_{1,n,i,j}s}-e^{-r_{1,n,i,j}t})+(1-p_{n,i,j})r_{1,n,i,j}(e^{-r_{2,n,i,j}s}-e^{-r_{2,n,i,j}t})}{p_{n,i,j}r_{2,n,i,j}+(1-p_{n,i,j})r_{1,n,i,j}}\\
    &=\lim_{n\rightarrow\infty}n_jr_{2,n,i,j}(t-s)\\
    &=\bR_{0,i,j}\gamma_i (t-s).
}
Under the conditions such that the limit $\lim_{n\rightarrow\infty}n_jr_{2,n,i,j}$ is positive and finite, we know that the corresponding limit relative intensity measure $G_{i,j}$ is
\be{
  G_{i,j}(dt)=\gamma_i  e^{-\gamma_i  t}dt.
}
Let $d_{i,j}=\|p_i\bR_{0,n,i,j}G_{n,i,j}/p_j,p_i\bR_{0,i,j}G_{i,j}/p_j\|_{\mathrm{TV}}$, which has the same order as $\|\bR_{0,n,i,j}G_{n,i,j},\bR_{0,i,j}G_{i,j}\|_{\mathrm{TV}}$.
We summarise the total variation distance between $\bR_{0,n,i,j}G_{n,i,j}$  and $\bR_{0,i,j}G_{i,j}$ under different parameters as follows:
\ba{
     (1)\enskip & \text{If $\kappa_{\lambda_{i,j}}> 0$ and $\kappa_{\mu_{i,j}}> 0$,}\\
       &
    \begin{cases}
      (a) & \text{and if $\kappa_{\lambda_{i,j}}<\kappa_{\mu_{i,j}}$ and $1+\kappa_{\lambda_{i,j}}+\kappa_{\beta_{i,j}}-\kappa_{\mu_{i,j}}=0$,} \\
      & \text{then $d_{i,j}=\max\clc{\bigo(n^{\kappa_{\lambda_{i,j}}-\kappa_{\mu_{i,j}}}), \bigo(n^{-1})}$};\\
    (b) &\text{and if $ \kappa_{\lambda_{i,j}}=\kappa_{\mu_{i,j}}$ and $\kappa_{\beta_{i,j}}=-1$, then $d_{i,j}= \bigo(n^{-1})$;}\\
    (c) & \text{and if $\kappa_{\lambda_{i,j}}>\kappa_{\mu_{i,j}}$ and $\kappa_{\beta_{i,j}}=-1$,}\\
     & \text{then $d_{i,j}=\max\clc{\bigo(n^{-1}),\bigo(n^{\kappa_{\mu_{i,j}}-\kappa_{\lambda_{i,j}}})}.$}
    \end{cases}\\
     (2)\enskip & \text{If $\kappa_{\lambda_{i,j}}> 0$ and $\kappa_{\mu_{i,j}}< 0$ and $\kappa_{\beta_{i,j}}=-1$},\\
     &\text{then $d_{i,j}=\max\clc{\bigo(n^{-1}),\bigo(n^{\kappa_{\mu_{i,j}}-\kappa_{\lambda_{i,j}}})}$.}\\
     (3)\enskip &\text{If $\kappa_{\lambda_{i,j}}> 0$ and $\kappa_{\mu_{i,j}}= 0$ and $\kappa_{\beta_{i,j}}=-1$,}\\
     &\text{then $d_{i,j}=\max\bclc{\bigo(n^{-1}),\bigo(n^{-\kappa_{\lambda_{i,j}}})}$.}\\
     (4)\enskip &\text{If $\kappa_{\lambda_{i,j}}< 0$ and $\kappa_{\mu_{i,j}}> 0$ and $1+\kappa_{\lambda_{i,j}}+\kappa_{\beta_{i,j}}-\kappa_{\mu_{i,j}}=0$,}\\
     &\text{then $d_{i,j}=\max\clc{\bigo(n^{\kappa_{\beta_{i,j}}-\kappa_{\mu_{i,j}}}), \bigo(n^{-1}),  \bigo(n^{\kappa_{\lambda_{i,j}}-\kappa_{\mu_{i,j}}})}$.}\\
     (7)\enskip &\text{If $\kappa_{\lambda_{i,j}}= 0$ and $\kappa_{\mu_{i,j}}> 0$ and $1+\kappa_{\beta_{i,j}}=\kappa_{\mu_{i,j}}$,}\\
     &\text{then $d_{i,j}=\max\clc{\bigo(n^{-1}),  \bigo(n^{-\kappa_{\mu_{i,j}}})}$.}
}

\subsubsection{Case $(5)$}
In this case, both $\lambda_{n,i,j}$ and $\mu_{n,i,j}$ tend to 0. When $\kappa_{\beta_{i,j}}=0$, the limit~$\bR_{0,i,j}$ is either 0 or infinity. When $\kappa_{\beta_{i,j}}<0$, $\bR_{0,i,j}$ could be positive and finite for appropriate selections of $\lambda_{i,j}$, $\mu_{i,j}$, $\beta_{i,j}$, $\kappa_{\lambda_{i,j}}$, and $\kappa_{\mu_{i,j}}$ that make the limit relative intensity measure $G_{i,j}(dt)=\gamma_i  e^{-\gamma_i  t}dt$.  Moreover, the measure~$\Lambda_{T,i,j}$ is homogeneous, thus by \eqref{limitN} we know that $\lim_{n\rightarrow\infty}\IE\clc{( N^{(i,j)*}_{Q_{i}}(\IR_+))^2}$ is finite. The total variation distance between $\bR_{0,n,i,j}G_{n,i,j}$  and $\bR_{0,i,j}G_{i,j}$ is as follows:
\bes{
     (5)\enskip &\text{If $\kappa_{\lambda_{i,j}}< 0$ and $\kappa_{\mu_{i,j}}< 0$,}\\
       &\begin{cases}
      (a)& \text{and if $\kappa_{\lambda_{i,j}}=\kappa_{\mu_{i,j}}$ and $\kappa_{\beta_{i,j}}=-1$, then $d_{i,j}=\max\bclc{\bigo(n^{-1}), \bigo(n^{\kappa_{\lambda_{i,j}}})}$};\\
    (b)&\text{and if $\kappa_{\lambda_{i,j}}>\kappa_{\mu_{i,j}}$ and $\kappa_{\beta_{i,j}}=-1$,}\\
     &\text{then $d_{i,j}=\max\bclc{\bigo(n^{-1}), \bigo(n^{\kappa_{\lambda_{i,j}}})$, $\bigo(n^{\kappa_{\mu_{i,j}}-\kappa_{\lambda_{i,j}}})}$;}\\
    (c)&\text{and if $\kappa_{\lambda_{i,j}}<\kappa_{\mu_{i,j}}$ and $1+\kappa_{\beta_{i,j}}+\kappa_{\lambda_{i,j}}-\kappa_{\mu_{i,j}}=0$},\\
     &\text{then $d_{i,j}=\max\bclc{\bigo(n^{\kappa_{\mu_{i,j}}}),\bigo(n^{\kappa_{\beta_{i,j}}}),\bigo(n^{\kappa_{\lambda_{i,j}}-\kappa_{\mu_{i,j}}})}$}.
    \end{cases}
}

\subsubsection{Cases $(8)$ and $(9)$}
In these two cases, $\lambda_{n,i,j}=\lambda_{i,j}$ is a positive constant independent of $n$, and the limit expected number of offspring is positive and finite if $\kappa_{\beta_{i,j}}=-1$ and
\bes{
        \bR_{0,i,j} &=\lim_{n\rightarrow\infty} \frac{n_j\beta_{n,i,j}\lambda_{n,i,j}}{(\gamma_i )^2+\gamma_i (\beta_{n,i,j}+\lambda_{n,i,j}+\mu_{n,i,j})+\beta_{n,i,j}\lambda_{n,i,j}}\bbbclr{1+\frac{\gamma_i }{\lambda_{n,i,j}+\mu_{n,i,j}}}\\
          &=\begin{cases}
       \displaystyle\frac{\beta_{i,j}}{\gamma_i }&\text{if $\kappa_{\mu_{i,j}}<0$,}\\[3ex]
        \displaystyle\frac{\lambda_{i,j}\beta_{i,j}}{\lambda_{i,j}+\mu_{i,j}}\frac{1}{\gamma_i}
        &\text{if $\kappa_{\mu_{i,j}}=0$.}
    \end{cases}
}
For $0\leq s<t<\infty$,
\bes{
    \lim_{n\rightarrow\infty}n_j\IP\cls{X_e\in[s,t]}
    &=\bR_{0,i,j}\gamma_i (t-s).
}
Thus the limit relative intensity measure is $G_{i,j}(dt)=\gamma_i  e^{-\gamma_i  t}dt$. The measure $\Lambda_{T,i,j}$ is homogeneous, and thus by \eqref{limitN} we know that $\IE\clc{( N^{(i,j)*}_{Q_{i}}(\IR_+))^2}$ is finite.
The total variation distance between $\bR_{0,n,i,j}G_{n,i,j}$  and $\bR_{0,i,j}G_{i,j}$ is as follows:
\ba{
     (8)\enskip&\text{If $\kappa_{\lambda_{i,j}}= 0$ and $\kappa_{\mu_{i,j}}< 0$ and $\kappa_{\mu_{i,j}}=-1$,}\\
     &\text{then $d_{i,j}= \max\clc{\bigo(n^{-1}),\bigo(n^{\kappa_{\mu_{i,j}}})}$.}\\
     (9)\enskip&\text{If $\kappa_{\lambda_{i,j}}= 0$ and $\kappa_{\mu_{i,j}}= 0$ and $\kappa_{\beta_{i,j}}=-1$, then $d_{i,j}= \bigo(n^{-1})$.}
}

\subsubsection{Case $(6)$}
In this case, $\lambda_{n,i,j}\rightarrow0$ and $\mu_{n,i,j}=\mu_{i,j}$ is a constant independent of $n$. Note that
\bes{
    \bR_{0,i,j} 
    &=\lim_{n\rightarrow\infty} \IE\clc{\xi_{n,Q_i,i,j}(\IR_+)}\\
    &=\lim_{n\rightarrow\infty} \frac{n_j\beta_{n,i,j}\lambda_{n,i,j}}{ \gamma_i^2+\gamma_i (\beta_{n,i,j}+\lambda_{n,i,j}+\mu_{n,i,j})+\beta_{n,i,j}\lambda_{n,i,j}}\bbbclr{1+\frac{\gamma_i }{\lambda_{n,i,j}+\mu_{n,i,j}}}.
}
Now, $\bR_{0,i,j}$ is positive and finite if and only if $1+\kappa_{\lambda_{i,j}}+\kappa_{\beta_{i,j}}=0$.  When $\kappa_{\beta_{i,j}}<0$, the limit is
\be{
        \bR_{0,i,j}  =
        \begin{cases}
       \displaystyle\frac{\lambda_{i,j}\beta_{i,j}}{\mu_{i,j}\gamma_i }
       &\text{if $\kappa_{\beta_{i,j}}\neq-1$,}\\[3ex]
       \displaystyle\frac{\lambda_{i,j}\beta_{i,j}}{\lambda_{i,j}+\mu_{i,j}}\frac{1}{\gamma_i }
       &\text{if $\kappa_{\beta_{i,j}}=-1$.}
    \end{cases}
}
For $0\leq s<t<\infty$,
\be{
    \lim_{n\rightarrow\infty}n_j\IP\cls{X^{\mathrm{e}}_{n,i,j}\in[s,t]}
    =\bR_{0,i,j}\gamma_i (t-s).
}
Thus, $G_{i,j}(dt)=\gamma_i  e^{-\gamma_i  t}dt$. Note that $r_{2,n,i,j}$ tends to 0 as $n\rightarrow\infty$ with order of $n^{\kappa_{\lambda_{i,j}}+\kappa_{\beta_{i,j}}}=n^{-1}$, and the measure $\Lambda_{T,i,j}$ is homogeneous, thus by \eqref{limitN} we know that $\IE\clc{( N^{(i,j)*}_{Q_{i}}(\IR_+))^2}$ is finite. When $\kappa_{\beta_{i,j}}=0$, thus $\kappa_{\lambda_{i,j}}=-1$,
\bes{
        \bR_{0,i,j} &=\lim_{n\rightarrow\infty} \frac{n_j\beta_{n,i,j}\lambda_{n,i,j}}{(\gamma_i )^2+\gamma_i (\beta_{n,i,j}+\lambda_{n,i,j}+\mu_{n,i,j})+\beta_{n,i,j}\lambda_{n,i,j}}\bbbclr{1+\frac{\gamma_i }{\lambda_{n,i,j}+\mu_{n,i,j}}}\\
        &=\frac{\lambda_{i,j}\beta_{i,j}(\mu_{i,j}+\gamma_i )}{(\beta_{i,j}+\mu_{i,j}+\gamma_i )\mu_{i,j}\gamma_i }.
}
Moreover,
\bes{
    \mel\lim_{n\rightarrow\infty}n_j\IP\cls{X^{\mathrm{e}}_{n,i,j}\in[s,t]} \\
    &=\lim_{n\rightarrow\infty}n_j\frac{p_{n,i,j}r_{2,n,i,j}(e^{-r_{1,n,i,j}s}-e^{-r_{1,n,i,j}t})+(1-p_{n,i,j})r_{1,n,i,j}(e^{-r_{2,n,i,j}s}-e^{-r_{2,n,i,j}t})}{p_{n,i,j}r_{2,n,i,j}+(1-p_{n,i,j})r_{1,n,i,j}}\\
    &=\frac{\lambda_{i,j}(\beta_{i,j})^2}{\mu_{i,j}(\mu_{i,j}+\beta_{i,j})^2}\clr{e^{-(\beta_{i,j}+\mu_{i,j})s}-e^{-(\beta_{i,j}+\mu_{i,j})t}}+\frac{\lambda_{i,j}\beta_{i,j}}{\mu_{i,j}+\beta_{i,j}}(t-s),
}
as well as
\bes{
   \mel\lim_{n\rightarrow\infty} G_{n,i,j}(dt)\\
   &=\
   \lim_{n\rightarrow\infty} \frac{[p_{n,i,j}e^{-r_{1,n,i,j}t}+(1-p_{n,i,j})e^{-r_{2,n,i,j}t}](r_{1,n,i,j}+\gamma_i )(r_{2,n,i,j}+\gamma_i )}{p_{n,i,j}(r_{2,n,i,j}+\gamma_i )+(1-p_{n,i,j})(r_{1,n,i,j}+\gamma_i )} e^{-\gamma_i  t}dt\\
   &=\
        \frac{(\beta_{i,j} e^{-(\beta_{i,j}+\mu_{i,j})t}+\mu_{i,j})(\mu_{i,j}+\beta_{i,j}+\gamma_i )\gamma_i }{\beta_{i,j}\gamma_i +\mu_{i,j}(\mu_{i,j}+\beta_{i,j}+\gamma_i )} e^{-\gamma_i  t}dt.
}
Thus the contact processes in backward branching process are non-homogeneous Poisson process. Note that
\be{
  \Lambda_{T,i,j}(\IR_+)=\frac{(\beta_{i,j})^2\lambda_{i,j}\frac{1- e^{-(\mu_{i,j}+\beta_{i,j})T}}{\mu_{i,j}+\beta_{i,j}} +\mu_{i,j}\beta_{i,j}\lambda_{i,j}T}{(\mu_{i,j}+\beta_{i,j}) \mu_{i,j}} ,
}
thus by \eqref{limitN} we know that $\IE\clc{( N^{(i,j)*}_{Q_{i}}(\IR_+))^2}$ is finite. The total variation distance between $\bR_{0,n,i,j}G_{n,i,j}$  and $\bR_{0,i,j}G_{i,j}$ is as follows:
\bes{
     (6)\enskip&\text{If $\kappa_{\lambda_{i,j}}< 0$ and $\kappa_{\mu_{i,j}}= 0$,}\\
       &\begin{cases}
     (a)& \text{and if $\kappa_{\beta_{i,j}}<0$ and $\kappa_{\lambda_{i,j}}+\kappa_{\beta_{i,j}}=-1$,}\\
       &\text{then $d_{i,j}=\max\clc{\bigo(n^{\kappa_{\beta_{i,j}}}), \bigo( n^{\kappa_{\lambda_{i,j}}} ), \bigo(n^{-1})};$}\\
   (b)&\text{and if $\kappa_{\beta_{i,j}}=0$ and $\kappa_{\lambda_{i,j}}=-1$,}\\
     &\text{then $d_{i,j}= \bigo(n^{-1}).$}
    \end{cases}
}

\subsection{Additional constraints on parameters}

Based on the results of Sections~\ref{preliminaryresult} and \ref{proofandverification} (equation \eqref{TV4xin}, and the total variation distances $d_{i,j}$), it can be shown that the upper bound of the total variation distance between the distributions of $\xi_{n,Q_i,i}$ and $\xi_{Q_i,i}$ matches the upper bound of the aggregated total variation distances between $\bR_{0,n,i,j}G_{n,i,j}$ and $\bR_{0,i,j}G_{i,j}$ across $[k]$. While the calculations for each of the nine cases are straightforward, they are also quite tedious, hence we omit them. In summary, to ensure Assumption~\ref{empirical}(ii) and (iii), we need that
\be{
d_{i,j}=
\begin{cases}
\lito\bclr{n^{-1/3}} &\text{if $k=1$,}\\
\lito\bclr{n^{-17/24}} &\text{if $k>1$.}\\
\end{cases}
}
This leads to the additional parameter constraints in Tables~\ref{singletable} and~\ref{table:constraints}.

\subsection{Weak and strong forms of varying edges on epidemic curves}
Differentiating both sides of \eqref{ratioS}, we obtain
\bes{
    \frac{d}{dt}\bss^{\iota}_{ \nu}(t)&=\bss^{\iota}_{\nu}(t)\sum_{i\in[k]}\bbbclr{\frac{p_i}{p_{\nu}}\bR_{0,i,\nu}\int_0^{\infty}\frac{d}{ds}\bss^{\iota}_{i}(s)|_{s=t-u}g_{i,j}(u)du}\\
    &=\bss^{\iota}_{\nu}(t)\sum_{i\in[k]}\frac{p_i}{p_{\nu}}\bbbclr{\bR_{0,i,j}g_{i,\nu}(0)\bss^{\iota}_{i}(t)\\
    &\quad+ \bR_{0,i,\nu}\int_{-\infty}^{t}\bss^{\iota}_{i}(v)g_{i,\nu}(t-v)dv},
}
where we have used the fact that $g_{i,\nu}(\infty)=0$ and $\bss^{(i,\nu)}(-\infty)=1$ for $i,\nu\in[k]$. Thus the epidemic curves of Model 1 can be summarised by the integro-differential equations
\besn{\label{IDEModel3}
    \frac{d}{dt}\bss^{\iota}_{ \nu}(t)&=\bss^{\iota}_{\nu}(t)\sum_{i\in[k]}\frac{p_i}{p_{\nu}}\bbbclr{\bR_{0,i,\nu}g_{i,\nu}(0)\bss^{\iota}_{i}(t)\\
    &\quad+ \bR_{0,i,\nu}\int_{-\infty}^{t}\bss^{\iota}_{i}(v)g_{i,\nu}(t-v)dv},\\
    \frac{d}{dt}\bsi^{\iota}_{ \nu}(t)&=-\bss^{\iota}_{\nu}(t)\sum_{i\in[k]}\frac{p_i}{p_{\nu}}\bbbclr{\bR_{0,i,\nu}g_{i,\nu}(0)\bss^{\iota}_{i}(t)\\
    &\quad+ \bR_{0,i,\nu}\int_{-\infty}^{t}\bss^{\iota}_{i}(v)g_{i,\nu}(t-v)dv}-\gamma_{\nu} \bsi^{\iota}_{\nu}(t),\\
    \frac{d}{dt}\bsr^{\iota}_{ \nu}(t)&=\gamma_{\nu} \bsi^{\iota}_{\nu}(t),
}
with initial conditions $\bss^{\iota}_{\nu}(-\infty)=1$, $\bsi^{\iota}_{\nu}(-\infty)=0$, $\bsr^{\iota}_{\nu}(-\infty)=0$ and the constraint \eqref{psiEW}. 

The integro-differential equations in \eqref{IDEModel3} can be expressed in various forms. We will divide our discussion into three parts. First, we consider the case where all of the $\clc{G_{n,i,j}}_{i,j\in[k]}$ converge to homogeneous measures. Second, we examine the case where all of the $\clc{G_{n,i,j}}_{i,j\in[k]}$ converge to non-homogeneous measures. Finally, we consider mixed cases that combine the behaviors of the first two cases.

\subsubsection{Weak form: All of the relative intensity measures are homogeneous}
For the homogeneous cases, we have $G_{i,j}(dt)=\gamma_i  e^{-\gamma_i  t}dt$ and this is the general case we have discussed before. In these cases, the effect of the dynamic structure of the dynamic graph is ``weak'', reflecting only on the expected number $\bR_{0,i,j}$.
The epidemic curves are described by the differential equations
\ba{
     \frac{d}{dt}\bss^{\iota}_{ \nu}(t)&=-\bss^{\iota}_{\nu}(t)\bbbclr{\sum_{i\in[k]}\frac{p_i}{p_{\nu}}\bR_{0,i,\nu}\gamma_i \bsi_{i}(t)},\\
     \frac{d}{dt}\bsi^{\iota}_{ \nu}(t)&=\bss^{\iota}_{\nu}(t)\bbbclr{\sum_{i\in[k]}\frac{p_i}{p_{\nu}}\bR_{0,i,j}\gamma_i \bsi_{i}(t)}-\gamma \bsi^{\iota}_{\nu}(t),\\
     \frac{d}{dt}\bsr^{\iota}_{ \nu}(t)&=\gamma_{\nu} \bsi^{\iota}_{\nu}(t),
}
with appropriate initial conditions.

\subsubsection{Strong form: All of the limit relative intensity measures are non-homogeneous}\label{constraintonsimplecase}
We will now focus on a special case where $\kappa_{\mu_{i,j}}=\kappa_{\beta_{i,j}}=0$; that is, $\beta_{n,i,j}$ and $\mu_{n,i,j}$ are constants for all $i$, $j\in[k]$. In this case, by introducing some new variables, we can convert the integro-differential equations \eqref{IDEModel3} to a system of ordinary differential equations. This conversion makes the dynamic structure of the random graph more interpretable.

Note that the relative intensity measure can be rewritten as
\bes{
   \mel\lim_{n\rightarrow\infty}  G_{n,i,j}(dt)\\
     &=\frac{(\beta_{i,j} e^{-(\beta_{i,j}+\mu_{i,j})t}+\mu_{i,j})(\mu_{i,j}+\beta_{i,j}+\gamma_i )\gamma_i }{\beta_{i,j}\gamma_{i,j}+\mu_{i,j}(\mu_{i,j}+\beta_{i,j}+\gamma_i )} e^{-\gamma_i  t}dt\\
      &=\frac{1}{\bR_{0,i,j}}\frac{\lambda_{i,j}}{\mu_{i,j}}\beta_{i,j} e^{-(\mu_{i,j} +\beta_{i,j}+\gamma_i )t}dt\\
      &\quad+\frac{1}{\bR_{0,i,j}}\frac{\lambda_{i,j}}{\mu_{i,j}+\beta_{i,j}}\beta_{i,j}(1-e^{-(\mu_{i,j}+\beta_{i,j})t})e^{-\gamma_i  t}dt.
}
Recall that
\bes{
    \frac{d}{dt}\bss^{\iota}_{ j}(t)
    &=\bss^{\iota}_{j}(t)\sum_{i\in[k]}\frac{p_i}{p_j}\bbbcls{\int_0^{\infty}\bbbclr{\frac{d}{ds}\bss^{\iota}_{i}(s)|_{s=t-u}\frac{\lambda_{i,j}}{\mu_{i,j}}\beta_{i,j} e^{-(\mu_{i,j}+\beta_{i,j}+\gamma_i )u}\\
    &\quad+\frac{d}{dt}\bss^{\iota}_{i}(s)|_{s=t-u}\frac{\lambda_{i,j}}{\mu_{i,j}+\beta_{i,j}}\beta_{i,j}(1-e^{-(\mu_{i,j}+\beta_{i,j})u})e^{-\gamma_i  u}}du}.
}
Define a new variable $\bsl^{\iota}_{\mathrm{c},i,j}$, for every $t\in\IR$ let
\bes{
    \bsl^{\iota}_{\mathrm{c},i,j}(t)&=- \int_0^{\infty} \frac{d}{dt}\bss^{\iota}_{i}(s)|_{s=t-u}\frac{\lambda_{i,j}}{\mu_{i,j}} e^{-(\mu_{i,j}+\beta_{i,j}+\gamma_i   )u}du\\
    &=-  \frac{\lambda_{i,j}}{\mu_{i,j}}  \bss^{\iota}_{i}(t)+ (\mu_{i,j}+\beta_{i,j}+\gamma_i )\int_{0}^{\infty} \bss^{\iota}_{i}(t-u)\frac{\lambda_{i,j}}{\mu_{i,j}} e^{-(\mu_{i,j}+\beta_{i,j}+\gamma_i )u}du,
}
which satisfies
\bes{
 \frac{d}{dt}\bsl^{\iota}_{\mathrm{c}, i,j}(t) 
    &=-  \frac{\lambda_{i,j}}{\mu_{i,j}} \frac{d}{dt}\bss^{\iota}_{i}(t)+ (\mu_{i,j}+\beta_{i,j}+\gamma_i )\int_{0}^{\infty} \frac{d}{ds}\bss^{\iota}_{i}(s)|_{s=t-u}\frac{\lambda_{i,j}}{\mu_{i,j}} e^{-(\mu_{i,j}+\beta_{i,j}+\gamma_i )u}du \\
    &=- \frac{\lambda_{i,j}}{\mu_{i,j}} \frac{d}{dt}\bss^{\iota}_{i}(t)- (\mu_{i,j}+\beta_{i,j}+\gamma_i )\bsl^{\iota}_{\mathrm{c},i,j}(t).
}
Recall that
\bes{
    \bsi^{\iota}_{j}(t)&=-\int_0^{\infty}\bss^{\iota\prime}_{ j}(t-u)e^{-\gamma_i  u}du.
}
Let 
\bes{
\bsl^{\iota}_{\mathrm{d},i,j}(t)&=- \int_0^{\infty}\frac{d}{ds}\bss^{\iota}_{i}(s)|_{s=t-u} \frac{\lambda_{i,j}}{\mu_{i,j}+\beta_{i,j}}(1-e^{-(\mu_{i,j}+\beta_{i,j})u})e^{-\gamma_i  u}du\\
&=\frac{\lambda_{i,j}}{\mu_{i,j}+\beta_{i,j}}\bsi^{\iota}_{i}(t) - \frac{\mu_{i,j}}{\mu_{i,j}+\beta_{i,j}}\bsl^{\iota}_{\mathrm{c},i,j}(t),
}
which satisfies
\bes{
\bsl^{\iota\prime}_{\mathrm{d}, i,j}(t)&= \mu_{i,j} \bsl^{\iota}_{\mathrm{c},i,j}(t)- \gamma_i  \bsl^{\iota}_{\mathrm{d},i,j}(t).
}
So we know that the epidemic curves satisfy the following differential equations,
\besn{\label{SLLR}
        \frac{d}{dt}\bss^{\iota}_{ j}(t)&=-\bss^{\iota}_{j}(t)\sum_{i\in[k]}\frac{p_i}{p_j}\beta_{i,j}\clr{\bsl^{\iota}_{\mathrm{c},i,j}(t)+ \bsl^{\iota}_{\mathrm{d},i,j}(t)},\\
        \frac{d}{dt}\bsl^{\iota}_{\mathrm{c}, i,j}(t)&=\bss^{\iota}_{i}(t)\sum_{l\in[k]}\frac{p_l}{p_j}\frac{\lambda_{li}}{\mu_{li}}\beta_{li}\clr{\bsl^{\iota}_{\mathrm{c}, li}(t)+ \bsl^{\iota}_{\mathrm{d}, li}(t)}
        -(\mu_{i,j}+\beta_{i,j}+\gamma_i ) \bsl^{\iota}_{\mathrm{c},i,j}(t),\\
        \frac{d}{dt}\bsl^{\iota}_{\mathrm{d}, i,j}(t)&=\mu_{i,j} \bsl^{\iota}_{\mathrm{c},i,j}(t)-\gamma_i  \bsl^{\iota}_{\mathrm{d},i,j}(t),\\
        \frac{d}{dt}\bsi^{\iota}_{ j}(t)&=\bss^{\iota}_{j}\sum_{i\in[k]}\frac{p_i}{p_j}\beta_{i,j}\clr{\bsl^{\iota}_{\mathrm{c},i,j}(t)+ \bsl^{\iota}_{\mathrm{d},i,j}(t)} -\gamma_j  \bsi^{\iota}_{j}(t)\\
        \frac{d}{dt}\bsr^{\iota}_{ j}(t)&=\gamma_j  \bsi^{\iota}_{j}(t).
}
Recall that the expected number of $\xi_{Q_i,i,j}$ ($i$, $j\in[k]$) can be derived from the varying random graph with $n$ vertices through
\bes{
\bR_{0,i,j} &=\frac{\lambda_{i,j}\beta_{i,j}(\mu_{i,j}+\gamma_i )}{(\beta_{i,j}+\mu_{i,j}+\gamma_i )\mu_{i,j}\gamma_i }\\
&=\lim_{n\rightarrow\infty}\bbbclr{n_j\frac{\frac{\lambda_{i,j}}{n_j}}{\frac{\lambda_{i,j}}{n_j}+\mu_{i,j}}\frac{\beta_{i,j}}{\mu_{i,j}+\beta_{i,j}+\gamma_i }
+n_j\frac{\mu_{i,j}}{\frac{\lambda_{i,j}}{n_j}+\mu_{i,j}}\frac{\frac{\lambda_{i,j}}{n_j}}{\frac{\lambda_{i,j}}{n_j}+\gamma_i }\frac{\beta_{i,j}}{\mu_{i,j}+\beta_{i,j}+\gamma_i }}.
}
The second equality provides an interpretation of how to calculate the expected number. In the equilibrium state, a vertex $x$ of type~$i$ is expected to have, on average, $n_j\frac{\lambda_{i,j}/n_j}{\lambda_{i,j}/n_j+\mu_{i,j}}$ connected neighbours of type~$j$ and $n_j\frac{\mu_{i,j}}{\lambda_{i,j}/n_j+\mu_{i,j}}$ disconnected ``neighbours'' of type~$j$. Among the connected neighbours, a proportion of $\frac{\beta_{i,j}}{\mu_{i,j}+\beta_{i,j}+\gamma_j }$ is expected to be infected by vertex $x$. Among the disconnected neighbours, a proportion of $\frac{\lambda_{i,j}/n_j}{\lambda_{i,j}/n_j+\gamma_i }$ is expected to have an opportunity to be infected by $x$, with a probability of $\frac{\beta_{i,j}}{\mu_{i,j}+\beta_{i,j}+\gamma_i }$.

\subsubsection{Mixed case}
Now we consider the case in which some of $\clc{G_{i,j}}_{i,j\in[k]}$ are homogeneous and some of them are non-homogeneous.

For every $\nu\in[k]$, suppose that all selected parameters satisfy the assumptions of Theorem~\ref{maintheorem}. Let
\bes{
         K^{\nu}_{\mathrm{h}}&=\clc{i \in [k]:\text{$H_{i,\nu}$ is homogeneous}},\\
         K^{\nu}_{\mathrm{nh}}&=\clc{u\in [k]:\text{$H_{u,\nu}$ is non-homogeneous}}.
}
Then for every $j\in[k]$,
\bes{
    \frac{d}{dt}\bss^{\iota}_{ j}(t)&=\bss^{\iota}_{j}(t)\sum_{i\in[k]}\bbbclr{\frac{p_i}{p_j}\bR_{0,i,j}\int_0^{\infty}\frac{d}{ds}\bss^{\iota}_{i}(s)|_{s=t-u}g_{i,j}(u)du}\\
    &=-\bss^{\iota}_{j}(t)\sum_{i\in K^j_{\mathrm{h}}}\bbbclr{\frac{p_i}{p_j}\bR_{0,i,j}\gamma_i \bsi^{\iota}_{i}} \\
    &\quad-\bss^{\iota}_{j}(t)\sum_{u\in K^j_{\mathrm{nh}}}\frac{p_i}{p_j}\frac{\lambda_{li}}{\mu_{li}}\beta_{li}\clr{\bsl^{\iota}_{\mathrm{c}, uj}(t)+ \bsl^{\iota}_{\mathrm{d}, uj}(t)},
}
where for $u\in K^j_{\mathrm{nh}}$,
\bes{
        \frac{d}{dt}\bsl^{\iota}_{\mathrm{c}, uj}(t)
        &=\bss^{\iota}_{u}(t)\sum_{i\in K^u_\mathrm{h}}\bbbclr{\frac{p_i}{p_j}\bR_{0,i,u}\gamma_i \bsi^{\iota}_{i}}\\
        &\quad+\bss^{\iota}_{u}(t)\sum_{h\in K^u_\mathrm{nh}}\frac{p_h}{p_j}\frac{\lambda_{li}}{\mu_{li}}\beta_{li}\clr{\bsl^{\iota}_{\mathrm{c}, hj}(t)+ \bsl^{\iota}_{\mathrm{d}, hj}(t)}\\
        &\quad- (\mu_{uj}+\beta_{uj}+\gamma_u ) \bsl^{\iota}_{\mathrm{c}, uj}(t),\\
        \frac{d}{dt}\bsl^{\iota}_{\mathrm{d}, uj}(t)&=\mu_{uj} \bsl^{\iota}_{\mathrm{c}, uj}(t)-\gamma_u  \bsl^{\iota}_{\mathrm{d}, uj}(t),\\
}
Thus the epidemic curves of Model 1 satisfy
\bes{
          \frac{d}{dt}\bss^{\iota}_{ j}(t) &=-\bss^{\iota}_{j}(t)\sum_{i\in K^j_{\mathrm{h}}}\bbbclr{\frac{p_i}{p_j}\bR_{0,i,j}\gamma_i \bsi^{\iota}_{i}} \\
          &-\bss^{\iota}_{j}(t)\sum_{u\in K^j_{\mathrm{nh}}}\frac{p_u}{p_j}\beta_{uj}\clr{\bsl^{\iota}_{\mathrm{c}, uj}(t)+ \bsl^{\iota}_{\mathrm{d}, uj}(t)},\\
          \frac{d}{dt}\bsi^{\iota}_{ j}(t)&=\bss^{\iota}_{j}(t)\sum_{i\in K^j_{\mathrm{h}}}\bbbclr{\frac{p_i}{p_j}\bR_{0,i,j}\gamma_i \bsi^{\iota}_{i}} \\
          &\quad+\bss^{\iota}_{j}(t)\sum_{u\in K^j_{\mathrm{nh}}}\frac{p_u}{p_j}\beta^{\iota}_{uj}\clr{\bsl^{\iota}_{\mathrm{c}, uj}(t)+ \bsl^{\iota}_{\mathrm{d}, uj}(t)}-\gamma_j \bsi^{\iota}_{j},\\
          \frac{d}{dt}\bsl^{\iota}_{\mathrm{c}, uj}(t)&=\bss^{\iota}_{u}(t)\sum_{i\in K^u_\mathrm{h}}\bbbclr{\frac{p_i}{p_j}\bR_{0,i,u}\gamma_i \bsi^{\iota}_{i}} \\
          &\quad+\bss^{\iota}_{u}(t)\sum_{h\in K^u_\mathrm{nh}}\frac{p_h}{p_j}\frac{\lambda_{hu}}{\mu_{hu}}\beta_{hj}\clr{\bsl^{\iota}_{\mathrm{c}, hj}(t)+ \bsl^{\iota}_{\mathrm{d}, hj}(t)} \\
          &\quad- (\mu_{uj}+\beta_{uj}+\gamma_u ) \bsl^{\iota}_{\mathrm{c}, uj}(t),\\
        \frac{d}{dt}\bsl^{\iota}_{\mathrm{d}, uj}(t)&=\mu_{uj} \bsl^{(uj)}_{\mathrm{c}}(t)-\gamma_u  \bsl^{(uj)}_{\mathrm{d}}(t),\\
        \frac{d}{dt}\bsr^{\iota}_{ j}&=\gamma_j \bsi^{\iota}_{j}.
}

\section*{Acknowledgements}

This project was supported by the Singapore Ministry of Education Academic Research Fund Tier 2 grant MOE2018-T2-2-076.

\setlength{\bibsep}{0.5ex}
\def\bibfont{\small}

\bibliographystyle{natbibstyle}

\begin{sidewaystable}
\centering
\begin{tabular}{r@{\hspace{0.3em}}lr@{\hspace{0.3em}}lll} 
\toprule
\multicolumn{2}{c}{Parameter ranges} & \multicolumn{2}{c}{Parameter constraints} & $\gamma\bR_0$\\
\midrule
\multirow{3}{*}{(1)} & \multirow{3}{*}{$\kappa_{\lambda}> 0$, $\kappa_{\mu}> 0$} & (a) & $\kappa_{\lambda}-\kappa_{\mu}<-1/3$, $1+\kappa_{\lambda}+\kappa_{\beta}-\kappa_{\mu}=0$ & $\lambda\beta/\mu$\\
& & (b) & $\kappa_{\lambda}=\kappa_{\mu}$, $\kappa_{\beta}=-1$ & $\lambda\beta/(\lambda+\mu)$\\
& & (c) & $-1/3+\kappa_{\lambda}>\kappa_{\mu}$, $\kappa_{\beta}=-1$ & $\beta$\\
\midrule
(2) & $\kappa_{\lambda}> 0$, $\kappa_{\mu}< 0$ & & $\kappa_{\beta}=-1$, $\kappa_{\mu}-\kappa_{\lambda}<-1/3$ & $\beta$\\
\midrule
(3) & $\kappa_{\lambda}> 0$, $\kappa_{\mu}= 0$ & & $\kappa_{\beta}=-1$ & $\beta$\\
\midrule
(4) & $\kappa_{\lambda}< 0$, $\kappa_{\mu}> 0$ & & $\kappa_{\lambda}<-4/3$, $1+\kappa_{\lambda}+\kappa_{\beta}-\kappa_{\mu}=0$, $\kappa_{\beta}<-4/3$ & $\lambda\beta/\mu$\\
\midrule
\multirow{4}{*}{(5)} & \multirow{4}{*}{$\kappa_{\lambda}< 0$, $\kappa_{\mu}< 0$} & (a) & $ \kappa_{\lambda}=\kappa_{\mu}<-1/3$, $\kappa_{\beta}=-1$ & $\beta\lambda/(\lambda+\mu)$ \\
& & (b) & $-1/3>\kappa_{\lambda}>\kappa_{\mu}$, $\kappa_{\beta}=-1$, $-1/3>\kappa_{\mu}-\kappa_{\lambda}$ & $\beta\lambda/\lambda$\\
& & \multirow{2}{*}{(c)} & $0<\kappa_{\mu}-\kappa_{\lambda}<-1/3$, $1+\kappa_{\beta}+\kappa_{\lambda}-\kappa_{\mu}=0$ & \multirow{2}{*}{$\beta\lambda/\mu$} \\
& & & $\kappa_{\lambda}<-1/3$, $\kappa_{\beta}<-1/3$ & \\
\midrule
\multirow{2}{*}{(6)} & \multirow{2}{*}{$\kappa_{\lambda}< 0$, $\kappa_{\mu}= 0$} & (a) & $\kappa_{\beta}<-1/3$, $\kappa_{\lambda}+\kappa_{\beta}=-1$, $\kappa_{\lambda}<-1/3$ & $ \lambda\beta/\mu $\\
& & (b) & $\kappa_{\beta}=0$, $\kappa_{\lambda}=-1$ & $\lambda\beta(\mu+\gamma )/(\mu(\beta+\mu+\gamma ))$\\
\midrule
(7) & $\kappa_{\lambda}= 0$, $\kappa_{\mu}> 0$ &  &  $\kappa_{\lambda}= 0$, $\kappa_{\mu}> 1/3$, $1+\kappa_{\beta}=\kappa_{\mu}$  & $\lambda\beta/\mu$ \\
\midrule
(8) & $\kappa_{\lambda}=0$, $\kappa_{\mu}< 0$  & & $\kappa_{\lambda}= 0$, $\kappa_{\mu}<-1/3$, $\kappa_{\mu}=-1$  & $\beta$ \\
\midrule
(9) & $\kappa_{\lambda}=0$, $\kappa_{\mu}= 0$  & & $\kappa_{\lambda}= 0$, $\kappa_{\mu}= 0$, $\kappa_{\beta}=-1$  & $\beta\lambda/(\lambda+\mu)$ \\
\bottomrule
\end{tabular}
\caption{\label{singletable}The different limiting cases, their parameter ranges and constraints, as well as the respective reproduction numbers.}
\end{sidewaystable}

\begin{sidewaystable}
\centering
\begin{tabular}{r@{\hspace{0.3em}}lr@{\hspace{0.3em}}lll} 
\toprule
\multicolumn{2}{c}{Parameter ranges} & \multicolumn{2}{c}{Parameter constraints} & $\gamma_i\bR_{0,i,j}$\\
\midrule
\multirow{3}{*}{(1)} & \multirow{3}{*}{$\kappa_{\lambda_{i,j}}> 0$, $\kappa_{\mu_{i,j}}> 0$} & (a) & $\kappa_{\lambda_{i,j}}-\kappa_{\mu_{i,j}}<-17/24$, $1+\kappa_{\lambda_{i,j}}+\kappa_{\beta_{i,j}}-\kappa_{\mu_{i,j}}=0$ & $\lambda_{i,j}\beta_{i,j}/\mu_{i,j}$\\
& & (b) & $\kappa_{\lambda_{i,j}}=\kappa_{\mu_{i,j}}$, $\kappa_{\beta_{i,j}}=-1$ & $\lambda_{i,j}\beta_{i,j}/(\lambda_{i,j}+\mu_{i,j})$\\
& & (c) & $-17/24+\kappa_{\lambda_{i,j}}>\kappa_{\mu_{i,j}}$, $\kappa_{\beta_{i,j}}=-1$ & $\beta_{i,j}$\\
\midrule
(2) & $\kappa_{\lambda_{i,j}}> 0$, $\kappa_{\mu_{i,j}}< 0$ & & $\kappa_{\beta_{i,j}}=-1$, $\kappa_{\mu_{i,j}}-\kappa_{\lambda_{i,j}}<-17/24$ & $\beta_{i,j}$\\
\midrule
(3) & $\kappa_{\lambda_{i,j}}> 0$, $\kappa_{\mu_{i,j}}= 0$ & & $\kappa_{\beta_{i,j}}=-1$ & $\beta_{i,j}$\\
\midrule
(4) & $\kappa_{\lambda_{i,j}}< 0$, $\kappa_{\mu_{i,j}}> 0$ & & $\kappa_{\lambda_{i,j}}<-41/24$, $1+\kappa_{\lambda_{i,j}}+\kappa_{\beta_{i,j}}-\kappa_{\mu_{i,j}}=0$, $\kappa_{\beta_{i,j}}<-41/24$ & $\lambda_{i,j}\beta_{i,j}/\mu_{i,j}$\\
\midrule
\multirow{4}{*}{(5)} & \multirow{4}{*}{$\kappa_{\lambda_{i,j}}< 0$, $\kappa_{\mu_{i,j}}< 0$} & (a) & $ \kappa_{\lambda_{i,j}}=\kappa_{\mu_{i,j}}<-17/24$, $\kappa_{\beta_{i,j}}=-1$ & $\beta_{i,j}\lambda_{i,j}/(\lambda_{i,j}+\mu_{i,j})$ \\
& & (b) & $-17/24>\kappa_{\lambda_{i,j}}>\kappa_{\mu_{i,j}}$, $\kappa_{\beta_{i,j}}=-1$ & $\beta_{i,j}\lambda_{i,j}/\lambda_{i,j}$\\
& & \multirow{2}{*}{(c)} & $0<\kappa_{\mu_{i,j}}-\kappa_{\lambda_{i,j}}<-17/24$, $1+\kappa_{\beta_{i,j}}+\kappa_{\lambda_{i,j}}-\kappa_{\mu_{i,j}}=0$ & \multirow{2}{*}{$\beta_{i,j}\lambda_{i,j}/\mu_{i,j}$}\\
& & & $\kappa_{\lambda_{i,j}}<-17/24$, $\kappa_{\beta_{i,j}}<-17/24$ & \\
\midrule
\multirow{2}{*}{(6)} & \multirow{2}{*}{$\kappa_{\lambda_{i,j}}< 0$, $\kappa_{\mu_{i,j}}= 0$} & (a) & $\kappa_{\beta_{i,j}}<-17/24$, $\kappa_{\lambda_{i,j}}+\kappa_{\beta_{i,j}}=-1$, $\kappa_{\lambda_{i,j}}<-17/24$ & $ \lambda_{i,j}\beta_{i,j}/\mu_{i,j} $\\
& & (b) & $\kappa_{\beta_{i,j}}=0$, $\kappa_{\lambda_{i,j}}=-1$ & $\frac{\lambda_{i,j}\beta_{i,j}(\mu_{i,j}+\gamma_i )}{\mu_{i,j}(\beta_{i,j}+\mu_{i,j}+\gamma_i ) }$\\
\midrule
(7) & $\kappa_{\lambda_{i,j}}= 0$, $\kappa_{\mu_{i,j}}> 0$ & & $\kappa_{\lambda_{i,j}}= 0$, $\kappa_{\mu_{i,j}}> \frac{17}{24}$, $1+\kappa_{\beta_{i,j}}=\kappa_{\mu_{i,j}}$ & $\lambda_{i,j}\beta_{i,j}/\mu_{i,j}$\\
\midrule
(8) & $\kappa_{\lambda_{i,j}}= 0$, $\kappa_{\mu_{i,j}}< 0$ & & $\kappa_{\lambda_{i,j}}= 0$, $\kappa_{\mu_{i,j}}< -\frac{17}{24}$, $\kappa_{\mu_{i,j}}=-1$ & $\beta_{i,j}$\\
\midrule
(9) & $\kappa_{\lambda_{i,j}}= 0$, $\kappa_{\mu_{i,j}}= 0$ & & $\kappa_{\lambda_{i,j}}= 0$, $\kappa_{\mu_{i,j}}= 0$, $\kappa_{\beta_{i,j}}=-1$  & $\frac{\beta_{i,j}\lambda_{i,j}}{\lambda_{i,j}+\mu_{i,j}}$\\
\bottomrule
\end{tabular}
\caption{\label{table:constraints}The different limiting cases, their parameter ranges and constraints, as well as the respective by-type reproduction numbers.}
\end{sidewaystable}

\end{document}